\newcommand\restr[2]{\ensuremath{\left.#1\right|_{#2}}}
\newcommand{\G}{\mathcal{G}}
\newcommand{\Gt}{\mathcal{G}_{{\rm{tight}}}}
\newcommand{\mcS}{\mathcal{S}}
\newcommand{\mcH}{\mathrel{\mathcal{H}}}
\newcommand{\mcmu}{\mathrel{\mu}}
\newcommand{\mcJ}{\mathrel{\mathcal{J}}}
\newcommand{\mcrho}{\mathrel{\rho}}
\newcommand{\mcV}{\mathcal{V}}
\newcommand{\mcI}{\mathcal{I}}
\newcommand{\bbC}{\mathbb{C}}
\newcommand{\iso}{\operatorname{Iso}}
\newcommand{\fE}{\widehat{E}_0}
\newcommand{\tfE}{\widehat{E}_{{\rm{tight}}}}
\newcommand{\up}{\uparrow\!}
\newcommand{\down}{\downarrow\!}
\newcommand{\lr}{\mathrel{\leftrightarrow}}
\newcommand{\covby}{\longrightarrow}
\newcommand{\supp}{\operatorname{supp}}
\newcommand{\htt}{h_{{\rm{tight}}}}
\newtheorem{prop}{Proposition}[section]
\newtheorem{thm}[prop]{Theorem}
\newtheorem{cor}[prop]{Corollary}
\newtheorem{lem}[prop]{Lemma}
\theoremstyle{definition}
\newtheorem{defn}[prop]{Definition}
\newtheorem{exmp}[prop]{Example}
\newtheorem{rem}[prop]{Remark}
\newlist{thmenum}{enumerate}{10}
\setlist[thmenum,1]{label=\textnormal{(\alph*)}}
\setlist[thmenum,2]{label=\textnormal{(\roman*)}}
\begin{document}

\title[Condition (K) and Ideal Structure]{Condition (K) for Inverse Semigroups and the ideal structure of their $C^*$-algebras}
\author{Scott M. LaLonde \and David Milan}
\address{Department of Mathematics\\
The University of Texas at Tyler\\
3900 University Boulevard\\
Tyler, TX 75799}
\email{slalonde@uttyler.edu, dmilan@uttyler.edu}

\author{Jamie Scott}
\address{Department of Mathematics \\
1400 Stadium Road \\
University of Florida \\
Gainesville, FL 32611}
\email{hiwhoareyou1@ufl.edu}
\thanks{The third author was supported by an NSF grant (DMS-1359101).}

\date{\today}

\subjclass[2010]{20M18, 46L05, 22A22}

\maketitle

\begin{abstract}
	Inspired by results for graph $C^*$-algebras, we investigate connections between the ideal structure of an inverse semigroup $S$ and that of its tight $C^*$-algebra by
	relating ideals in $S$ to certain open invariant sets in the associated tight groupoid. We also develop analogues of Conditions (L) and (K) for inverse semigroups, which
	are related to certain congruences on $S$. We finish with applications to the inverse semigroups of self-similar graph actions and some relevant comments 
	on the authors' earlier uniqueness theorems for inverse semigroups.
\end{abstract}

\section{Introduction}

The theory of graph $C^*$-algebras has inspired a number of generalizations, and consequently it has provided motivation for a large body of research in $C^*$-algebras in 
recent years. In some ways this paper is a follow up to \cite{LaLondeMilan}, another paper inspired by graph $C^*$-algebras results, in which two uniqueness theorems for 
$C^*$-algebras of inverse semigroups were proved using the generalized uniqueness theorem for \'{e}tale groupoids from \cite{BNRSW}. If one is to follow the program laid out 
in the graph $C^*$-algebra literature, the natural next step is to describe the ideal structure of the tight $C^*$-algebra of an inverse semigroup. 

For directed graphs, there are two important conditions used to determine the ideal structure of the graph $C^*$-algebra. The first, Condition (L), says that every cycle in the 
graph has an entrance. This condition is equivalent to the property that the path groupoid of the graph is effective. The second, Condition (K), is that every vertex that is the 
base of a cycle has at least two distinct return paths. This condition is equivalent to the property that the reduction of the path groupoid to any closed invariant subset of its 
unit space is effective \cite[Corollary 6.5]{CE-MHS}. That is, Condition (K) is equivalent to the property that the path groupoid is \emph{strongly effective}.

In this paper, we develop the appropriate notions of Conditions (L) and (K) for inverse semigroups. For an inverse semigroup $S$, we will say that 
\[
	S \text{ satisfies Condition (L) if and only if } S/\!\!\lr \text{ is fundamental},
\]
where $\,\lr$ is a congruence that was introduced in \cite{lenz} that identifies any two idempotents that must be identified under any tight representation of $S$. Extending a characterization of Exel and Pardo 
\cite{ExelPardo}, we show under a suitable condition on $S$ that the tight groupoid of $S$ is effective if and only if $S/\!\!\lr$ is fundamental.

We will say that $S$ satisfies Condition (K) if $S/I$ satisfies Condition (L) for every saturated ideal $I$ of $S$. Fittingly, the inverse semigroup $\mcS_{\Lambda}$ of a directed 
graph $\Lambda$ satisfies Condition (K) if and only if $\Lambda$ satisfies Condition (K). Under a couple of additional assumptions on the semilattice of an inverse semigroup, 
we show that the tight groupoid of $S$ is strongly effective if and only if $S$ satisfies Condition (K).

After presenting the necessary background information in Section 2, we begin our investigation in Section 3 by defining the congruence $\leftrightarrow$ on $S$ and showing 
that $\Gt(S) \cong \Gt(S/\!\!\lr)$ when $S$ is Hausdorff. In Theorem \ref{thm:condL} we state some consequences of Condition (L) for an inverse semigroup.

In Section 4, we develop the connection between ideals in $S$ and closed invariant sets of the character spaces, $\fE$ and $\tfE$, which are the unit spaces of the universal 
groupoid $\G(S)$ and the tight groupoid $\Gt(S)$, respectively. We demonstrate a correspondence between invariant ideals in $E(S)$ and certain closed invariant subsets of 
$\fE$, which sends an ideal $X$ of $E(S)$ to the set $h(X)$ of all filters disjoint from $X$. Each ideal $I$ of $S$ induces an invariant ideal $X = I \cap E$ of $E(S)$, and we 
show in Theorem \ref{thm:universalreduction} that $\G(S/I)$ is isomorphic to the reduction $\G(S) \vert_{h(X)}$, and similarly $\G(I)$ is isomorphic to $\G(S) \vert_{h(X)^{c}}$. 
It is rarely the case that all closed invariant subsets of $\fE$ come from an ideal, but the situation is better in the case of the tight groupoid. There we obtain a bijective 
correspondence between saturated ideals in $S$ and closed invariant subsets of $\tfE$, provided we make two finiteness assumptions on the semilattice $E(S)$ (see 
Theorem \ref{thm:tightideals}).

Section 5 is devoted to the study of Condition (K) for inverse semigroups. In Theorem \ref{thm:condK}, we show that if $S$ is a Hausdorff inverse semigroup satisfying the 
aforementioned finiteness conditions, then $S$ satisfies Condition (K) if and only if $\Gt(S)$ is strongly effective. We can then describe the ideals in the Steinberg algebra 
and $C^*$-algebra of $\Gt(S)$ and state conditions for simplicity. Our definition of Condition (K) seems difficult to verify, since it involves every possible quotient of $S$ by a 
saturated ideal. With this in mind, we develop a sufficient condition---every congruence on $S$ is a Rees congruence. We characterize this condition in Theorem \ref{thm:Rees}.

In Section 6, we study this sufficient condition in greater detail for the inverse semigroup $\mcS_{G,\Lambda}$ of a self-similar graph action $(G,\Lambda,\varphi)$. We 
characterize the ideals of $\mcS_{G,\Lambda}$ in terms of hereditary, $G$-invariant sets of vertices. We prove that every quotient of $\mcS_{G,\Lambda}$ is itself the inverse 
semigroup of a self-similar graph action, and we characterize the property that all congruences on $\mcS_{G,\Lambda}$ are Rees.

Finally, we return to the uniqueness theorems of \cite{LaLondeMilan} in Section 7. It turns out that some of the results in this paper allow us to clarify the hypotheses used in those theorems. 

\section{Preliminaries}

This paper is concerned largely with the ideal structure of inverse semigroups. Recall that an \textit{inverse semigroup} is a semigroup $S$ such that for each $s$ in $S$, there 
exists a unique $s^*$ in $S$ such that 
\[
	s = s s^* s\quad \text{and}\quad s^* = s^* s s^*.
\]
We assume all inverse semigroups in this paper are countable. The elements $e \in S$ satisfying $e^2 = e$ (and hence $e^* = e$) are called \emph{idempotents}. The set of all 
idempotents in $S$ is denoted by $E(S)$.

There is a natural partial order on $S$ defined by $s \leq t$ if $s = te$ for some idempotent $e$. Given $s$ in $S$, it will often be useful to consider the sets
\[
	s^\down = \{ t \in S : t \leq s\}, \quad s^\up = \{ t \in S : t \geq s\}.
\] 
It is also worth noting that the subsemigroup $E(S)$ of idempotents is commutative, and hence forms a meet semilattice under the natural partial order with 
$e \wedge f= ef$ for $e,f$ in $E(S)$.

There are a number of useful equivalence relations defined on an inverse semigroup. The two that play a particularly important role here are the $\mcH$- and 
$\mu$-relations. The $\mcH$-relation is defined by $s \mcH t$ if and only if $s^* s = t^* t$ and $s s^* = t t^*$. The $\mcH$-class of an idempotent $e$, denoted $H_e$, 
is the maximum subgroup of $S$ with identity $e$. The $\mu$-relation is defined by $s \mcmu t$ if and only if $ses^* = tet^*$ for all $e \in E$. We denote the 
$\mu$-class of an idempotent $e$ by $Z_e$; it is also a group with identity $e$, hence a subgroup of $H_e$. In fact, it is well-known that $\mu \subseteq \mathcal{H}$. 
If $\mu = \mathcal{H}$, we say that $S$ is \emph{cryptic}.

An equivalence relation $\rho$ on $S$ is a {\it congruence} if $(a,b), (c,d) \in \rho$ implies $(ac, bd) \in \rho$. The quotient of an inverse semigroup by a congruence is 
again an inverse semigroup. The $\mu$-relation is always a congruence, though $\mcH$ need not be in general. A relation $\rho$ is \emph{idempotent separating} if 
each $\rho$-class contains at most one idempotent. Both $\mcH$ and $\mu$ are idempotent separating. Moreover, it is well known that $\mu$ is the maximal idempotent 
separating congruence. Notice then that $S$ is cryptic if and only if $\mcH$ is a congruence. 

A nonempty subset $I$ of $S$ is called a \emph{left ideal} if $SI \subseteq I$, a \emph{right ideal} if $IS \subseteq I$, and an \emph{ideal} if $SIS \subseteq I$. An inverse 
semigroup is \emph{simple} if it has no proper ideals. We say $S$ is \emph{$0$-simple} if it contains a 0, a nonzero element, and no proper nonzero ideals.

Given an ideal $I$ of $S$, there is a congruence $\rho_I$ on $S$ defined by $a \mathrel{\rho_I} b$ if $a,b \in I$ or $a = b$. We call $\rho_I$ the \emph{Rees congruence} of $I$. The quotient of $S$ by $\rho_I$ is denoted $S/I$ 
and is called the \emph{Rees quotient} of $S$ by $I$. Note that $S/I$ is simply the set $S \setminus I \cup \{0\}$ endowed with the product
\[
	s \cdot t = \begin{cases}
		st & \text{ if } st \in S \backslash I \\
		0 & \text{ otherwise}
	\end{cases}
\]
and inversion inherited from that of $S$.

\subsection{Groupoids} Recall that a \emph{groupoid} consists of a set $G$ together with a distinguished set $G^{(2)} \subseteq G \times G$ of composable pairs, and an associative map $(x, y) \mapsto xy$ from $G^{(2)} \to G$. There is also an involution $x \mapsto x^{-1}$ from $G \to G$ satisfying
\[
	x^{-1}(xy) = y, \quad (xy)y^{-1} = x
\]
whenever $(x, y) \in G^{(2)}$. The set
\[
	G^{(0)} = \bigl\{ u \in G : u^2 = u = u^{-1} \bigr\}
\]
is the \emph{unit space} of $G$, and its elements are called \emph{units}. The surjections $r, d : G \to G^{(0)}$ defined by
\[
	r(x) = xx^{-1}, \quad d(x) = x^{-1} x
\]
are called the \emph{range} and \emph{source} maps, respectively. We will assume that $G$ is equipped with a locally compact (but not necessarily Hausdorff)
topology making multiplication and inversion continuous. Note that $r$ and $d$ are then automatically continuous. Indeed, for all groupoids in this paper it will be
the case that $r, d : G \to G^{(0)}$ are local homeomorphisms, i.e., that $G$ is an \emph{\'{e}tale} groupoid.

Given $u \in G^{(0)}$, the set of elements $x \in G$ satisfying $r(x) = d(x) = u$ is called the \emph{isotropy group} at $u$, denoted by $G_u^u$. The \emph{isotropy bundle}
of $G$ is the set 
\[
	\iso(G) = \bigcup_{u \in G^{(0)}} G_u^u.
\]
If $\iso(G) = G^{(0)}$, then $G$ is said to be \emph{principal}. We say that an \'{e}tale groupoid $G$ is \emph{effective} if $\iso(G)^\circ = G^{(0)}$. If $G$ is Hausdorff and
second countable, this condition is equivalent to the stipulation that $G$ is \emph{topologically principal}---the set of units satisfying $G_u^u = \{u\}$ is dense in $G^{(0)}$.

A set $A \subseteq G^{(0)}$ is \emph{invariant} if $d(x) \in A$ implies $r(x) \in A$ for all $x \in G$. If $A$ is an invariant set of units, the set
\[
	G \vert_A = r^{-1}(A) = d^{-1}(A)
\]
is a subgroupoid of $G$, called the \emph{reduction} to $A$. An \'{e}tale groupoid is said to be \emph{strongly effective} if $G \vert_A$ is effective for every closed invariant 
set $A \subseteq G^{(0)}$.

\begin{exmp}
	The only groupoids we will consider in this paper are the ones built from an inverse semigroup $S$. Recall that a \emph{filter} in $E = E(S)$ is a nonempty set 
	$F \subseteq E$ that is closed under multiplication and closed upward in the partial order on $E$. We let $\fE$ denote the set of all filters in $E$ not containing 0.
	Given $e, e_1, \ldots, e_n \in E$, we define
	\begin{equation}
	\label{eqn:nes}
		N(e; e_1, \ldots, e_n) = \bigl\{ F \in \fE : e \in F, \, e_1, \ldots, e_n \not\in F \bigr\}.
	\end{equation}
	For each $e \in E$, it will be helpful if we let
	\[
		N^e = \bigl\{ F \in \fE : e \in F \bigr\}, \quad N_e = \bigl\{ F \in \fE : e \not\in F \bigr\}.
	\]
	The sets defined in \eqref{eqn:nes} form a base of compact open sets for a locally compact, Hausdorff topology on $\fE$.
	
	There is a natural action of $S$ on its filter space $\fE$, which is defined in terms of the maps $\beta_s : N^{s^*s} \to N^{ss^*}$ given by
	\[
		\beta_s(F) = (sFs^*)^\up.
	\]
	Now we define an equivalence relation on the set
	\[
		S * \fE = \bigl\{ (s, F) \in S \times \fE : s^*s \in F \bigr\}
	\]
	by declaring $(s, F) \sim (t, F')$ if and only if $F = F'$ and there exists $e \in F$ such that $se = te$. We denote the equivalence class of $(s, F)$ by $[s, F]$, and 
	we define $\G(S) = (S*\fE)/\sim$. This set is a groupoid with respect to the operations
	\[
		[t, \beta_s(F)][s, F] = [ts, F], \quad [s, F]^{-1} = [s^*, \beta_s(F)].
	\]
	The range and source maps are given by
	\[
		r([s, F]) = [ss^*, \beta_s(F)], \quad d([s, F]) = [s^*s, F],
	\]
	so there is a natural identification of $\G(S)^{(0)}$ with $\fE$. Sets of the form
	\[
		\Theta(s, U) = \bigl\{ [s, F] \in \G(S) : F \in U \bigr\},
	\]
	where $s \in S$ and $U \subseteq \fE$ is open, form a base for a topology that makes $\G(S)$ into an \'{e}tale (but not necessarily Hausdorff) groupoid. We call 
	$\G(S)$ the \emph{universal groupoid} of $S$.
	
	There is another groupoid associated to $S$ which was introduced by Exel in \cite{ExelTight} and aligns more closely with the structure of $S$ in certain respects. We let $\tfE$ denote the closure of the set of ultrafilters in $\fE$, which is called the \emph{tight filter space} of $S$. For notational clarity, we denote these relative basic open sets in $\tfE$ by
	\[ 
		D(e; e_1,e_2, \dots, e_n) = N(e; e_1,e_2, \dots, e_n) \cap \tfE
	\]
	for $e, e_1, \ldots, e_n \in E$. Similarly, for each $e \in E$ we define
	\[
		D^{e} = N^{e} \cap \tfE, \quad D_{e} = N_{e} \cap \tfE
	\]
	Note that $\tfE$ is a closed, invariant subset of $\fE = \G(S)^{(0)}$. The reduction of $\G(S)$ to the tight filter space is called the \emph{tight groupoid} of $S$, 
	denoted by $\Gt(S)$.
\end{exmp}

\subsection{Directed graphs}

The primary motivating example of an inverse semigroup in this paper is the inverse semigroup $\mcS_{\Lambda}$ of a directed graph $\Lambda$. Briefly, a directed graph $\Lambda = (\Lambda^0, \Lambda^1, r, s)$ consists of countable sets $\Lambda^0$, $\Lambda^1$ and functions $r,s : \Lambda^1 \to \Lambda^0$ called the \emph{range} and \emph{source} maps, respectively. The elements of $\Lambda^0$ are called \emph{vertices}, and the elements of $\Lambda^1$ are called \emph{edges}. Given an edge $e$, $r_{e}$ denotes the range vertex of $e$ and $s_{e}$ denotes the source vertex. We denote by $\Lambda^*$ the collection of finite directed paths in $\Lambda$. The range and source maps $r,s$ can be extended to $\Lambda^*$ by defining $r_{\alpha} = r_{\alpha_n}$ and $s_{\alpha} = s_{\alpha_1}$ for a path $\alpha = \alpha_n \alpha_{n-1} \cdots \alpha_1$ in $\Lambda^*$. If $\alpha = \alpha_n \alpha_{n-1} \cdots \alpha_1$ and $\beta = \beta_m \beta_{m-1} \cdots \beta_1$ are paths with $s_{\alpha} = r_{\beta}$, we write $\alpha \beta$ for the path $\alpha_n \cdots \alpha_1 \beta_m \cdots \beta_1$. 

The \textit{graph inverse semigroup} of the directed graph $\Lambda$ is the set
\[ \mcS_{\Lambda} = \{(\alpha,\beta) : s_{\alpha} = s_{\beta} \} \cup \{ 0 \} \]
with the products not involving zero defined by
\[ (\alpha,\beta) (\gamma,\nu) = \left\{\begin{array}{ll}
        (\alpha \gamma', \nu) & \mbox{if $\gamma = \beta \gamma'$} \\
        (\alpha,\nu\beta') & \mbox{if $\beta = \gamma \beta'$} \\
        0 & \mbox{otherwise}
   \end{array} \right. \]
The inverse is given by $(\alpha,\beta)^{*} = (\beta,\alpha)$. It is worth noting that if $\mcS_{\Lambda}$ is the inverse semigroup of a directed graph $\Lambda$, then the tight 
groupoid of $\mcS_{\Lambda}$ is isomorphic to the usual path groupoid of $\Lambda$.

\section{Condition (L) for Inverse Semigroups}

A crucial result in the study of graph $C^*$-algebras is the Cuntz-Krieger uniqueness theorem, which says that, under certain conditions, a homomorphism out of a graph 
$C^*$-algebra is injective if and only if it does not vanish on any vertex projection. The key hypothesis is that every cycle in the graph has an entrance, which is known in
the literature as Condition (L). 

It turns out that a graph satisfies Condition (L) if and only if its path groupoid is effective \cite[Lemma 3.4]{KPR}. With this result in mind, the appropriate analogue of 
Condition (L) for an inverse semigroup $S$ ought to be an algebraic condition that is equivalent to $\Gt(S)$ being effective. There is already a result in this direction, but it 
requires an additional assumption on the inverse semigroup.

\begin{defn}
	Let $E$ be a meet semilattice. We say that $E$ is \emph{0-disjunctive} if given $e, f \in E$ with $0 < e < f$, there exists $0 < e' < f$ such that $ee' = 0$.
\end{defn}

In \cite[Corollary 5.5]{LaLondeMilan}, it is shown that if $S$ is a fundamental inverse semigroup with a $0$-disjunctive semilattice of idempotents, then the interior of  
$\iso (\Gt(S))$ is just $\Gt(S)^{(0)}$. In other words, $\Gt(S)$ is effective. That result seems to be of little relevance in determining the appropriate analog of Condition (L) for 
all inverse semigroups, since many important examples have a semilattice that fails to be $0$-disjunctive. For example, the inverse semigroup $\mcS_\Lambda$ of a directed 
graph $\Lambda$ has a $0$-disjunctive semilattice if and only if no vertex has in-degree 1. 

However, in this section we define a congruence $\leftrightarrow$ on an inverse semigroup $S$ that is directly related to the $0$-disjunctive condition on $E(S)$. We show 
that $S / \!\!\lr$ has a $0$-disjunctive semilattice and, assuming $S$ is Hausdorff, that $\Gt(S) \cong \Gt(S / \!\!\lr)$. Therefore, for Hausdorff $S$, $\Gt(S)$ is effective if and only if 
$S / \!\!\lr$ is fundamental. Thus we believe it is appropriate to say that $S$ satisfies Condition (L) if and only if $S / \!\!\lr$ is fundamental.

The congruence $\leftrightarrow$ that we are about to define was introduced by Lenz in \cite{lenz}, and has also been studied in \cite{LawsonGraph} and \cite{lawson-lenz}. In \cite{lawson-lenz} the notion of \emph{coverages} is used to produce the desired congruence, while \cite{LawsonGraph} requires one to work with inverse semigroups in which any two elements have 
a meet. In addition, the groupoids in \cite{lawson-lenz} are defined differently from the ones used by $C^*$-algebraists. Therefore, we will develop $\leftrightarrow$ in full detail.

\begin{defn}
	Let $S$ be an inverse semigroup with 0. Given $a, b \in S$, we define $a \rightarrow b$ if whenever $0 < x \leq a$, ${x^\downarrow} \cap {b^\downarrow} \neq 0$. We write 
	$a \leftrightarrow b$ if $a \rightarrow b$ and $b \rightarrow a$.
\end{defn}

Note that our definition of $a \rightarrow b$ says precisely that $a \rightarrow \{b\}$, in the sense of the relation described in \cite[Example 4.1(4)]{lawson-lenz}.

\begin{rem}
	It is worthwhile to discuss how our relation $\leftrightarrow$ is connected to the one induced by the tight coverage $\mathcal{T}$ in \cite{lawson-lenz}. Let $\equiv_\mathcal{T}$ 
	denote the Lawson-Lenz relation, which is defined by $a \equiv_{\mathcal{T}} b$ if $\mathcal{T}(a) \cap \mathcal{T}(b) \neq \emptyset$. Here
	\[
		\mathcal{T}(a) = \{ A \subseteq {a^\downarrow} : A \text{ is finite and } a \rightarrow A\},
	\] 
	where $a \rightarrow A$ if given any $0 < x \leq a$, there exists $b \in A$ with ${x^\downarrow} \cap {b^\downarrow} \neq 0$. 
	
	We claim that the relation $\equiv_\mathcal{T}$ is always contained in $\leftrightarrow$. Suppose $a \equiv_\mathcal{T} b$. Then there exists a nonempty, finite set 
	$X \subseteq \mathcal{T}(a) \cap \mathcal{T}(b)$. In particular, this means that $X \subseteq {a^\downarrow}$ and $a \rightarrow X$, and similarly 
	$X \subseteq {b^\downarrow}$ and $b \rightarrow X$. Therefore, given $0 < x \leq a$, there exists $y \in X$ such that ${x^\downarrow} \cap {y^\downarrow} \neq 0$. 
	But ${y^\downarrow} \subseteq {b^\downarrow}$, so it follows that ${x^\downarrow} \cap {b^\downarrow} \neq 0$. Thus $a \rightarrow b$. A similar argument shows 
	that $b \rightarrow a$, so $a \leftrightarrow b$.
	
	On the other hand, it is not clear whether $a \leftrightarrow b$ necessarily implies $a \equiv_{\mathcal{T}} b$. However, we can prove it when $S$ is Hausdorff. 
	Suppose $a \leftrightarrow b$. Then ${a^\downarrow} \cap {b^\downarrow}$ is finitely generated, say by $Z = \{z_1, z_2, \ldots, z_n\}$. Given $0 < x \leq a$, we have 
	${x^\downarrow} \cap {b^\downarrow} \neq 0$. That is, there exists $y \neq 0$ such that
	\[
		y \in {x^\downarrow} \cap {b^\downarrow} \subseteq {a^\downarrow} \cap {b^\downarrow},
	\]
	so $y \leq z_i$ for some $i$. Thus ${x^\downarrow} \cap {z_i^\downarrow} \neq 0$, which shows that $a \rightarrow Z$. Similarly, $b \rightarrow Z$. Therefore, 
	$Z \in \mathcal{T}(a) \cap \mathcal{T}(b)$, so $a \equiv_{\mathcal{T}} b$.
\end{rem}

The proof of the next proposition is (unsurprisingly) very similar to the verification that $\mathcal{D}$ satisfies the axioms of a coverage in \cite[Example 4.1(4)]{lawson-lenz}.

\begin{prop}
	The relation $\lr$ defines a 0-restricted congruence on $S$.
\end{prop}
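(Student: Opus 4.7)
To establish that $\lr$ is a 0-restricted congruence, I need to verify that it is an equivalence relation, that it respects multiplication, and that the $\lr$-class of $0$ reduces to $\{0\}$. Reflexivity, symmetry, and 0-restrictedness are straightforward. Reflexivity is witnessed by $x$ itself for any $0 < x \leq a$. 0-restrictedness follows because if $a \neq 0$, then taking $x = a$ exhibits the failure of $a \to 0$, since no nonzero element lies below $0$. For transitivity, given $a \to b$ and $b \to c$ together with $0 < x \leq a$, I first apply $a \to b$ to $x$ to produce $0 < y \leq x$ with $y \leq b$, then apply $b \to c$ to $y$ (which is $\leq b$) to produce $0 < z \leq y$ with $z \leq c$; transitivity of $\leq$ then gives $z \leq x$, so $a \to c$.

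The substantive step is compatibility with multiplication. By symmetry it suffices to prove right compatibility: if $a \to b$, then $ac \to bc$ for every $c \in S$. Fix $0 < x \leq ac$. The strategy is to ``pull $x$ back'' to the level of $a$ by multiplying on the right by $c^*$. A short calculation using $x = (ac) \cdot x^*x$ shows that $xc^* = a \cdot (cx^*xc^*)$ with $cx^*xc^*$ an idempotent, so $xc^* \leq a$. Similarly, the inequality $x^*x \leq c^*a^*ac \leq c^*c$ yields $xc^* \cdot c = x$, and in particular $xc^* \neq 0$. Applying $a \to b$ to the nonzero element $xc^* \leq a$ produces $0 < z \leq xc^*$ with $z \leq b$; since right multiplication by $c$ is order-preserving, $zc \leq xc^* \cdot c = x$ and $zc \leq bc$.

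The one technical point is showing that $zc \neq 0$. From $z \leq xc^*$ one obtains $z^*z \leq (xc^*)^*(xc^*) = cx^*xc^* \leq cc^*$, hence $z^*z \cdot cc^* = z^*z$. Multiplying this identity on the left by $z$ and using $z \cdot z^*z = z$ gives $zcc^* = z$, and therefore $(zc)(zc)^* = zcc^*z^* = zz^* \neq 0$. This completes the proof that $ac \to bc$, and the argument for left compatibility is symmetric, using $c^*x$ in place of $xc^*$. The main obstacle throughout is bookkeeping: one must keep careful track of how the ``domain'' idempotent $x^*x$ of $x$ sits relative to $c^*c$ in order to guarantee that passing between the levels $ac$ and $a$ via $\cdot c^*$ is reversible and preserves nonvanishing.
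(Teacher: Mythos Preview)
Your proof is correct. The approach is essentially the same as the paper's---both rely on the key trick of multiplying by an adjoint ($c^*$, $a^*$, or $d^*$) to transport a nonzero element between the ``$ac$-level'' and the ``$a$-level'' (or ``$c$-level''), together with the idempotent bookkeeping $z^*z \leq cc^* \Rightarrow zcc^* = z$ to guarantee nonvanishing after pushing forward again.

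There is a minor organizational difference worth noting. You establish one-sided compatibility of $\to$ (right, then left by a symmetric argument) and let the full congruence property follow from left- and right-compatibility plus transitivity. The paper instead proves the two-variable statement $a \lr b,\ c \lr d \Rightarrow ac \lr bd$ directly in a single pass: starting from $0 < x \leq ac$, it first pulls back via $a^*$ to land below $c$, applies $c \lr d$, pushes forward to get something below $ad$, then pulls back via $d^*$ to land below $a$, applies $a \lr b$, and pushes forward once more. Your modular route is slightly cleaner and isolates the essential step (one application of $\to$ per multiplication); the paper's route avoids having to assemble the pieces afterward. Neither approach gains any real generality over the other.
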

\begin{proof}
	It is clear that $a \lr a$ for all $a \in S$, since ${x^\downarrow} \cap {a^\downarrow} = {x^\downarrow} \neq 0$ whenever $0 < x \leq a$. It is also immediate that 
	$\lr$ is symmetric. Now suppose $a \lr b$ and $b \lr c$, and let $0 < x \leq a$. Then we have ${x^\downarrow} \cap {b^\downarrow} \neq 0$, so there exists $y \neq 0$ 
	such that $y \in {x^\downarrow} \cap {b^\downarrow}$. Since $b \lr c$, we must have ${y^\downarrow} \cap {c^\downarrow} \neq 0$. Thus
	\[
		0 \neq {y^\downarrow} \cap {c^\downarrow} \subseteq {x^\downarrow} \cap {c^\downarrow},
	\]
	and it follows that $a \lr c$. Thus $\lr$ defines an equivalence relation on $S$.
	
	Now we show that $\lr$ is a congruence. Suppose $a \lr b$ and $c \lr d$, and let $0 < x \leq ac$. Then it is easily checked that $aa^*x = x$, so $a^*x \neq 0$ and
	\[
		0 \leq a^*x \leq a^*ac \leq c.
	\]
	Since $c \lr d$, we have $(a^*x)^\down \cap d^\down \neq 0$. That is, there exists $y \neq 0$ such that $y \leq a^*x$ and $y \leq d$. It is again easy to verify
	that $a^*a y = y$, so $ay \neq 0$. Thus 
	\[
		0 < ay \leq ad, \quad 0 < ay \leq aa^*x = x.
	\]
	From the first inequality it follows that $ayd^*d = ay$ and
	\[
		0 < ayd^* \leq add^* \leq a,
	\]
	so $(ayd^*)^\down \cap b^\down \neq 0$. If we let $z \neq 0$ such that $z \leq ayd^*$ and $z \leq b$, it is straightforward to check that $zdd^* = z$, so $zd \neq 0$.
	Thus
	\[
		0 < zd \leq bd, \quad 0 < zd \leq ayd^*d = ay \leq x,
	\]
	so $x^\down \cap (bd)^\down \neq 0$. Therefore, $ac \lr bd$, and $\lr$ is a congruence.
	
	Finally, notice that if $a \rightarrow 0$, then it must be the case that $a = 0$. Thus $a \lr 0$ if and only if $a=0$, so $\lr$ is a 0-restricted congruence.
\end{proof}

Since $\lr$ is a congruence, we may form the quotient $S/\!\!\lr$. We claim that by doing so, we obtain an inverse semigroup with a 0-disjunctive semilattice. 

\begin{prop}
\label{prop:0disjunctive}
	Let $S$ be an inverse semigroup with 0. 
\begin{enumerate}
	\item The semilattice of $S / \! \! \leftrightarrow$ is $0$-disjunctive.
	
	\item If $\lr$ is equality, then $E(S)$ is 0-disjunctive.
	
	\item If $S$ is fundamental and $E(S)$ is 0-disjunctive, then $\lr$ is equality.
\end{enumerate}
\end{prop}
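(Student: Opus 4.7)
The plan for all three parts rests on a single observation: if $e, f \in E(S)$ satisfy $e \leq f$, then $e \rightarrow f$ is automatic (for any $0 < x \leq e$, one has $x \in x^\down \cap f^\down$). Consequently, assuming $e \leq f$, the statement $e \not\lr f$ reduces to $f \not\rightarrow e$, which by definition produces some $0 < x \leq f$ with $x^\down \cap e^\down = \{0\}$. Because $x \leq f$ and $f$ is idempotent, $x$ is itself idempotent; and since $xe \in x^\down \cap e^\down$, we get $xe = 0$. This single construction will supply the disjunctive witness needed in (1) and (2), and will be the germ of the contradiction in (3).

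For (1), suppose $[e] < [f]$ in $E(S/\!\!\lr)$. Since $[e][f] = [e]$ we have $ef \lr e$, so I may replace $e$ by $ef$ without changing $[e]$, thereby arranging $e \leq f$ in $S$. The inequality $[e] \neq [f]$ then gives $e \not\lr f$, and the preceding observation furnishes an idempotent $x$ with $0 < x \leq f$ and $xe = 0$. Setting $[e'] = [x]$ yields $[0] < [e'] \leq [f]$ and $[e'][e] = [xe] = [0]$; and $[e'] = [f]$ is impossible, for it would give $[e'][e] = [f][e] = [e] \neq [0]$. For (2), the hypothesis that $\lr$ is equality trivially makes $0 < e < f$ in $E(S)$ imply $e \not\lr f$, so the same witness $e' = x$ works; note $e' \neq f$ because $ee' = ef = e \neq 0$.

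For (3), the strategy is to show under the hypotheses that $\lr$ is idempotent-separating; the maximality of $\mu$ and the assumption that $S$ is fundamental (i.e., $\mu$ is equality) then force $\lr \subseteq \mu$, so $\lr$ is equality. Take $e, f \in E(S)$ with $e \lr f$. Since $\lr$ is $0$-restricted, I may assume $e, f \neq 0$; applying $e \rightarrow f$ with $x = e$ produces a nonzero element below both $e$ and $f$, so $ef \neq 0$. Suppose for contradiction that $ef < e$. Then $0$-disjunctivity applied to $0 < ef < e$ yields $0 < e'' < e$ with $e''(ef) = 0$, which simplifies to $e''f = 0$ because $e'' \leq e$. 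Now $e \rightarrow f$ applied to $e''$ produces a nonzero $y \leq e''$ with $y \leq f$, whence $y = y e'' f = 0$, a contradiction. Hence $ef = e$, and by symmetry $ef = f$, so $e = f$.

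No serious obstacle is expected; the arguments are short and local. The small points requiring care are (i) the choice of a representative for $[e]$ in (1) so that it lies below $f$ in $S$, (ii) verifying the strict inequalities $[e'] < [f]$ in (1) and $e' < f$ in (2), and (iii) pinpointing the pair $ef < e$ as the correct input for $0$-disjunctivity in (3).
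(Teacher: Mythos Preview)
Your proposal is correct and takes essentially the same approach as the paper: in (1) and (2) you both extract a witness from $f \not\rightarrow e$, and in (3) you both show $\lr$ is idempotent-separating and then invoke $\lr \subseteq \mu$ together with fundamentality. The only cosmetic differences are that the paper argues $[f'] \neq [f]$ by showing $f \not\rightarrow f'$ directly (rather than via $[f][e] \neq [0]$), and in (3) applies $0$-disjunctivity to $ef < f$ and uses the congruence property plus $0$-restrictedness in place of your direct use of $e \rightarrow f$.
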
 
\begin{proof}
	Let $[s]$ denote the equivalence class of $s \in S$ in $S / \!\!\lr$. For (1), suppose that $0 \neq [e] < [f]$ where $e,f$ are idempotents in $S$. Then $ef \leftrightarrow e$ 
	and it follows that $e \rightarrow f$. Since $[e] \neq [f]$, $f \not\rightarrow e$. So there exists $0 < f' < f$ such that $f'e = 0$. Note that $f \not\rightarrow f'$ since 
	$ef' = 0$. Thus $0 < [f'] < [f]$ and $[f'][e] = 0$. Thus $E(S / \!\!\lr)$ is 0-disjunctive. Clearly (2) now follows as a special case, since $S/ \! \! \lr$ is equal to $S$ when 
	$\lr$ is equality.
	
	For (3), suppose $S$ is fundamental and $E(S)$ is 0-disjunctive. Let $e$ and $f$ be nonzero idempotents with $e \lr f$. If $e \neq f$ then either $ef < f$ or $ef < e$. 
	Suppose without loss of generality that $ef < f$, and notice that $ef \neq 0$ since $\leftrightarrow$ is $0$-restricted. Since $E(S)$ is $0$-disjunctive, there exists 
	$0 \neq e' < f$ such that $efe' = 0$. Then $e \lr f$ implies $0 = efe' \lr e'$, which is a contradiction. Thus $e = f$. Now suppose $s \lr t$ for some nonzero $s, t \in S$. Then 
	$s^* e s \lr t^* e t$ for all $e \in E(S)$, so $s \mcmu t$. Since $S$ is fundamental, $s = t$.
\end{proof}

Our goal now is to analyze the tight groupoid of $S/\! \! \lr$. We first claim that the tight filter spaces of $S$ and $S/\! \! \lr$ are homeomorphic. Toward this end, we 
let $\tau : S \to S/\! \! \lr$ denote the quotient homomorphism, and we define a map $\theta : {E(S)_0\sphat} \to E(S/\!\!\lr)_0\sphat$ by $\theta(F) = \tau(F)^{\uparrow}$. 

\begin{lem}
	If $F \in E(S)_0\sphat$ is an ultrafilter, then $\theta(F)$ is an ultrafilter.
\end{lem}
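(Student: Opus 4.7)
My plan is to use the standard characterization of ultrafilters in a meet semilattice with $0$: a filter $G$ is an ultrafilter if and only if for every $h \notin G$ there exists $g \in G$ with $gh = 0$. This reduces the ultrafilter property of $\theta(F)$ to that of $F$ via the quotient map $\tau$.

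First I would verify that $\theta(F)$ is itself a filter in $E(S/\!\!\lr)$ not containing $0$. It is nonempty and upward-closed by construction. Closure under products holds because if $[g_1] \geq [f_1]$ and $[g_2] \geq [f_2]$ with $f_1, f_2 \in F$, then $[g_1][g_2] \geq [f_1 f_2]$ and $f_1 f_2 \in F$. Finally, $0 \notin \theta(F)$: if $[f] \leq 0$ for some $f \in F$, then $f \lr 0$, and hence $f = 0$ by the $0$-restrictedness of $\lr$, contradicting $0 \notin F$.

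For maximality, take $h \in E(S/\!\!\lr) \setminus \theta(F)$. By Lallement's lemma for inverse semigroup congruences, every idempotent in the quotient admits an idempotent preimage, so there is $g \in E(S)$ with $\tau(g) = h$. If $g$ lay in $F$, then $h = \tau(g) \in \tau(F) \subseteq \theta(F)$, a contradiction, so $g \notin F$. Since $F$ is an ultrafilter in $E(S)$, there exists $f \in F$ with $fg = 0$. Then $\tau(f) \in \theta(F)$ and $\tau(f)\, h = \tau(fg) = 0$, exhibiting an element of $\theta(F)$ orthogonal to $h$ and establishing maximality.

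I expect the only substantive step to be the choice of the idempotent representative $g$ via Lallement's lemma; everything else is a routine transfer of data across the quotient, made possible by $\lr$ being $0$-restricted (so that $\tau(0)$ is the zero of $S/\!\!\lr$).
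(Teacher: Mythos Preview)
Your argument is correct and is essentially the paper's proof in contrapositive form: both use the characterization that a filter $G$ is an ultrafilter if and only if every element meeting all of $G$ nontrivially already lies in $G$ (cited in the paper as \cite[Lemma 3.3]{exel}), and both rely on lifting an idempotent of $S/\!\!\lr$ to $E(S)$. You are simply more explicit than the paper in invoking Lallement's lemma for that lift and in spelling out why $\theta(F)$ is a proper filter, points the paper leaves to the reader.
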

\begin{proof}
	For an ultrafilter $F \in E(S)_0\sphat$, note that $\theta(F)$ is a filter. Next, suppose $f \in E(S/\! \!\lr)$ satisfies $fe \neq 0$ for all $e \in \theta(F)$. In particular, for all $e' \in F$ we have $f \tau(e') \neq 0$. 
	Choose $f' \in E(S)$ such that $f = \tau(f')$. Then for all $e' \in F$ we have
	\[
		\tau(f' e') = \tau(f') \tau(e') = fe \neq 0,
	\]
	which implies that $f'e' \neq 0$. Since $F$ is an ultrafilter, $f' \in F$ by \cite[Lemma 3.3]{exel}. It follows that $f = \tau(f') \in \theta(F)$. Thus $\theta(F)$ is an ultrafilter.
\end{proof}

\begin{lem}
\label{lem:uf}
	Let $F \in E(S)_0\sphat$ be an ultrafilter. An idempotent $e \in E(S)$ belongs to $F$ if and only if $\tau(e) \in \theta(F)$. Consequently, $\theta(F) = \tau(F)$ whenever 
	$F$ is an ultrafilter. 
\end{lem}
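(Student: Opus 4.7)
The plan is to handle the two implications separately. The forward direction is immediate from the definition: if $e \in F$, then $\tau(e) \in \tau(F) \subseteq \tau(F)^{\up} = \theta(F)$. The real content is the reverse direction, which I would prove by using the ultrafilter characterization recorded in \cite[Lemma 3.3]{exel}: for an ultrafilter $F$ and $e \in E(S)$, membership $e \in F$ is equivalent to $eg \neq 0$ for every $g \in F$.

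Suppose $\tau(e) \in \theta(F) = \tau(F)^{\up}$. By definition of the upward closure, there exists $f \in F$ with $\tau(f) \leq \tau(e)$ in $E(S/\!\!\lr)$. Unpacking this inequality, $\tau(f) = \tau(f)\tau(e) = \tau(fe)$, so $f \lr fe$; in particular $f \to fe$. Now fix an arbitrary $g \in F$. Since $F$ is a filter, $fg \in F$, and because $\lr$ is $0$-restricted (or simply because $F$ does not contain $0$), $fg \neq 0$. Applying the relation $f \to fe$ to the element $0 < fg \leq f$ produces a nonzero $h$ with $h \leq fg$ and $h \leq fe$. The first inequality gives $h \leq g$ and the second gives $h \leq e$, hence $h \leq eg$, so $eg \neq 0$. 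Since $g \in F$ was arbitrary, the ultrafilter criterion forces $e \in F$.

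For the consequence, I have already shown that $\tau(e) \in \theta(F)$ implies $e \in F$, and therefore $\tau(e) \in \tau(F)$. This yields the containment $\theta(F) \subseteq \tau(F)$, while the reverse containment $\tau(F) \subseteq \tau(F)^{\up} = \theta(F)$ is automatic, so $\theta(F) = \tau(F)$.

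The main (and really only) obstacle is the reverse implication, and specifically the step of extracting a witness from the relation $f \to fe$ that simultaneously sits below $g$ and $e$. Once one recognizes that $\tau(f) \leq \tau(e)$ can be rewritten as $f \lr fe$, the existence of the common lower bound $h$ is exactly what the definition of $\to$ delivers, and the upward containments $h \leq fg \leq g$ and $h \leq fe \leq e$ finish the argument. No further ingredient beyond the ultrafilter criterion from \cite{exel} is needed.
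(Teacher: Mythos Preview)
Your proof is correct and follows essentially the same approach as the paper: from $\tau(f) \leq \tau(e)$ deduce $f \lr ef$, then show $eg \neq 0$ for every $g \in F$ and invoke the ultrafilter criterion from \cite[Lemma 3.3]{exel}. The only cosmetic difference is that the paper argues $eg \neq 0$ by noting $\tau(efg) = \tau(f)\tau(g) \neq 0$ and using that $\lr$ is $0$-restricted, whereas you unpack the relation $f \to fe$ explicitly to produce a nonzero witness below $eg$.
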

\begin{proof}
	Obviously $e \in F$ implies $\tau(e) \in \theta(F)$. On the other hand, suppose $\tau(e) \in \theta(F)$. Then for some $f \in F$, $\tau(f) \leq \tau(e)$. So $ef \lr f$, and for all $e' \in F$ we have
	\[
		\tau(efe') = \tau(f) \tau(e') \neq 0.
	\]
	Thus $ee' \neq 0$, so \cite[Lemma 3.3]{exel} implies $e \in F$ since $F$ is an ultrafilter.
\end{proof}

For brevity, we let $E(S)_u\sphat$ and $E(S/\! \!\lr)_u\sphat$ denote the sets of ultrafilters in $E(S)_0\sphat$ and $E(S/\! \!\lr)_0\sphat$, respectively. 

\begin{lem}
\label{lem:uf-homeo}
	The map $\theta$ restricts to a homeomorphism between the ultrafilter spaces $E(S)_u\sphat$ and $E(S/ \! \! \lr)_u\sphat$.
\end{lem}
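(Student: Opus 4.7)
My plan is to show that $\theta$ restricted to $E(S)_u\sphat$ is a continuous bijection onto $E(S/\!\!\lr)_u\sphat$ with continuous inverse, by exhibiting an explicit inverse and using the previous two lemmas to match up basic open sets on both sides.

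First I would establish the bijection. Injectivity is immediate: if $\theta(F) = \theta(F')$ for ultrafilters $F, F'$, then for every $e \in E(S)$ we have $\tau(e) \in \theta(F) \Leftrightarrow \tau(e) \in \theta(F')$, and by Lemma \ref{lem:uf} this says $e \in F \Leftrightarrow e \in F'$. For surjectivity, given an ultrafilter $G \in E(S/\!\!\lr)_u\sphat$, I would propose the candidate $F := \tau^{-1}(G) \cap E(S)$. This is clearly a filter in $E(S)$ since $\tau$ is order-preserving and multiplicative, and it does not contain $0$ because $\lr$ is 0-restricted (so $\tau(0) = 0 \notin G$). To verify $F$ is an ultrafilter, suppose $e \in E(S)$ satisfies $ef \neq 0$ for every $f \in F$. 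Given any $g \in G$, surjectivity of the induced map $\tau : E(S) \to E(S/\!\!\lr)$ (a standard fact for inverse semigroup quotients, obtained by replacing any preimage $s$ of an idempotent $g$ by $s^*s$) yields some $f \in E(S)$ with $\tau(f) = g$, and such an $f$ lies in $F$. Hence $\tau(e)g = \tau(ef) \neq 0$ for every $g \in G$, so \cite[Lemma 3.3]{exel} gives $\tau(e) \in G$, i.e., $e \in F$. Thus $F$ is an ultrafilter, and $\theta(F) = \tau(F) = G$ follows from Lemma \ref{lem:uf} together with surjectivity of $\tau$ on idempotents.

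Next, I would check that $\theta$ and its inverse are continuous by comparing the basic open sets. A typical basic open set in $E(S/\!\!\lr)_u\sphat$ has the form
\[
	N(\tau(e); \tau(e_1), \ldots, \tau(e_n)) \cap E(S/\!\!\lr)_u\sphat,
\]
since every idempotent of $S/\!\!\lr$ is of the form $\tau(e)$ for some $e \in E(S)$. Applying Lemma \ref{lem:uf}, I get that $\theta(F)$ lies in this set exactly when $F$ is an ultrafilter containing $e$ and avoiding each $e_i$, i.e.,
\[
	\theta^{-1}\bigl( N(\tau(e); \tau(e_1), \ldots, \tau(e_n)) \cap E(S/\!\!\lr)_u\sphat \bigr) = N(e; e_1, \ldots, e_n) \cap E(S)_u\sphat.
\]
This shows $\theta$ is continuous. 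The analogous direct computation shows
\[
	\theta\bigl( N(e; e_1, \ldots, e_n) \cap E(S)_u\sphat \bigr) = N(\tau(e); \tau(e_1), \ldots, \tau(e_n)) \cap E(S/\!\!\lr)_u\sphat,
\]
using Lemma \ref{lem:uf} in the nontrivial direction (an ultrafilter $G$ on the right produces $\tau^{-1}(G)$ as its preimage under $\theta$, and it lies in the asserted set by construction). Hence $\theta$ is also open, completing the proof that it is a homeomorphism of ultrafilter spaces.

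I do not anticipate a major obstacle here; the two preceding lemmas do essentially all the work. The one place where care is needed is verifying that $\tau^{-1}(G)$ is an ultrafilter — this requires both the fact that the restriction of $\tau$ to $E(S)$ is surjective onto $E(S/\!\!\lr)$ and the 0-restricted property of $\lr$, and it is the step that actually uses the ultrafilter hypothesis through \cite[Lemma 3.3]{exel}.
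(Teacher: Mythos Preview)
Your proof is correct and follows essentially the same approach as the paper's: injectivity via Lemma~\ref{lem:uf}, surjectivity by taking $F = \tau^{-1}(G)$ and verifying it is an ultrafilter using the criterion from \cite[Lemma 3.3]{exel}, and the homeomorphism by matching basic open sets through Lemma~\ref{lem:uf}. You include a bit more detail (the explicit reason $\tau$ surjects onto idempotents, and writing continuity and openness as two separate computations), but the argument is the same.
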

\begin{proof}
	Suppose $F, F' \in E(S)_u\sphat$ such that $\theta(F) = \theta(F')$. Let $e \in F$. Then $\tau(e) \in \theta(F) = \theta(F')$, so $e \in F'$ by the previous lemma. 
	Therefore, $F \subseteq F'$. By symmetry, $F' \subseteq F$. Thus $F = F'$, and $\theta$ is injective on $E(S)_u\sphat$.
	
	Now suppose $F' \in E(S/\! \! \lr)_u\sphat$ and put $F = \tau^{-1}(F')$. We claim that $F$ is an ultrafilter. One can quickly check that $F$ is a filter. Next let $e \in E(S)$ such that $ef \neq 0$ for all $f \in F$. Then $\tau(e) \tau(f) = \tau(ef) \neq 0$ for all $f \in F$. Thus $\tau(e) \in F'$ 
	since $F'$ is an ultrafilter, so $e \in F$. Therefore, $F \in E(S)_u\sphat$. It is now immediate that $\theta(F) = \tau(F) = F'$, so $\theta$ maps $E(S)_u\sphat$ onto 
	$E(T)_u\sphat$.
	
	It remains to see that the restriction of $\theta$ to $E(S)_u\sphat$ is continuous and open. If $e, e_1, \ldots, e_n \in E(S)$, then we clearly have
	\[
		\theta \bigl( N(e, {e_1, \ldots, e_n}) \cap E(S)_u\sphat \bigr) = N(\tau(e), \tau(e_1), \ldots, \tau(e_n)) \cap E(S/\! \! \lr)_u\sphat
	\]
	by Lemma \ref{lem:uf}. Thus $\theta$ takes basic open sets to basic open sets, and it follows that $\theta$ defines a homeomorphism of $E(S)_u\sphat$ onto 
	$E(T)_u\sphat$.
\end{proof}

Now we extend the preceding results for ultrafilters to tight filters. Our first order of business is to check that $\theta$ maps $E(S)_{{\rm{tight}}}\sphat$ into 
$E(S/\!\!\lr)_{{\rm{tight}}}\sphat$.

\begin{lem}
\label{lem:tfextension}
	Let $F \in E(S)_{{\rm{tight}}}\sphat$, and suppose $\{F_i\}_{i=1}^\infty$ is a sequence of ultrafilters converging to $F$. Then $\theta(F_i)$ converges to $\theta(F)$
	in $E(S/\!\!\lr)_{{\rm{tight}}}\sphat$. Hence $\theta(F)$ is a tight filter and $\theta$ restricts to a map 
	$\theta : E(S)_{{\rm{tight}}}\sphat \to E(S/\!\!\lr)_{{\rm{tight}}}\sphat$.
\end{lem}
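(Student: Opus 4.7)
The plan is to show $\theta(F_i) \to \theta(F)$ directly by verifying that every basic open neighborhood of $\theta(F)$ in $E(S/\!\!\lr)_0\sphat$ eventually contains $\theta(F_i)$; the tightness of $\theta(F)$ then follows because $E(S/\!\!\lr)_{{\rm tight}}\sphat$ is closed by definition and each $\theta(F_i)$ is an ultrafilter by the preceding lemma. Since $\tau$ is surjective and sends idempotents to idempotents, every idempotent of $S/\!\!\lr$ has the form $\tau(e)$ for some $e \in E(S)$, so a basic neighborhood of $\theta(F)$ may be written as $U = N(\tau(e); \tau(e_1), \ldots, \tau(e_n))$ for idempotents $e, e_1, \ldots, e_n \in E(S)$.

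The ``positive'' condition $\tau(e) \in \theta(F) = \tau(F)^{\uparrow}$ provides some $f \in F$ with $\tau(f) \leq \tau(e)$. Since $F_i \to F$ and $N^f$ is a basic open neighborhood of $F$, we have $f \in F_i$ eventually; hence $\tau(f) \in \tau(F_i) \subseteq \theta(F_i)$, and the upward closure of $\theta(F_i)$ yields $\tau(e) \in \theta(F_i)$. This direction uses only the definition of $\theta$, not any ultrafilter hypothesis.

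For each ``negative'' condition $\tau(e_j) \notin \theta(F)$, the inclusion $\tau(F) \subseteq \theta(F)$ forces $e_j \notin F$ (otherwise $\tau(e_j)$ would already lie in $\tau(F) \subseteq \theta(F)$). The complement of the compact-open set $N^{e_j}$ is an open neighborhood of $F$ in the Hausdorff space $E(S)_0\sphat$, so $e_j \notin F_i$ eventually. Here the ultrafilter hypothesis enters crucially: by Lemma \ref{lem:uf}, $e_j \notin F_i$ is equivalent to $\tau(e_j) \notin \theta(F_i)$ for the ultrafilter $F_i$. Combining the two steps gives $\theta(F_i) \in U$ eventually, so $\theta(F_i) \to \theta(F)$ in $E(S/\!\!\lr)_0\sphat$, and closedness of $E(S/\!\!\lr)_{{\rm tight}}\sphat$ completes the proof. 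The main subtle point is the asymmetric handling of the ``in'' and ``not-in'' constraints: the positive side needs only upward-closedness of $\theta(F)$, while the negative side genuinely requires Lemma \ref{lem:uf}, which is available precisely because the approximating $F_i$ are ultrafilters.
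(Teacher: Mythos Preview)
Your proof is correct and follows essentially the same approach as the paper's own argument: handle the positive constraint $\tau(e)\in\theta(F)$ via the definition of $\theta$ and the open set $N^f$, handle each negative constraint $\tau(e_j)\notin\theta(F)$ using openness of $N_{e_j}$ together with Lemma~\ref{lem:uf} for the approximating ultrafilters, and conclude by closedness of the tight filter space. The only cosmetic difference is that the paper treats the single sets $N^{\tau(e)}$ and $N_{\tau(e)}$ separately and then remarks that this suffices for arbitrary basic open sets, whereas you package both into a single basic set $N(\tau(e);\tau(e_1),\ldots,\tau(e_n))$ from the outset.
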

\begin{proof}
	Suppose $e \in E(S)$ with $\tau(e) \in \theta(F)$. Then there exists $f \in F$ such that $\tau(f) \leq \tau(e)$. In other words, $F \in N^f$. Since $F_i \to F$, the
	$F_i$ eventually belong to $N^f$. Hence $\theta(F_i) \in N^{\tau(f)} \subseteq N^{\tau(e)}$ eventually.
	
	Now suppose $\tau(e) \not\in \theta(F)$. Then $e \not\in F$, so $F \in N_e$. It follows that $F_i \in N_e$ eventually, so $\theta(F_i) \in N_{\tau(e)}$ eventually by
	Lemma \ref{lem:uf}.
	
	Based on these two observations, it is now evident that $\theta(F_i)$ eventually belongs to any basic open set containing $\theta(F)$. Hence $\theta(F_i) \to \theta(F)$, 
	so $\theta(F) \in E(S/\!\!\lr)_{{\rm{tight}}}\sphat$ as it is a limit of ultrafilters.
\end{proof}

We claim that $\theta$ yields a homeomorphism of $E(S)_{\text{tight}}\sphat$ onto $E(S/\!\!\lr)_{\text{tight}}\sphat$. To prove it, we need an extension of Lemma \ref{lem:uf} to 
tight filters.

\begin{lem}
\label{lem:tf}
	Let $F \in E(S)_{{\rm{tight}}}\sphat$. For each $e \in E(S)$, we have $e \in F$ if and only if $\tau(e) \in \theta(F)$. Consequently, $\theta(F) = \tau(F)$ whenever $F$ is a tight
	filter. 
\end{lem}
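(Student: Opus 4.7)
The plan is to prove the biconditional by bootstrapping from the ultrafilter version (Lemma~\ref{lem:uf}) using density of ultrafilters in the tight filter space. The forward implication $e \in F \Rightarrow \tau(e) \in \theta(F)$ is immediate from the definition $\theta(F) = \tau(F)^{\up}$, so the work is all in the converse.

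For the reverse implication, I would suppose $\tau(e) \in \theta(F)$, so there exists $f \in F$ with $\tau(f) \leq \tau(e)$, and argue by contradiction: assume $e \notin F$. Then $F$ lies in the basic open set $D(f;e) = N(f;e) \cap \tfE$ of $\tfE$. Since $F$ is tight, it is a limit of ultrafilters, so this open neighborhood of $F$ must meet the ultrafilter space; choose an ultrafilter $F'$ with $F' \in D(f;e)$, that is, $f \in F'$ and $e \notin F'$. Now $f \in F'$ gives $\tau(f) \in \theta(F')$, and since $\theta(F')$ is upward-closed and $\tau(f) \leq \tau(e)$, we get $\tau(e) \in \theta(F')$. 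Applying Lemma~\ref{lem:uf} to the ultrafilter $F'$ then forces $e \in F'$, contradicting $e \notin F'$.

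The consequence $\theta(F) = \tau(F)$ then falls out cheaply: the inclusion $\tau(F) \subseteq \theta(F)$ is built into the definition, and conversely any $\tau(e) \in \theta(F)$ satisfies $e \in F$ by the biconditional just proved, so $\tau(e) \in \tau(F)$.

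The only step that could cause trouble is ensuring that $D(f;e)$ is genuinely open in $\tfE$ and is a neighborhood of $F$ (not merely a set containing $F$), but this is built into the definition of the topology on $\tfE$, so the argument is essentially routine once Lemma~\ref{lem:uf} is in hand. The conceptual point is that passing from ultrafilters to their closure is harmless here because the relevant conditions ($e \in F$ versus $e \notin F$) are each detected by a basic open set, and $\lr$-equivalence interacts well with upward closure.
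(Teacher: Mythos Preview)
Your proof is correct and takes a genuinely different route from the paper's. The paper argues directly rather than by contradiction: it fixes a sequence of ultrafilters $F_i \to F$, invokes Lemma~\ref{lem:tfextension} to get $\theta(F_i) \to \theta(F)$, deduces that $\tau(e) \in \theta(F_i)$ eventually, applies Lemma~\ref{lem:uf} to get $e \in F_i$ eventually, and then uses that $N^e$ is closed to conclude $e \in F$. Your argument instead exploits density of ultrafilters in $\tfE$ in one shot: the hypothesis $\tau(e) \in \theta(F)$ produces a witness $f \in F$ with $\tau(f) \leq \tau(e)$, and if $e \notin F$ then the nonempty open set $D(f;e)$ contains an ultrafilter $F'$, which immediately contradicts Lemma~\ref{lem:uf}. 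The advantage of your approach is that it does not need Lemma~\ref{lem:tfextension} at all, making the argument shorter and more self-contained; the paper's sequential approach, on the other hand, fits naturally into its overall development since Lemma~\ref{lem:tfextension} is proved anyway (it is what shows $\theta$ lands in the tight filter space to begin with).
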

\begin{proof}
	Clearly $e \in F$ implies $\tau(e) \in \theta(F)$. Conversely, suppose $e \in E(S)$ with $\tau(e) \in \theta(F)$. Let $\{F_i\}_{i=1}^{\infty}$ be a sequence of ultrafilters
	converging to $F$ in $E(S)_{\text{tight}}\sphat$. Lemma \ref{lem:tfextension} implies that $\theta(F_i) \to \theta(F)$, so $\theta(F_i)$ eventually belongs to the 
	basic open set $N^{\tau(e)}$. Thus $\tau(e) \in \theta(F_i)$ eventually. Since $\theta(F_i)$ is an ultrafilter, $e \in F_i$ eventually by Lemma \ref{lem:uf}, i.e., the $F_i$ eventually all belong
	to the basic open set $N^e$. But $N^e$ is closed, so it follows that $e \in F$.
\end{proof}

There is a characterization of tight filters, observed by Lawson \cite{LawsonStone} and Exel \cite[Proposition 11.8]{ExelTight}, that will now prove useful: A filter $F$ is 
tight if and only if $e \covby  \{e_1, e_2, \dots, e_n\}$ and $e \in F$ implies $e_i \in F$ for some $i$.

\begin{prop}
\label{prop:tighthomeo}
	The map $\theta$ restricts to a homeomorphism of $E(S)_{{\rm{tight}}}\sphat$ onto $E(S/\!\!\lr)_{{\rm{tight}}}\sphat$.
\end{prop}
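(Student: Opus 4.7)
The plan is to assemble the preceding lemmas into the three standard pieces of a homeomorphism---injectivity, surjectivity, and a bicontinuous correspondence on basic open sets. Injectivity on $E(S)_{{\rm{tight}}}\sphat$ is immediate from Lemma \ref{lem:tf}: if $F, F' \in E(S)_{{\rm{tight}}}\sphat$ satisfy $\theta(F) = \theta(F')$, then for every $e \in E(S)$ we have $e \in F$ iff $\tau(e) \in \theta(F) = \theta(F')$ iff $e \in F'$, so $F = F'$.

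For surjectivity I would exploit compactness together with the ultrafilter lift already produced in Lemma \ref{lem:uf-homeo}. Given $F' \in E(S/\!\!\lr)_{{\rm{tight}}}\sphat$, choose a sequence of ultrafilters $F'_i \in E(S/\!\!\lr)_u\sphat$ converging to $F'$. By Lemma \ref{lem:uf-homeo}, each $F'_i$ is $\theta(F_i)$ for a unique ultrafilter $F_i \in E(S)_u\sphat$. Since $E(S)_{{\rm{tight}}}\sphat$ is compact (and metrizable, as $S$ is countable), some subsequence of $(F_i)$ converges to a tight filter $F \in E(S)_{{\rm{tight}}}\sphat$. Lemma \ref{lem:tfextension} applied to that subsequence then gives $\theta(F_i) \to \theta(F)$, while by construction $\theta(F_i) = F'_i \to F'$; since the target is Hausdorff, $\theta(F) = F'$.

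For bicontinuity I would verify the identity
\[
\theta\bigl(N(e; e_1, \ldots, e_n) \cap E(S)_{{\rm{tight}}}\sphat\bigr) = N(\tau(e); \tau(e_1), \ldots, \tau(e_n)) \cap E(S/\!\!\lr)_{{\rm{tight}}}\sphat
\]
for all $e, e_1, \ldots, e_n \in E(S)$, which falls out of Lemma \ref{lem:tf} by exactly the same calculation used at the ultrafilter level in Lemma \ref{lem:uf-homeo}. To match basic open sets on the two sides one only needs the elementary observation that every idempotent of $S/\!\!\lr$ is $\tau(e)$ for some $e \in E(S)$: if $[s]^2 = [s]$ in $S/\!\!\lr$, then $[s]^* = [s]$ forces $\tau(s^*s) = \tau(s)$, so $[s] = [s^*s]$. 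Thus $\theta$ sends basic open sets to basic open sets and pulls back basic open sets from basic open sets, giving openness and continuity simultaneously.

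The main obstacle is the surjectivity step: $\theta$ is defined only in one direction, as $F \mapsto \tau(F)^\uparrow$, so a tight filter downstairs has no tautological pre-image and must be built by approximation. Compactness of the tight filter space produces the cluster point, Lemma \ref{lem:tfextension} transports it correctly through $\theta$, and Hausdorffness of the target pins down the resulting limit. Everything else is formal bookkeeping around Lemma \ref{lem:tf}.
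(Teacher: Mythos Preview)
Your injectivity and bicontinuity arguments are correct and match the paper's treatment exactly. The surjectivity argument, however, has a genuine gap: the tight filter space $E(S)_{{\rm{tight}}}\sphat$ is \emph{not} compact in general---it is only locally compact (each $N^e$ is compact, but $\fE$ itself need not be, since the paper does not assume $S$ has a unit). For a concrete obstruction, take $E = \{0, e_1, e_2, \ldots\}$ with pairwise orthogonal $e_i$; then $\tfE = \{\{e_i\} : i \geq 1\}$ is an infinite discrete space. Without compactness, your sequence of lifted ultrafilters $(F_i)$ may escape to infinity and admit no convergent subsequence.

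The fix is minor: choose any $e \in E(S)$ with $\tau(e) \in F'$. Since $F'_i \to F'$ and $N^{\tau(e)}$ is open, eventually $\tau(e) \in F'_i = \theta(F_i)$, whence $e \in F_i$ by Lemma~\ref{lem:uf}. Thus the $F_i$ eventually lie in the compact set $N^e \cap E(S)_{{\rm{tight}}}\sphat$, and your subsequence argument then goes through.

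For comparison, the paper avoids compactness entirely and takes a more algebraic route to surjectivity. Given $F' \in E(S/\!\!\lr)_{{\rm{tight}}}\sphat$, one sets $F = \tau^{-1}(F')$ and verifies directly that $F$ is tight via the cover characterization: if $e \in F$ and $e \covby \{e_1, \ldots, e_n\}$ in $E(S)$, one checks that $\tau(e) \covby \{\tau(e_1), \ldots, \tau(e_n)\}$ in $E(S/\!\!\lr)$ (this is where the definition of $\lr$ does real work), so tightness of $F'$ forces some $\tau(e_i) \in F'$, hence $e_i \in F$. Your topological argument is conceptually clean once repaired, but the paper's direct verification is shorter and sidesteps the compactness subtlety altogether.
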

\begin{proof}
	Thanks to Lemma \ref{lem:tf}, the proof that $\theta$ is injective on $E(S)_{\text{tight}}\sphat$ is identical to the one for ultrafilters from Lemma \ref{lem:uf-homeo}. 
	To see that $\theta$ maps $E(S)_{{\rm{tight}}}\sphat$ onto $E(S/\!\!\lr)_{{\rm{tight}}}\sphat$, suppose $F' \in E(S/\!\!\lr)_{\text{tight}}\sphat$ and let $F = \tau^{-1}(F')$. 
	Let $e \in F$, and suppose $e \covby \{e_1, \ldots, e_n\}$. Then $\tau(e) \in F'$, and we claim that $\tau(e) \covby \{\tau(e_1), \ldots, \tau(e_n)\}$. Clearly 
	$\tau(e_i) \leq \tau(e)$ for all $i$. Suppose $\tau(f) \leq \tau(e)$ for some $f \in E(S)$. Then $\tau(f) = \tau(ee')$ for some $e' \in E(S)$, so $f \lr ee'$. Thus there exists 
	a nonzero idempotent $g \in {f^\down} \cap {(ee')^\down}$. In particular, $g \leq e$, so $ge_i \neq 0$ for some $i$. Therefore, $fe_i \neq 0$ for some $i$, whence 
	$\tau(f) \tau(e_i) \neq 0$ for some $i$. Thus $\tau(e) \covby \{\tau(e_1), \ldots, \tau(e_n)\}$. Since $F'$ is a tight filter, $\tau(e_i) \in F'$ for some $i$. It follows from 
	Lemma \ref{lem:tf} that $e_i \in F$, so $F$ is a tight filter. Thus $\theta : E(S)_{\text{tight}}\sphat \to E(S/\!\!\lr)_{\text{tight}}\sphat$ is surjective.
	
	As in Lemma \ref{lem:uf-homeo}, it is easy to see that $\theta$ takes basic open sets in $E(S)_{\text{tight}}\sphat$ to basic open sets in $E(S/\!\!\lr)_{\text{tight}}\sphat$. 
	Thus $\theta$ implements a homeomorphism between the tight filter spaces.
\end{proof}

The preceding lemmas do not require us to assume $S$ is Hausdorff. However, this hypothesis is necessary for the proof that the groupoids $\Gt(S)$ and $\Gt(S/\!\!\lr)$
are isomorphic. Lenz has proved a very similar result, \cite[Remark 6.10]{lenz}, under the assumption that meets exist in $S$. 

\begin{thm}\label{thm:groupoiddoublearrow}
	Let $S$ be a Hausdorff inverse semigroup, and let $\tau : S \to S/\!\!\lr$ be the quotient map. The map $\Phi : \Gt(S) \to \Gt(S/\!\!\lr)$ defined by
	\[
		\Phi([s, F]) = [\tau(s), \theta(F)]
	\]
	is an isomorphism of topological groupoids.
\end{thm}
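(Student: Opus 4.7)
The plan is to verify in turn the algebraic and topological parts of the claim: that $\Phi$ is a well-defined groupoid homomorphism, that it is a bijection, and that it is a homeomorphism on the canonical basic sets $\Theta(s, U)$. Most of the steps reduce to Lemma \ref{lem:tf} and Proposition \ref{prop:tighthomeo}; the single genuine obstacle lies in the injectivity argument, which is where the Hausdorff hypothesis enters.

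For well-definedness, if $[s, F] = [s', F]$ via $se = s'e$ with $e \in F$, then $\tau(s)\tau(e) = \tau(s')\tau(e)$ and $\tau(e) \in \theta(F)$ by Lemma \ref{lem:tf}, so $\Phi$ is unambiguous. Preservation of units, inverses, and multiplication is then direct; the only nontrivial check is the intertwining $\theta \circ \beta_s = \beta_{\tau(s)} \circ \theta$ on tight filters, which follows immediately once one rewrites $\theta(F) = \tau(F)$ using Lemma \ref{lem:tf}. For surjectivity, given $[r, G] \in \Gt(S/\!\!\lr)$, lift $r = \tau(s)$ and use Proposition \ref{prop:tighthomeo} to produce a tight filter $F$ with $\theta(F) = G$; Lemma \ref{lem:tf} then gives $s^*s \in F$ from $r^*r \in G$.

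Injectivity is the crux. Suppose $\Phi([s, F]) = \Phi([t, F'])$. Proposition \ref{prop:tighthomeo} gives $F = F'$, and the germ identification in $\Gt(S/\!\!\lr)$ yields $e$ with $\tau(e) \in \theta(F)$ and $\tau(s)\tau(e) = \tau(t)\tau(e)$. Applying Lemma \ref{lem:tf} and absorbing the factors $s^*s$ and $t^*t$, we may assume $e \in F$ lies below both $s^*s$ and $t^*t$, and $se \lr te$. The task is to produce $e' \in F$ with $e' \leq e$ and $se' = te'$. I would proceed in two steps. First, for every nonzero idempotent $f \leq e$, the relation $se \rightarrow te$ yields a nonzero $u \leq sf$ with $u \leq te$; writing $u = sh = tg$ for idempotents $h \leq f$ and $g \leq e$ and multiplying by $s^*$ and by $t^*$ shows $g = h$, producing a nonzero idempotent below $f$ on which $s$ and $t$ agree. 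Second, the Hausdorff hypothesis implies that the down-set $\{g \in E(S) : sg = tg\}$ is finitely generated, say by idempotents $a_1, \dots, a_n$. Combined with step one, this forces $e \covby \{a_1 e, \dots, a_n e\}$, and tightness of $F$ delivers some $a_i e \in F$, the desired $e'$.

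Finally, $\Phi$ carries the basic open set $\Theta(s, U) \cap \Gt(S)$ bijectively onto $\Theta(\tau(s), \theta(U)) \cap \Gt(S/\!\!\lr)$; since $\theta$ is a homeomorphism of the tight filter spaces by Proposition \ref{prop:tighthomeo}, $\Phi$ is both continuous and open. The hard part, as indicated, is the injectivity step: one must upgrade the purely algebraic identification $se \lr te$ to an actual equality on an element of $F$, and without the Hausdorff finiteness of $\{g \in E(S) : sg = tg\}$ there would be no way to invoke tightness of $F$ to extract such an element.
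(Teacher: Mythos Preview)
Your argument is correct and follows essentially the same strategy as the paper's: for injectivity, use the Hausdorff hypothesis to produce a finite generating set, combine it with the relation $\lr$ to exhibit a finite cover, and invoke tightness of $F$ to extract an element on which the two representatives agree. The paper packages this as a kernel computation (taking $t$ to be an idempotent and showing $\ker\Phi = \Gt(S)^{(0)}$), whereas you handle arbitrary $s,t$ directly; this is only a cosmetic difference.

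One small imprecision worth tightening: the full down-set $\{g \in E(S) : sg = tg\}$ need not be finitely generated under the Hausdorff hypothesis, since it contains every idempotent orthogonal to both $s^*s$ and $t^*t$. What is finitely generated is its intersection with $e^\downarrow$, because the map $g \mapsto sg$ is an order-isomorphism from $\{g \leq e : sg = tg\}$ onto $(se)^\downarrow \cap (te)^\downarrow$, and the latter is finitely generated by Hausdorffness. Since your cover $\{a_1 e, \dots, a_n e\}$ only uses generators below $e$ anyway, restricting to $e^\downarrow$ from the outset fixes the statement without affecting the rest of the argument.
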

\begin{proof}
	It is straightforward to check that $\Phi$ is well-defined. To show that $\Phi$ is a homomorphism, we need to know that $\theta(\beta_s(F)) = \beta_{\tau(s)}(\theta(F))$
	for all $F \in E(S)_{{\rm{tight}}}\sphat$ and $s \in S$ with $s^*s \in F$. If $e \in \beta_s(F)$, then $e \geq sfs^*$ for some $f \in F$, so $\tau(e) \geq \tau(s) \tau(f) \tau(s)^*$.
	Hence $\tau(e) \in \beta_{\tau(s)}(\theta(F))$, so $\theta(\beta_s(F)) \subseteq \beta_{\tau(s)}(\theta(F))$. The reverse containment is similar: if 
	$\tau(e) \in \beta_{\tau(s)}(\theta(F))$, then $\tau(e) \geq \tau(s) \tau(f) \tau(s)^* = \tau(sfs^*)$ for some $f \in F$, whence $\tau(e) \in \theta(\beta_s(F))$.
	It is now easy to see that
	\begin{align*}
		\Phi([t, \beta_s(F)]) \Phi([s, F]) &= [\tau(t), \beta_{\tau(s)}(\theta(F))][\tau(s), \theta(F)] \\
			&= [\tau(ts), \theta(F)] \\
			&= \Phi([ts, F]) \\
			&=\Phi([t, \beta_s(F)][s, F]),
	\end{align*}
	so $\Phi$ is a homomorphism.
	
	Note that $\Phi$ is surjective since $\theta$ and $\tau$ are both surjective. Furthermore, $\Phi$ is injective on the unit space of $\Gt(S)$ by Proposition 
	\ref{prop:tighthomeo}. Suppose $[s, F] \in \ker \Phi$, i.e., $[\tau(s), \tau(F)] = [e', F']$ for some $F' \in E(S/\!\!\lr)_{\text{tight}}\sphat$ and $e' \in F'$. Then 
	$\tau(F) = F'$, and we have $e' = \tau(e)$ for some $e \in F$. Moreover, there exists $f \in F$ such that $\tau(sf) = \tau(ef)$, meaning that $sf \lr ef$. 
	In particular, ${(sf)^\down} \cap {(ef)^\down} \neq 0$, and in fact
	\[
		sf^\downarrow \cap ef^\downarrow = \{e_1, e_2, \ldots, e_n\}^\downarrow
	\]
	for some collection of nonzero idempotents $e_1, \ldots, e_n \in E(S)$ since $S$ is Hausdorff. If $0 < x \leq ef$, then $x^\downarrow \cap sf^\downarrow \neq 0$, 
	and we can find a nonzero idempotent $y \in x^\downarrow$ such that $y \leq e_i$ for some $i$. Hence $xe_i \geq y > 0$, and it follows that $ef \rightarrow \{e_1, \ldots, e_n\}$.
	Since $F$ is a tight filter and $ef \in F$, it must be the case that $e_i \in F$ for some $i$. Now $e_i \leq ef$ and $e_i \leq sf$, so
	\[
		efe_i = e_i = sfe_i.
	\]
	Thus $[s, F] = [e, F]$ since $fe_i \in F$, and it follows that $\ker \Phi = \Gt(S)^{\scriptscriptstyle{(0)}}$. Hence $\Phi$ is injective.
	
	It remains to see that $\Phi$ is a homeomorphism. As in Lemma \ref{lem:uf-homeo}, it should be clear that $\Phi$ carries the basic open set $\Theta(s, D(e; {e_1, \ldots, e_n}))$
	in $\Gt(S)$ bijectively onto $\Theta(\tau(s), D(\tau(e); {\tau(e_1), \ldots, \tau(e_n)}))$. It follows that $\Phi$ is a homeomorphism, hence an isomorphism of topological groupoids.
\end{proof}

Having shown that $\Gt(S) \cong \Gt(S/\!\!\lr)$, we can now extend the characterization given by Exel and Pardo in \cite{ExelPardo} of the property that $\Gt(S)$ is effective
(though the term used there is \emph{essentially principal}). We first need a couple of other definitions from \cite{ExelPardo}. 

\begin{defn} Let $S$ be an inverse semigroup, and let $s \in S$. Given an idempotent $e \leq s^*s$ we say that:
\begin{enumerate}
\item $e$ is \emph{fixed} under $s$ if $se = e$, and
\item $e$ is \emph{weakly-fixed} under $s$ if $sfs^*f \neq 0$ for every nonzero idempotent $f \leq e$.
\end{enumerate}
\end{defn}

\begin{thm}
\label{thm:condL} 
Given an inverse semigroup $S$ with zero, consider the following statements:
\begin{enumerate}
	\item The tight groupoid of $S$ is effective.
	\item The standard action of $S$ on its tight filter space is topologically free.
	\item For every $s \in S$, and for every $e \in E$ that is weakly-fixed under $s$, there exists a finite cover for $e$ consisting of fixed idempotents.
	\item $S / \!\!\lr$ is fundamental.
\end{enumerate}
Then $(1) \Leftrightarrow (2) \Rightarrow (3) \Rightarrow (4)$. If $S$ is Hausdorff, then $(4) \Rightarrow (1)$ as well.
\end{thm}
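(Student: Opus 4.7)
The plan is to prove the chain $(1) \Leftrightarrow (2) \Rightarrow (3) \Rightarrow (4)$ first, and then handle $(4) \Rightarrow (1)$ separately under the Hausdorff hypothesis via Theorem~\ref{thm:groupoiddoublearrow}. The equivalence $(1) \Leftrightarrow (2)$ is a standard translation for transformation groupoids of inverse semigroup actions: $[s,F]$ lies in the interior of the isotropy bundle iff $s$ acts trivially (i.e., agrees with some idempotent of the filter) on an open neighborhood of $F$ in $\tfE$, and this is exactly the failure of topological freeness of the $S$-action. The implication $(2) \Rightarrow (3)$ is essentially the direction established by Exel and Pardo in \cite{ExelPardo}: if $e$ is weakly-fixed under $s$ but admits no finite cover by fixed idempotents, one produces an open set of tight filters $F \ni e$ on which $s$ acts nontrivially, contradicting topological freeness.

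For $(3) \Rightarrow (4)$, which I expect to be the main obstacle, I argue by contrapositive. If $S/\!\!\lr$ is not fundamental, then $\mu$ on $S/\!\!\lr$ is nontrivial, and using $\mu \subseteq \mcH$ I reduce to finding $s \in S$ with $s^*s \lr ss^* \lr e$ for some idempotent $e$, with $[s] \neq [e]$ in $S/\!\!\lr$, and $[sfs^*] = [f]$ for every idempotent $f \leq e' := s^*s$. The first key step is that $e'$ is weakly-fixed under $s$: for $0 \neq f \leq e'$, the relation $sfs^* \lr f$ (both sides being idempotents, using $f \leq s^*s$) furnishes a nonzero common lower bound $g$, and since $sfs^*$ and $f$ are commuting idempotents, $g \leq sfs^* \cdot f$, so $sfs^*f \neq 0$. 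Applying (3), we obtain a finite cover $\{f_1, \ldots, f_n\}$ of $e'$ by fixed idempotents $sf_i = f_i$ (note $f_i \leq s$). The second key step is to derive $s \lr e'$. For $s \rightarrow e'$: given $0 < x \leq s$, write $x = sg$ with $g = x^*x \leq e'$; the cover yields $gf_i \neq 0$ for some $i$, and a short calculation using $sf_i = f_i$ shows $y := gf_i = s(gf_i) = xf_i$ is a nonzero common lower bound of $x$ and $e'$. For $e' \rightarrow s$: given $0 < x \leq e'$, the cover yields $xf_i \neq 0$, with $xf_i \leq x$ and $xf_i \leq f_i \leq s$. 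Hence $[s] = [e']$ is idempotent, contradicting idempotent-separation of $\mu$ on $S/\!\!\lr$ together with $[s] \neq [e] = [e']$.

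Finally, for $(4) \Rightarrow (1)$ with $S$ Hausdorff, Theorem~\ref{thm:groupoiddoublearrow} gives $\Gt(S) \cong \Gt(S/\!\!\lr)$, so it suffices to show $\Gt(S/\!\!\lr)$ is effective. By (4), $S/\!\!\lr$ is fundamental, and by Proposition~\ref{prop:0disjunctive}(1) its semilattice is $0$-disjunctive; applying \cite[Corollary~5.5]{LaLondeMilan} to $S/\!\!\lr$ then gives the desired effectiveness. The crux of the whole argument is the algebraic chase in $(3) \Rightarrow (4)$: producing the right candidate $(s, e')$ via the $\mcH$-containment of $\mu$, verifying the weakly-fixed condition, and then extracting $s \lr e'$ from the cover by fixed idempotents via the key identity $s \cdot gf_i = gf_i$ whenever $sf_i = f_i$ and $g \leq s^*s$.
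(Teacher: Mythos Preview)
Your proposal is correct and follows essentially the same approach as the paper: both cite Exel--Pardo for $(1)\Leftrightarrow(2)\Rightarrow(3)$, both handle $(4)\Rightarrow(1)$ via Theorem~\ref{thm:groupoiddoublearrow}, Proposition~\ref{prop:0disjunctive}(1), and \cite[Corollary~5.5]{LaLondeMilan}, and for $(3)\Rightarrow(4)$ both reduce to an element $[s]$ that is $\mu$-related to an idempotent class, verify that the relevant idempotent is weakly-fixed under $s$, apply (3) to obtain a cover by fixed idempotents, and use it to show $s\lr e'$. The only minor differences are cosmetic: the paper reduces to $e\leq s^*s$ while you work directly with $e'=s^*s$, and your verification of $s\rightarrow e'$ via $s(gf_i)=gf_i$ is slightly more direct than the paper's detour through $ts^*\lr et^*t$.
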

\begin{proof}
	It was proved in \cite[Theorem 4.7]{ExelPardo} that $(1) \Leftrightarrow (2)$ and in \cite[Theorem 4.10]{ExelPardo} that $(2) \Rightarrow (3)$. We begin with the
	proof that $(3) \Rightarrow (4)$.

	Suppose (3) holds. We denote the equivalence class of an element $s \in S$ under $\lr$ by $[s]$. To prove that $S /\!\! \lr$ is fundamental, it suffices to 
	show that if $[s] \mcmu [e]$ for some idempotent $e$, then $[s] = [e]$. Since $[s] \mcmu [e]$ implies $[e] = [s^*s] = [es^*s]$, we may assume $e \leq s^*s$. Suppose 
	$[s] \mcmu [e]$ for some nonzero $s \in S$ and some idempotent $e \leq s^*s$. Note that $e \neq 0$ as $\lr$ is $0$-restricted. Then $sfs^* \lr ef$ for 
	every idempotent $f \in E$. In particular, for all nonzero $f \leq e$ we have $sfs^* \lr f$. Thus $sfs^*f \neq 0$, so $e$ is weakly-fixed under $s$. Therefore 
	there are idempotents $f_1, \dots, f_n$ such that $e \rightarrow \{f_1, f_2, \dots, f_n\}$ and $sf_i = f_i$ for each $i$. If we let $0 < t \leq s$, then
	\[
		ts^* = (st^*t)s^* \lr et^*t
	\]
	since $[s] \mcmu [e]$. Now $st^*t = t \neq 0$ implies $st^* \neq 0$, so $ts^* \neq 0$. It follows that $0 \neq et^*t \leq e$. Then $et^* tf_i \neq 0$ for some $i$. Notice that 
	$e t^* t f_i t = e t^* t f_i s t^* t = e t^* t f_i$. Thus $e t^* t f_i$ is a nonzero idempotent below both $t$ and $e$, so $s \rightarrow e$.

	Next let $0 < f \leq e$. Then there exists $f_i$ such that $f f_i \neq 0$. But then $(f f_i)s = f f_i \neq 0$, so $e \rightarrow s$. Hence $s \lr e$, and we have shown that 
	$[s] \mcmu [e]$ implies $[s] = [e]$. It follows that $S /\!\! \lr$ is fundamental. 

	Now assume $S$ is Hausdorff and $S /\!\! \lr$ is fundamental. Since $E(S / \!\!\lr)$ is $0$-disjunctive, it follows from \cite[Corollary 5.5]{LaLondeMilan} and Theorem 
	\ref{thm:groupoiddoublearrow} that $\Gt(S) \cong \Gt(S / \!\! \lr)$ is effective. 
\end{proof}

If $S$ is the inverse semigroup of a directed graph $\Lambda$, then Theorem \ref{thm:condL} shows that $S/\!\!\lr$ is fundamental if and only if $\Lambda$
satisfies Condition (L). In light of this observation, we now make the following definition.

\begin{defn}
	Let $S$ be an inverse semigroup with zero. We say that $S$ \emph{satisfies Condition (L)} if $S / \!\!\lr$ is fundamental.
\end{defn}

\section{Ideals and Groupoid Reductions}

In this section we analyze the ideals in an inverse semigroup $S$, and we begin our attempt to determine when (if ever) there are bijective correspondences between 
certain ideals in $S$ and invariant open subsets of the character spaces $\fE$ and $\tfE$. As a starting point, it is shown in \cite[Lemma 2.14]{BussExelMeyer} that 
ideals in $E(S)$ correspond to open subsets of the filter space $\fE$, where $\fE$ is equipped with a nonstandard (and non-Hausdorff) topology. Therefore, we should not expect a bijection in general, but it may still be possible to associate open sets in $\fE$ to ideals in $S$ in a meaningful way.

We should also note that the relationship between the ideal structure of an inverse semigroup and its groupoids has been studied by Lenz in \cite{lenz}. In particular, compare Theorem \ref{thm:tightideals} below with \cite[Lemma 7.7]{lenz}. We point out that \cite[Lemma 7.7]{lenz} and many related results in \cite{lenz} require that meets exist in $S$.

Let $S$ be an inverse semigroup with idempotent semilattice $E = E(S)$. 
By following the lead of \cite{BussExelMeyer}, we can use basic open sets of the form $N^e$ and $N_e$ to develop a relationship between ideals in $E$ and subsets 
of $\fE$.

\begin{defn}
	Let $A \subseteq \fE$ be a closed set. We define the \emph{kernel} of $A$ to be the set
	\[
		k(A) = \{ e \in E : e \not\in F \text{ for all } F \in A \}.
	\]
	Given an order ideal $X \subseteq E$, the \emph{hull} of $X$ is
	\[
		h(X) = \bigcap_{e \in X} N_e.
	\]
\end{defn}

\begin{rem}
	Notice that the hull of an ideal is automatically closed. Therefore, we could just as well associate an open set to $X$ by considering the complement 
	$\bigcup_{e \in X} N^e$. However, it will be more convenient and natural to work with closed sets for the time being.
\end{rem}

If $S$ does not contain a 0, then it is possible that $k(A)$ is empty for certain closed sets $A \subseteq \fE$. In particular, $k(\fE) = \emptyset$. We ultimately want $k(A)$ 
to be an ideal in $E$, so we will skirt this issue by assuming \emph{all inverse semigroups contain a 0 element}. If $S$ does not contain a 0, we will adjoin a trivial zero to it 
(i.e., we will replace $S$ with $S \cup \{0\}$). Note that the only effect on the ideal structure is the introduction of the ideal $\{0\}$.

The following proposition is known (it appears implicitly, for example, in \cite{SteinbergSimple}), but we include the short proof.

\begin{prop}
\label{prop:kernel}
	For any set $A \subseteq \fE$, 
	\[
		X = \{ e \in E : e \not\in F \text{ for all } F \in A \}
	\]
	is an ideal of $E$. Consequently, $A \mapsto k(A)$ defines a map from closed subsets of $\fE$ to ideals of $E$.
\end{prop}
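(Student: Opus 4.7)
The plan is to verify directly that the set $X = \{ e \in E : e \notin F \text{ for all } F \in A\}$ satisfies the two defining properties of an order ideal in the meet semilattice $E$: it is nonempty (or equivalently contains $0$), and it is closed under multiplication by arbitrary elements of $E$ (which for a meet semilattice amounts to downward closure under $\leq$).

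First I would observe that $0 \in X$ automatically, since by convention $\fE$ consists only of filters that do \emph{not} contain $0$. Thus $0 \notin F$ for every $F \in A \subseteq \fE$, and in particular $X$ is nonempty.

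Next, I would show that $X$ is downward closed. Suppose $e \in X$ and $f \in E$ with $f \leq e$. I must check that $f \in X$, i.e.\ that $f \notin F$ for every $F \in A$. If on the contrary some $F \in A$ contained $f$, then since $F$ is a filter and hence closed upward in the partial order on $E$, the relation $f \leq e$ would force $e \in F$, contradicting $e \in X$. Hence $f \in X$, so $X$ is downward closed.

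Finally, I would note that downward closure is equivalent to $EX \subseteq X$: for any $e \in E$ and $x \in X$ we have $ex \leq x$, so $ex \in X$; conversely, closure under multiplication recovers downward closure because $f \leq x$ implies $f = fx \in EX$. Thus $X$ is an ideal of $E$, and the assignment $A \mapsto k(A)$ therefore defines a map from (closed) subsets of $\fE$ into the ideals of $E$. There is really no obstacle here: the content of the argument is just that filters are upward closed and do not contain $0$.
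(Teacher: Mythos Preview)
Your proof is correct and follows essentially the same approach as the paper: both arguments reduce to the observation that filters are upward closed, so $ef \leq e$ and $e \notin F$ force $ef \notin F$. The only difference is that you explicitly verify $0 \in X$, which the paper omits (relying on the standing assumption that $0 \in S$ and $0 \notin F$ for any $F \in \fE$).
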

\begin{proof}
	Let $e \in X$ and $f \in E$. Since $ef \leq e$, we must have $ef \not\in F$ for any filter $F$ that does not contain $e$. Hence $e \in X$ implies $ef \not\in F$ for all $F \in A$,
	or $ef \in X$. Therefore, $X$ is an ideal.
\end{proof}

We have already observed that the hull of an ideal is a closed set. However, we cannot realize every closed subset of $\fE$ in this way---we can only obtain sets whose
complements look like $\bigcup_{e \in X} N^e$ for some $X \subseteq E$. Obviously our open sets could be much more complicated, so the hull map $h$ 
will rarely be surjective. However, $h$ is always injective, so we have a correspondence between ideals in $E$ and certain closed sets in $\fE$.

\begin{prop}
	For all ideals $X \subseteq E$, we have $k(h(X)) = X$.
\end{prop}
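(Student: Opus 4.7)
The plan is to prove the two containments $X \subseteq k(h(X))$ and $k(h(X)) \subseteq X$ separately. The first is immediate from the definitions: if $e \in X$, then every filter $F \in h(X) = \bigcap_{e' \in X} N_{e'}$ satisfies $e \notin F$, so $e \in k(h(X))$ by the defining property of the kernel.

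For the reverse containment I would argue contrapositively, producing for each $e \in E \setminus X$ a filter $F \in h(X)$ that contains $e$. The natural candidate is the principal filter $F = e^\up = \{f \in E : f \geq e\}$. One first observes that $e \neq 0$, since every ideal of $E$ contains $0$ while $e \notin X$; hence $0 \notin e^\up$. The set $e^\up$ is nonempty (it contains $e$), closed upward by definition, and closed under products: if $f, g \geq e$ then $(fg)e = f(ge) = fe = e$, so $fg \geq e$. Therefore $e^\up \in \fE$.

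To finish I would verify that $e^\up$ actually lies in $h(X)$, which amounts to showing $e^\up \cap X = \emptyset$. Suppose for contradiction that some $f \geq e$ belongs to $X$. Since $X$ is an ideal of the meet semilattice $E$, we then have $e = ef \in X$, contradicting $e \notin X$. Hence $e^\up \cap X = \emptyset$, so $e^\up \in N_{e'}$ for every $e' \in X$, i.e., $e^\up \in h(X)$. Combined with $e \in e^\up$, this witnesses $e \notin k(h(X))$, completing the proof.

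There is no real obstacle to the argument; the only point to keep in mind is that an ideal of the meet semilattice $E$ is automatically downward closed in the natural partial order (via $e = ef$ whenever $e \leq f$ and $f \in X$), and this is exactly what powers the final step.
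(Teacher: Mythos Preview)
Your proof is correct and follows essentially the same approach as the paper: both directions are handled identically, with the reverse containment established by exhibiting the principal filter $e^\uparrow$ as a member of $h(X)$ containing $e$ whenever $e \notin X$. Your version is slightly more explicit in verifying that $e^\uparrow$ is a genuine filter in $\fE$ (in particular that $e \neq 0$), but the argument is the same.
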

\begin{proof}
	Let $X \subseteq E$ be an ideal, and note that
	\[
		k(h(X)) = \bigl\{ e \in E : e \not\in F \text{ for all } F \in h(X) \bigr\}.
	\]
	It is worth noting that
	\[
		h(X) = \bigl\{ F \in \fE : X \cap F = \emptyset \bigr\}.
	\]
	Therefore if $e \in X$, then $e \not\in F$ for all $F \in h(X)$. Hence $X \subseteq k(h(X))$. On the other hand, suppose $e \in k(h(X))$. Then $e \not\in F$ for all filters $F$
	satisfying $X \cap F = \emptyset$. Observe that if $e \not\in X$, then $e^\uparrow$ is a filter that does not meet $X$ (since $f \in {e^\uparrow} \cap X$ implies $e \in X$) but
	contains $e$, which is a contradiction. Therefore, $e \in X$, and we have $k(h(X)) \subseteq X$.
\end{proof}

\begin{rem}
	As we discussed above, it will not always be the case that $h(k(A)) = A$ for a closed set $A \subseteq \fE$. We always have
	\[
		h(k(A)) = \bigl\{ F \in \fE : F \cap k(A) = \emptyset \bigr\},
	\]
	from which it is easy to see that $A \subseteq h(k(A))$. However, the containment may be strict. For example, let $E$ be the semilattice depicted below,
	
	\begin{center}
	\begin{tikzpicture}
		\node at (0, 0) {$0$};
		\node at (1, 2) {$g$};
		\node at (-1, 1) {$e$};
		\node at (1, 1) {$f$};
		
		\draw (-0.2, 0.2) -- (-.8, .8);
		\draw (1, 1.25) -- (1, 1.75);
		\draw (0.2, 0.2) -- (.8, .8);
	\end{tikzpicture}
	\end{center}
	
	\noindent
	and consider the closed set $N^f = \{ \{f, g\} \}$. Then
	\[
		k(N^f) = \{e, 0\},
	\]
	so
	\[
		h(k(N^f)) = N_e = \bigl\{ \{f, g\}, \{g\} \bigr\},
	\]
	which is not equal to $N^f$.
	
	A more illuminating example is given by the tight spectrum $\tfE \subseteq \fE$. Notice that $\tfE$ is a closed invariant set, and that $k(\tfE) = \{0\}$ since every 
	nonzero idempotent belongs to an ultrafilter. However, $h(\{0\}) = \fE$, so $h(k(\tfE)) = \tfE$ if and only if every filter in $E$ is a tight filter. This observation suggests
	that we should eventually restrict our attention to the tight groupoid if we are to have any hope of obtaining a correspondence between ideals and closed sets of
	filters.
\end{rem}

As part of our investigation, we intend to relate ideals in $S$ to certain reductions of the universal groupoid $\G(S)$. To do so, we will need an appropriate notion of 
invariance for ideals in $E$.

\begin{defn} 
	An ideal $X \subseteq E$ is \emph{invariant} if $s s^* \in X$ implies $s^*s \in X$ for all $s \in S$.
\end{defn}

Notice that if $X$ is an invariant ideal in $E$, $s^*Xs \subseteq X$ for every $s \in S$. To see this, let $e = s^* f s$ for some $f \in X$ and let $x = fs$. Then 
$xx^* = f s s^* f \in X$, hence $e = s^* f s = x^* x \in X$. Also, it is easy to see that if $X \subseteq E$ is any ideal for which $s^* X s \subseteq X$ for all $s \in S$, then 
$X$ is invariant.

Since $S$ acts on the filter space $\fE$, we also have a notion of invariance for subsets of $\fE$. We say a set $A \subseteq \fE$ is \emph{invariant} precisely when it is 
invariant under this action, i.e., if $\beta_s(F) \in A$ whenever $F \in A$ and $F \in N^{s^*s}$.

\begin{prop} 
	An ideal $X \subseteq E$ is invariant if and only if $h(X)$ is an invariant set in $\fE$.
\end{prop}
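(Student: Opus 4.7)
The plan is to prove both directions directly from the definitions of the hull, of invariance for ideals, and of the action $\beta_s(F) = (sFs^*)^\up$. The key formal fact to exploit is the equivalence, already remarked, between the invariance condition ``$ss^* \in X \Rightarrow s^*s \in X$'' and the seemingly stronger $s^* X s \subseteq X$ for all $s$, together with the fact that $h(X) = \{F \in \fE : F \cap X = \emptyset\}$. I also plan to use the elementary observation that if $X$ is an order ideal and $s^*s \notin X$, then no $e \geq s^*s$ can lie in $X$ either, since then $s^*s = e \cdot s^*s$ would be in $X$.

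For the forward direction, suppose $X$ is invariant, let $F \in h(X)$ with $s^*s \in F$, and assume for contradiction that some $e \in X$ belongs to $\beta_s(F) = (sFs^*)^\up$. Then $e \geq sfs^*$ for some $f \in F$, and because $X$ is an order ideal we get $sfs^* \in X$. Setting $t = sf$ gives $tt^* = sfs^* \in X$, so invariance yields $t^*t = fs^*sf = fs^*s \in X$. But $f, s^*s \in F$ forces $fs^*s \in F$, contradicting $F \cap X = \emptyset$. Hence $\beta_s(F) \in h(X)$ and $h(X)$ is invariant.

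For the converse, suppose $h(X)$ is invariant and that some $s \in S$ satisfies $ss^* \in X$; I aim to show $s^*s \in X$ by contradiction. Assuming $s^*s \notin X$, the observation above guarantees that the principal filter $F = (s^*s)^\up$ is disjoint from $X$, so $F \in h(X) \cap N^{s^*s}$. Applying the invariance of $h(X)$ produces $\beta_s(F) \in h(X)$, but then the identity $ss^* = s(s^*s)s^* \in sFs^* \subseteq \beta_s(F)$ places $ss^*$ in $\beta_s(F) \cap X$, a contradiction. Thus $s^*s \in X$ and $X$ is invariant.

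No serious obstacle is anticipated: both directions are short bookkeeping once one writes $h(X)$ as the set of $X$-avoiding filters. The only mildly subtle step is the backward direction, where it is important to verify that $(s^*s)^\up$ really does avoid $X$; this is where the fact that $X$ is closed under \emph{multiplication by arbitrary idempotents} (not just downward) is needed.
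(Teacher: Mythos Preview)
Your proof is correct and follows essentially the same line as the paper's. The forward direction is identical in spirit: both arguments show that if $e \in \beta_s(F) \cap X$ then $sfs^* \in X$ for some $f \in F$, and then push this down to $fs^*s \in F \cap X$; you do this via $t = sf$ and the definition of invariance, while the paper invokes the equivalent formulation $s^*Xs \subseteq X$ directly.

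For the converse there is a minor stylistic difference worth noting. The paper argues that \emph{every} filter containing $s^*s$ must meet $X$ (since $\beta_s(F) \ni ss^* \in X$ forces $\beta_s(F) \notin h(X)$, hence $F \notin h(X)$ by invariance), and then invokes the previously established identity $k(h(X)) = X$ to conclude $s^*s \in X$. You instead work with the single principal filter $(s^*s)^\up$ and argue by contradiction, checking directly that it avoids $X$ whenever $s^*s \notin X$. Your route is slightly more self-contained in that it does not appeal to $k(h(X)) = X$, but the underlying idea is the same.
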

\begin{proof}
	Suppose first that $X \subseteq E$ is an invariant ideal and let $F \in h(X)$. Let $s \in S$ with $s^*s \in F$. We need to show that $e \not\in \beta_s(F)$ for all $e \in X$.
	To the contrary, suppose that $\beta_s(F) \cap X \neq \emptyset$. Then there exist $e \in X$ and $f \in F$ such that $sfs^* \leq e$, which implies $sfs^* \in X$. 
	But then $s^*sf = s^*(sfs^*)s \in X \cap F$, contradicting the assumption that $F \in h(X)$. Therefore, $\beta_s(F) \cap X = \emptyset$, so $\beta_s(F) \in h(X)$ and $h(X)$ is invariant.

	Conversely, suppose that $h(X)$ is an invariant subset of $\fE$, and let $s \in S$ with $ss^* \in X$. Let $F$ be a filter containing $s^*s$. Then $\beta_s(F)$ 
	contains $ss^*$, so $\beta_s(F) \in h(X)^c$. Since $h(X)$ is invariant, $F = \beta_{s^*} (\beta_{s} (F))$ belongs to $h(X)^c$ as well. Thus $N^{s^*s} \subseteq h(X)^c$.
	In other words, $s^*s \not\in F$ for all $F \in h(X)$, so $s^*s \in k(h(X)) = X$. Hence $X$ is invariant.
\end{proof}

The next step in our analysis is to study ideals in $S$, which correspond precisely to invariant ideals in $E$. We omit the proof of the following well known fact.

\begin{prop} 
\label{prop:invariantideals}
	The map $X \mapsto SXS$ gives a one-to-one correspondence between invariant order ideals in $E$ and ideals in $S$, with inverse $I \mapsto I \cap E$.
\end{prop}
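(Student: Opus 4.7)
The plan is to verify four claims in sequence: (i) $SXS$ is an ideal of $S$ whenever $X$ is an invariant order ideal of $E$; (ii) $I \cap E$ is an invariant order ideal of $E$ whenever $I$ is an ideal of $S$; (iii) $S(I \cap E)S = I$ for every ideal $I \subseteq S$; and (iv) $SXS \cap E = X$ for every invariant order ideal $X \subseteq E$. Together these say precisely that the two maps are mutually inverse bijections.

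Claims (i)--(iii) should be routine. Claim (i) is immediate from associativity. For (ii), if $e \in I \cap E$ and $f \leq e$ in $E$, then $f = fe \in SI \subseteq I$, so $I \cap E$ is downward closed; and if $ss^* \in I \cap E$, then $s^*s = s^*(ss^*)s \in SIS \subseteq I$, giving invariance. For (iii), the inclusion $S(I \cap E)S \subseteq I$ is obvious, and the reverse follows by writing each $a \in I$ as $a = (aa^*)\cdot a$ and observing that $aa^* \in I \cap E$.

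The only substantive step is (iv). The inclusion $X \subseteq SXS \cap E$ is immediate from $e = e\cdot e\cdot e$ for $e \in X$. For the reverse, I will take $e \in E$ with a factorization $e = sxt$ where $s,t \in S$ and $x \in X$, and aim to conclude $e \in X$. The plan is to exploit the symmetrization $e = ee^*$ (valid since $e$ is a self-adjoint idempotent) to convert the asymmetric product $sxt$ into something on which the invariance hypothesis can act. Computing
\[
	e = ee^* = (sxt)(sxt)^* = s\bigl(x\,tt^*\,x\bigr)s^*,
\]
and using commutativity of $E$ together with $x^2 = x$ to obtain $x(tt^*)x \leq x$, we get $e \leq sxs^* = (sx)(sx)^*$. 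Since $(sx)^*(sx) = xs^*sx \leq x$ lies in the order ideal $X$, invariance forces $(sx)(sx)^* = sxs^* \in X$, and downward closure of $X$ then yields $e \in X$.

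The main obstacle --- modest, but the only real content --- is step (iv): it is the unique place where \emph{genuine} invariance of $X$ (as opposed to mere order-ideal-ness in $E$) is used, and one must find a way to package the asymmetric product $e = sxt$ so that invariance becomes applicable. The symmetrization trick $e = ee^*$ accomplishes this by producing an element of the form $aa^*$ (namely $a = sx$, up to domination) whose companion $a^*a = xs^*sx$ visibly lies below $x \in X$.
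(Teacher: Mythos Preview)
Your proof is correct. The paper actually omits the proof of this proposition entirely, remarking only that it is ``well known,'' so there is nothing to compare your argument against; your verification of the four claims is sound, and the symmetrization $e = ee^*$ in step (iv) is exactly the right device for bringing invariance to bear. One tiny quibble: in (iii) you write $a = (aa^*)a$ to place $a$ in $S(I\cap E)S$, but this is literally an element of $(I\cap E)S$; since $S$ need not have an identity, you should insert one more factor, e.g.\ $a = (aa^*)(aa^*)a$, to exhibit it in the required three-fold form --- a triviality, but worth making explicit.
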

%
 
Our next goal is to show that $\G(I)$ and $\G(S/I)$ are isomorphic to the reductions of $\G(S)$ to the invariant sets $h(I\cap E)^c$ and $h(I \cap E)$, respectively. First we 
need to characterize the filter spaces of $I$ and $S/I$. Much of what follows is well known to experts and parts have appeared in other places (cf. \cite[Theorem 4.10]{MilanSteinberg}). We include the proofs for completeness.

\begin{prop}
\label{prop:filterhomeo}
	Let $I$ be an ideal of an inverse semigroup $S$.
	\begin{enumerate}
		\item The map $R_I(F) = F \cap I$ defines a homeomorphism of the open set $h(I \cap E)^c \subseteq \fE(S)$ onto $\fE(I)$.

		\item The map $Q_I(F) = F$ defines a homeomorphism of the closed set $h(I \cap E) \subseteq \fE(S)$ onto $\fE(S/I)$.		
	\end{enumerate}
\end{prop}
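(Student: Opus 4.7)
The plan is to exhibit explicit inverses to $R_I$ and $Q_I$ and then to compare basic open sets on each side.

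For part (1), I would first verify that $R_I(F) = F \cap I$ is a nonempty filter in $E(I)$ whenever $F \in h(I \cap E)^c$: nonemptiness is the hypothesis, closure under meets uses that $I$ is an ideal, and upward closure in $E(I)$ is inherited from $E$. The candidate inverse is the map $\phi : \fE(I) \to h(I \cap E)^c$ sending $F'$ to its upward closure $F'^{\uparrow}$ in $E$; one checks $\phi(F')$ is a filter not containing $0$ and lies in $h(I \cap E)^c$ since $\emptyset \neq F' \subseteq F'^{\uparrow} \cap I$. For the mutual inverse identities, $R_I \circ \phi = \id$ follows from upward closure of $F'$ within $E(I)$, while $\phi \circ R_I = \id$ uses the observation that for $e \in F$ and any fixed $f \in F \cap I$, the product $ef$ belongs to $F \cap I$ and lies below $e$. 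Continuity of $R_I$ is immediate, as a basic open of $\fE(I)$ formed from elements $e, e_i \in E(I)$ pulls back to $N(e; e_1, \ldots, e_n) \cap h(I \cap E)^c$. For openness of $R_I$, the image under $R_I$ of a basic open $N(e; e_1, \ldots, e_n) \cap h(I \cap E)^c$ (with possibly $e, e_i \notin E(I)$) is
\[
	\bigl\{F' \in \fE(I) : e \in F'^{\uparrow} \text{ and } e_i \notin F'^{\uparrow} \text{ for all } i\bigr\}.
\]
The condition $e \in F'^{\uparrow}$ equals the union $\bigcup \{N^g \cap \fE(I) : g \in E(I),\ 0 < g \leq e\}$, hence is open. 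The condition $e_i \notin F'^{\uparrow}$ is equivalent to the existence of some $f \in F'$ with $e_i f \notin F'$; rewriting it as a union over $f \in E(I) \setminus \{0\}$ of sets $N^f \cap N_{e_i f} \cap \fE(I)$ (treating the constraint as vacuous when $e_i f = 0$) shows this is open as well.

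Part (2) proceeds via the natural identification $E(S/I) \setminus \{0_{S/I}\} = E(S) \setminus I$, under which the partial orders agree (for $e, f \notin I$, $ef = e$ in $E(S)$ iff $[e][f] = [e]$ in $E(S/I)$). Hence if $F \in h(I \cap E)$, then $F \subseteq E(S) \setminus I$ is simultaneously a filter in $E(S)$ and in $E(S/I)$. Conversely, any filter $F' \in \fE(S/I)$ lifts to a filter in $E(S)$ disjoint from $I$: an element $f \in E(S)$ above some $e \in F'$ cannot lie in $I$, since $e \leq f$ and $f \in I$ would force $e = ef \in I$, contradicting $F' \cap I = \emptyset$. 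Mutual inverses are then immediate. Topologically, a basic open $N([e]; [e_1], \ldots, [e_n])$ of $\fE(S/I)$ with $[e] \neq 0_{S/I}$ pulls back to $N(e; e_{j_1}, \ldots, e_{j_k}) \cap h(I \cap E)$, where we drop the conditions $[e_i] = 0_{S/I}$ (whose non-containment is automatic on $h(I \cap E)$). The same set is the image of this basic open of $h(I \cap E)$ under $Q_I$, so $Q_I$ is simultaneously continuous and open.

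The main obstacle is the openness of $R_I$ in part (1), specifically showing that the condition $e_i \notin F'^{\uparrow}$ for an arbitrary $e_i \in E$ defines an open subset of $\fE(I)$. The naive description as an intersection of sets $N_g \cap \fE(I)$ over $g \in E(I)$ with $g \leq e_i$ is only manifestly closed; the reformulation via a witness $f \in F'$ with $e_i f \notin F'$ is essential to recognize this set as open.
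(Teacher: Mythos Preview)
Your proposal is correct and follows essentially the same approach as the paper: construct the inverse of $R_I$ via upward closure, the inverse of $Q_I$ via the identification $E(S/I)\setminus\{0\}=E(S)\setminus I$, and then compare basic open sets on both sides. The only cosmetic difference is in the openness argument for $R_I$: the paper packages everything into the single formula $R_I\bigl(N(e;e_1,\ldots,e_n)\bigr)=\bigcup_{f\in I\cap E} N(ef;e_1f,\ldots,e_nf)$, whereas you treat the condition $e\in F'^{\uparrow}$ and each condition $e_i\notin F'^{\uparrow}$ as separate open sets and then intersect---same idea, slightly different bookkeeping.
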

\begin{proof}
	It is not hard to see that if $F \in h(I \cap E)^c$, then $F \cap I$ is a filter in $I \cap E$. Suppose then that $F_1, F_2 \in \fE(S)$ with $F_1 \cap I = F_2 \cap I \neq \emptyset$. 
	Let $e \in F_1$. Then for any $f \in F_1 \cap I$, we have $ef \in F_1 \cap I = F_2 \cap I$. Since $ef \leq e$ and $F_2$ is a filter, it follows that $e \in F_2$. Hence 
	$F_1 \subseteq F_2$. A similar argument shows that $F_2 \subseteq F_1$, so $F_1 = F_2$ and $R_I$ is injective. It is clearly surjective, since for any $F \in \fE(I)$ the
	filter $F^\uparrow \in \fE(S)$ satisfies $R_I(F^\uparrow) = F$.
	
	It remains to see that $R_I$ is continuous and open. If $F \in h(I \cap E)^c$ and $e \in I\cap E$, then certainly $e \in F$ if and only if $e \in F \cap I$. It is then easy to 
	check that if $e, e_1, \ldots, e_n \in I\cap E$, then
	\[
		R_I^{-1} \bigl(N(e; {e_1, \ldots, e_n})\bigr) = N(e; {e_1, \ldots, e_n})  \cap h(I\cap E)^c.
	\]
	That is, preimages of basic open sets are open, so $R_I$ is continuous.
	
	Next we show that if $F \in h(I\cap E)^c$, then $e \in F$ if and only if $ef \in F \cap I$ for some $f \in F \cap I$. The forward direction is clear. Suppose then that there exists 
	$f \in F \cap I$ such that $ef \in F \cap I$. Then $ef \leq e$, so $e \in F$. Therefore, if $e, e_1, \ldots, e_n \in E(S)$, then we have $F \in N(e; {e_1, \ldots, e_n})$ if and only 
	if there exists $f \in F \cap I$ such that $F \cap I \in N({ef}; {e_1f, \ldots, e_nf})$. It follows that $R_I$ takes $N(e; {e_1, \ldots, e_n})$ onto the open set 
	$\bigcup_{f \in I\cap E} N({ef}; {e_1f, \ldots, e_nf}) \subseteq \fE(I)$. Therefore $R_I$ is an open map, hence a homeomorphism.
	
	Finally we consider the Rees quotient $S/I$. If $F \in \fE(S)$ with $F \cap I = \emptyset$, then $F \subseteq S \backslash I$, and it is easy to see that $F$
	is a filter in $E(S/I)$. Furthermore, it is evident that for all $F_1, F_2 \in h(I\cap E)$, we have $Q_I(F_1) = Q_I(F_2)$ if and only if $F_1 = F_2$. It is also easy to see that 
	$Q_I$ is surjective. Now notice that the relative basic open sets in $h(I\cap E)$ have the form $N(e; {e_1, \ldots, e_n})$ for idempotents $e, e_1, \ldots, e_n \in S \backslash I$. 
	For such a set we clearly have $F \in N(e; {e_1, \ldots, e_n})$ if and only if $Q_I(F) \in N({e}; {e_1, \ldots, e_n}) \subseteq \fE(S/I)$. Thus $Q_I$ is continuous and open,
	so it is a homeomorphism.
\end{proof}

\begin{thm}
\label{thm:universalreduction}
	Let $I$ be an ideal of an inverse semigroup $S$. 
	\begin{enumerate}
		\item The map $\Phi_I : \G(S) \vert_{h(I\cap E)^c} \to \G(I)$ given by 
			\[
				\Phi_I([s, F]) = [se, R_I(F)]
			\]
			for any $e \in F \cap I$ defines an isomorphism of topological groupoids.
		\item The map $\Phi^I : \G(S) \vert_{h(I\cap E)} \to \G(S/I)$ given by 
			\[
				\Phi^I([s, F]) = [s, q(F)]
			\]
			is an isomorphism of topological groupoids.
	\end{enumerate}
\end{thm}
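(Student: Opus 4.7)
The plan is to prove parts (1) and (2) in parallel, with part (1) being the more technical one; part (2) follows from the same ideas but with $Q_I$ playing the role of the identity on filters. For each map, I would verify in turn: well-definedness, the groupoid homomorphism property, bijectivity, and the homeomorphism property.

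For (1), well-definedness of $\Phi_I([s, F]) = [se, R_I(F)]$ is checked as follows. The element $se$ lies in $I$ because $e \in I$, and $(se)^*(se) = es^*se$ lies in $R_I(F)$ since both $s^*s$ and $e$ lie in $F$ and $e \in I$. Independence of the choice of $e$: if $e, e' \in F \cap I$, then $ee' \in R_I(F)$ is a common witness giving $(se)(ee') = (se')(ee')$. Independence of the representative: if $(s, F) \sim (s', F)$ with witness $f \in F$, then $fe \in R_I(F)$ witnesses $(se)(fe) = (s'e)(fe)$.

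The main obstacle will be checking that $\Phi_I$ is a homomorphism, and specifically the identity
\[
    R_I(\beta_s(F)) = \beta_{se}(R_I(F)) \quad \text{in } \fE(I).
\]
Using commutativity of idempotents in $E(S)$ one computes $(se)h(se)^* = s(eh)s^*$ for $h \in R_I(F)$, and since $eh \in R_I(F)$, the inclusion $\beta_{se}(R_I(F)) \subseteq R_I(\beta_s(F))$ is immediate. For the reverse inclusion, given $g \in E(I)$ with $g \geq s\alpha s^*$ for some $\alpha \in F$, the element $h = \alpha e \in R_I(F)$ satisfies $s(eh)s^* = s\alpha e s^* \leq s\alpha s^* \leq g$. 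Once this identity is in hand, the composition $\Phi_I([t, \beta_s(F)]) \Phi_I([s, F]) = [(tf)(se), R_I(F)]$ is reconciled with $\Phi_I([ts, F]) = [tse, R_I(F)]$ by exhibiting $\alpha e \in R_I(F)$ as a common witness, using that $f \geq s \alpha s^*$ (WLOG $\alpha \leq s^*s$) forces $fs\alpha = s\alpha$.

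For bijectivity, injectivity combines the bijection of Proposition \ref{prop:filterhomeo}(1) with the observation that if $seh = s'e'h$ for $h \in R_I(F)$, then $g = ee'h \in F$ yields $sg = s'g$ via the intermediate identities $s(ee'h) = s'e'h$ and $s'(ee'h) = seh$. Surjectivity follows by lifting: given $[t, G] \in \G(I)$, the upward closure $F = G^\uparrow \in h(I \cap E)^c$ satisfies $R_I(F) = G$, and $\Phi_I([t, F]) = [t \cdot t^*t, G] = [t, G]$. Continuity and openness follow by verifying that $\Phi_I$ carries $\Theta(s, U) \cap \G(S)\vert_{h(I \cap E)^c}$ onto the corresponding basic open set of $\G(I)$, after possibly refining $U$ so a single $e \in I$ serves uniformly. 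For (2), the map $\Phi^I([s, F]) = [s, Q_I(F)]$ is comparatively easy: on $h(I \cap E)$ the requirement $s^*s \in F$ forces $s \notin I$ (else $s^*s \in F \cap I = \emptyset$), so $s$ descends to $S/I$, and since $Q_I$ is set-theoretically the identity on filters, the homomorphism property reduces to $Q_I(\beta_s(F)) = \beta_s(Q_I(F))$, while bijectivity and the homeomorphism property reduce to Proposition \ref{prop:filterhomeo}(2).
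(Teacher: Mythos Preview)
Your proposal is correct and follows essentially the same route as the paper. The only notable stylistic difference is that the paper silently replaces each representative $[s,F]$ by $[se,F]$ with $se\in I$ at the outset, so that the equivariance statement becomes $R_I(\beta_s^S(F))=\beta_s^I(R_I(F))$ for $s\in I$ and $\Phi_I$ reads simply $[s,F]\mapsto[s,R_I(F)]$; you instead keep a general $s\in S$ and carry the auxiliary idempotent $e\in F\cap I$ through the computation, which makes the well-definedness and homomorphism checks a little longer but equally valid. The two arguments are interchangeable and neither buys anything substantive over the other.
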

\begin{proof}
	Suppose $[s, F] \in \G(S) \vert_{h(I\cap E)^c}$ and $e, f \in F \cap I = R_I(F)$. Then
	\[
		[se, R_I(F)] = [sef, R_I(F)] = [sf, R_I(F)],
	\]
	so $\Phi_I$ is well-defined.

	Now we set about checking that $\Phi_I$ is a groupoid homomorphism. As a first step, we need to know that the map $R_I$ from Proposition
	\ref{prop:filterhomeo} is equivariant with respect to the natural actions $\beta^S$ and $\beta^I$ of $S$ and $I$, respectively, on their filter spaces. Suppose 
	$F \in h(I\cap E)^c$ and let $s \in I$ with $s^*s \in F$. If $e \in \beta_s^S(F) \cap I$, $e \geq sfs^*$ for some $f \in F$. Then $sfs^* \in I$, so
	\[
		s^*(sfs^*)s = s^*s f s^*s = s^*s f
	\] 
	belongs to $F \cap I$. Hence
	\[
		s(s^*sf)s^* = (ss^*s) f s^* = sfs^*
	\]
	lies in $\beta_s^I(F \cap I)$, which implies that $e \in \beta^I_s(F \cap I)$. Thus $\beta_s^S(F) \cap I \subseteq \beta_s^I(F \cap I)$. On the other hand, suppose $e \in I\cap E$
	and $e \in \beta^I_s(F \cap I)$. Then $e \geq sfs^*$ for some $f \in F \cap I \subseteq F$, meaning that $e \in \beta_s^S(F)$. It follows that $\beta_s^I(F \cap I) \subseteq 
	\beta_s^S(F) \cap I$. It is now easy to see that
	\begin{align*}
		\Phi_I([t, \beta_s^S(F)]) \Phi_I([s, F]) &= [t, \beta_s^I(F \cap I)][s, F \cap I] \\
			&= [ts, F \cap I] \\
			&= \Phi_I([ts, F])
	\end{align*}
	so $\Phi_I$ is a homomorphism. Now suppose $\Phi_I([s, F]) = \Phi_I([t, F'])$. Then we have $[s, F \cap I] = [t, F' \cap I]$, so $F \cap I = F' \cap I$, which in turn 
	implies that $F = F'$ by Proposition \ref{prop:filterhomeo}. Moreover, there exists $e \in F \cap I \subseteq F$ such that $se = te$, so $[s, F] = [t, F]$. Hence
	$\Phi_I$ is injective. It is clearly surjective, so it implements an algebraic isomorphism between $\G(S) \vert_{h(I\cap E)^c}$ and $\G(I)$.

	Now suppose $\Theta^S(s, U)$ is a basic open set in $\G(S) \vert_{h(I\cap E)^c}$, where $s \in I$ and $U \subseteq \fE(S)$ is an open set satisfying $U \subseteq h(I\cap E)^c$.
	Then it is not hard to see that
	\[
		\Phi_I(\Theta^S(s, U)) = \bigr\{ [s, F \cap I] \in \G(I) : F \cap I \in R_I(U) \bigr\} = \Theta^I(s, R_I(U)),
	\]
	which is a basic open set in $\G(I)$. Therefore, $\Phi_I$ takes basic open sets in $\G(S) \vert_{h(I\cap E)^c}$ to basic open sets in $\G(I)$, so it is a homeomorphism.

	Now we turn to $\Phi^I$. Note that $F \in h(I\cap E)$ and $s^*s \in F$ together imply that $s \in S \backslash I$, so the definition of $\Phi^I$ makes sense. Now we claim 
	that for all $F \in h(I\cap E)$ and $s \in S$ with $s^*s \in F$, we have $Q_I(\beta_s^S(F)) = \beta_{s}^{S/I}(Q_I(F))$. First let $e \in \beta_s^S(F)$,
	so $e \geq sfs^*$ for some $f \in F$. Note that $sfs^* \not\in I$ (since $I \cap E$ is invariant), so $e \not\in I$ and we still have $e \geq sfs^*$ in $S/I$.
	In other words, $e \in \beta_{s}^{S/I}(Q_I(F))$. The other containment is done similarly. Hence the map $Q_I : h(I\cap E) \to \fE(S/I)$ from Proposition \ref{prop:filterhomeo} 
	is equivariant. It is now easy to see that
	\begin{align*}
		\Phi^I([t, \beta_s^S(F)]) \Phi^I([s, F]) &= [t, \beta_{q(s)}^{S/I}(Q_I(F))][s, Q_I(F)] \\
			&= [ts, Q_I(F)] \\
			&= \Phi^I([ts, F]) \\
			&= \Phi^I([t, \beta_s^S(F)][s, F]),
	\end{align*}
	so $\Phi^I$ is a homomorphism. Next observe that if $\Phi^I([s, F]) = \Phi^I([t, F'])$, then $[s, Q_I(F)] = [t, Q_I(F')]$, so $F = F'$ since $Q_I$ is injective. Moreover,
	there exists $e \in Q_I(F) = F$ such that $se = te$. Hence $[s, F] = [t, F]$, so $\Phi^I$ is injective. It is clearly surjective. Finally, it is easy to see that if $\Theta^S(s, U)$ 
	is a basic open set in $\G(S) \vert_{h(I\cap E)}$, then $\Phi^I(\Theta^S(s, U)) = \Theta^{S/I}(s, Q_I(U))$, so $\Phi^I$ is a homeomorphism.
\end{proof}

\subsection{Invariant order ideals and the tight filter space of $E$}

We saw in Proposition \ref{prop:invariantideals} that ideals in $S$ correspond to invariant order ideals in $E$, which in turn correspond to certain closed invariant sets of 
filters. However, it is not the case that every closed invariant set in $\fE$ arises from an invariant order ideal. 

The situation is much nicer in the tight filter space of $E$, at least if we make a couple of finiteness assumptions on $E$. Under such assumptions, we are able to develop 
a correspondence between open invariant sets of tight filters and certain invariant order ideals in $E$. 
We need a few definitions in order to lay out the appropriate hypotheses. Following Exel and Pardo, an {\it outer cover} for an order ideal $X \subseteq E$ is a set $C \subseteq E$ such 
that for every nonzero $f \in X$ there exists $c \in C$ such that $cf \neq 0$. If $C$ is an outer cover for $X$ with $C \subseteq X$ then $C$ is called a {\it cover} for $X$. 
Note that $e \covby \{e_1, e_2, \dots, e_n\}$ if and only if $\{e_1, e_2, \dots, e_n\}$ is a cover for $e^\down$. Also, for $f \in E$ we define $f^{\perp} = \{e \in E: ef = 0\}$. The following definition was introduced by Lenz in \cite{lenz}.

\begin{defn} A meet semilattice $E$ has the \emph{trapping condition} if for all $e,f \in E \setminus \{0\}$ where $f<e$, there exists $e_1, e_2, \dots, e_m$ such that 
\[
e \covby \{e_1, \dots, e_m, f\} \text{ and } e_i \in e^\down \cap f^{\perp}.
\] 
\end{defn}

Note in the above definition it is possible that $e^\down \cap f^{\perp} = \{0\}$, in which case $e \covby f$ and the semilattice fails to be $0$-disjunctive. 
Thus our definition of the trapping condition is slightly more general than the one given in \cite{LawsonCompactable}.

\begin{defn} Let $S$ be an inverse semigroup. We say that $E(S)$ \emph{admits finite covers} if for each invariant order ideal $X$ of $E$ there exists a finite cover $C$ of $X$.
\end{defn}

\begin{exmp}\label{exmp:directedgraphs} For the inverse semigroup $\mcS_{\Lambda}$ of a finite graph $\Lambda$, the semilattice admits finite covers and satisfies the trapping condition. Suppose that $X$ is an invariant order ideal of the semilattice $E(\mcS_{\Lambda}) = \{(\alpha,\alpha) : \alpha \in \Lambda^*\}$. Notice that
\[(\alpha, s_{\alpha})(\alpha, s_{\alpha})^* = (\alpha, \alpha) \in X \text{ implies } (s_{\alpha}, s_{\alpha}) = (\alpha, s_{\alpha})^*(\alpha, s_{\alpha}) \in X.
\] 
Thus $X$ is generated by the idempotents $\{ (s_{\alpha},s_{\alpha}) : (\alpha,\alpha) \in X\}$. Since there are finitely many vertices, $S_{\Lambda}$ admits finite covers. The proof that $S_{\Lambda}$ has the trapping condition relies only on the fact that $\Lambda$ is \emph{row-finite}, that is each vertex in $\Lambda$ has finite in-degree. Let $(\beta, \beta) < (\alpha, \alpha)$. Then $\beta$ is an extension of $\alpha$. That is, $\beta = \alpha \alpha'$ for some path $\alpha'$. If $(\gamma, \gamma) < (\alpha, \alpha)$, then either $(\gamma, \gamma) \leq (\beta, \beta)$ or $\gamma$ is an extension of a path $\alpha \alpha_f' f$ where $f$ is an edge with range on $\alpha'$, $\alpha_f'$ is a subpath of $\alpha'$, and $(\alpha \alpha_f' f, \alpha \alpha_f' f)(\beta, \beta) = 0$. There are finitely many such edges, call them $\{f_1, \dots, f_m\}$ and let $e_i = (\alpha \alpha_{f_i}' f_i, \alpha \alpha_{f_i}' f_i)$ for each $i$. Then $e_i (\beta, \beta) = 0$ for each $i$ and
\[ 
(\alpha, \alpha) \covby \{e_1, e_2, \dots, e_m, (\beta, \beta)\}.
\]
Thus $S_{\Lambda}$ satisfies the trapping condition.
\end{exmp}

\begin{lem}
\label{lem:emptybasic} 
	Let $E$ be a meet semilattice, and suppose $e \in E$ and $e_i \leq e$ for $1 \leq i \leq n$. Then $D(e; e_1,e_2, \dots, e_n) = \emptyset$ if and only if 
	$e \covby  \{e_1, e_2, \dots, e_n\}$. Moreover, if $E$ satisfies the trapping condition and $F \in D(e; e_1,e_2, \dots, e_n)$, then there exists $x \in F$ such that 
	$x \leq e$ and $xe_i = 0$ for $1 \leq i \leq n$.
\end{lem}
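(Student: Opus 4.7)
The plan is to handle the iff and the refinement separately, the iff being a direct consequence of the Lawson--Exel tight-filter characterization recalled before Proposition \ref{prop:tighthomeo}, and the refinement requiring a more delicate use of the trapping condition.

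For the forward direction of the iff, suppose $e \covby \{e_1,\dots,e_n\}$. Any $F \in D(e;e_1,\dots,e_n)$ is tight and contains $e$, so the cited characterization forces $e_i \in F$ for some $i$, contradicting $F \in D(e;e_1,\dots,e_n)$. Hence $D(e;e_1,\dots,e_n) = \emptyset$.

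For the backward direction, assume $e$ is not covered by $\{e_1,\dots,e_n\}$. Then by definition there is a nonzero $f \leq e$ with $fe_i = 0$ for all $i$. The principal filter $f^\up$ does not contain $0$, so by Zorn's lemma it extends to an ultrafilter $F_0$; ultrafilters are tight by the definition of $\tfE$. Since $f \leq e$ we have $e \in F_0$, and if $e_i$ were in $F_0$ then the product $fe_i = 0$ would be in the filter $F_0$, a contradiction; so $F_0 \in D(e;e_1,\dots,e_n)$.

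For the moreover part, fix $F \in D(e;e_1,\dots,e_n)$. Since $e \in F$ and $e_i \notin F$, no $e_i$ equals $e$; so for each $i$ with $e_i \neq 0$ we have $e_i < e$. Applying the trapping condition to each such pair produces idempotents $g^i_1,\dots,g^i_{m_i} \in e^\down \cap e_i^\perp$ with $e \covby \{g^i_1,\dots,g^i_{m_i}, e_i\}$. Tightness of $F$ combined with $e \in F$ and $e_i \notin F$ forces some $g^i_{j_i}$ to lie in $F$. Define $x$ to be the meet of these chosen $g^i_{j_i}$ over all $i$ with $e_i \neq 0$ (take $x = e$ if this index set is empty). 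Then $x \in F$ since filters are closed under meets, $x \leq e$, and $x e_i \leq g^i_{j_i} e_i = 0$ for each such $i$ (with $xe_i = 0$ trivially when $e_i = 0$). The main obstacle is arranging a single $x \in F$ orthogonal to \emph{all} the $e_i$ at once; the trapping condition is precisely what lets us reduce this to finitely many independent choices, and tightness then supplies an element of $F$ at each stage.
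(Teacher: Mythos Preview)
Your proof is correct and follows essentially the same approach as the paper's: both directions of the iff are argued via the tight-filter characterization together with the existence of ultrafilters containing a given nonzero element, and the ``moreover'' is obtained by applying the trapping condition to each $e_i$ and multiplying the resulting witnesses. If anything, you are slightly more careful than the paper in explicitly noting why $0 < e_i < e$ (handling $e_i = 0$ and $e_i = e$ separately), whereas the paper simply asserts $e_i < e$.
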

\begin{proof}
	Suppose $D(e; e_1,e_2, \dots, e_n) = \emptyset$ and let $0 \neq x \leq e$. Then there exists a tight filter $F$ containing $x$ and hence $e$. Since 
	$D(e; e_1,e_2, \dots, e_n) = \emptyset$, we have $e_i \in F$ for some $i$. Thus $x e_i \neq 0$, so $e \covby  \{e_1, e_2, \dots, e_n\}$. Conversely, suppose 
	$e \covby  \{e_1, e_2, \dots, e_n\}$. If $F$ is a tight filter containing $e$, then $e_i \in F$ for some $i$. Hence $D(e; e_1,e_2, \dots, e_n) = \emptyset$.

	Finally, assume $E$ has the trapping condition and $F \in D(e; e_1,e_2, \dots, e_n)$ for some tight filter $F$. For each $i$, we have $e_i < e$ and there exist 
	$\{x_1, \dots, x_m\}$ with $x_j \in\, e^\down \cap e_i^\perp$ for $1 \leq j \leq m$ and $e \covby \{e_i, x_1, \dots, x_m\}$. Then $F \not\in D(e; e_i,x_1, \dots, x_m)$, 
	so there exists $f_i \in \{x_1, \dots, x_m\}$ such that $f_i \in F$ and $f_i e_i = 0$. If $x = f_1 f_2 \cdots f_n$, then $x \in F$, $x \leq e$, and $xe_i = 0$ for all $i$.
\end{proof}

\begin{defn} 
	An ideal $X$ of $E$ is {\it saturated} if $e \covby \{e_1, \dots, e_n \}$ and $e_i \in X$ for $1 \leq i \leq n$ implies $e \in X$. We say an ideal $I$ of $S$ is 
	{\it saturated} if $I \cap E$ is saturated.
\end{defn}

One can quickly prove that an ideal $I$ of $S$ is saturated if and only if $s \covby \{s_1, \dots, s_n \}$ and $s_i \in I$ for $1 \leq i \leq n$ implies $s \in I$. We now show that there is a correspondence between closed invariant sets in the space of tight filters and saturated invariant ideals of idempotents.

\begin{prop} 
	Suppose $E$ is a meet semilattice and let $A$ be a closed subset of $\tfE$. Then $k(A)$ is a saturated ideal in $E$.
\end{prop}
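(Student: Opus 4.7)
The plan is to note that Proposition \ref{prop:kernel} already gives that $k(A)$ is an order ideal, so the only content left is saturation. I would argue saturation by contradiction, using the characterization of tight filters recalled just before Proposition \ref{prop:tighthomeo}: a filter $F$ is tight if and only if $e \covby \{e_1,\ldots,e_n\}$ and $e \in F$ forces $e_i \in F$ for some $i$.

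More concretely, suppose $e \covby \{e_1,\ldots,e_n\}$ with $e_i \in k(A)$ for each $i$, and assume for contradiction that $e \notin k(A)$. Then by definition of $k(A)$ there exists some $F \in A$ with $e \in F$. Since $A \subseteq \tfE$, this $F$ is a tight filter, so the Lawson--Exel characterization applied to the cover $e \covby \{e_1,\ldots,e_n\}$ yields some index $i$ with $e_i \in F$. But $e_i \in k(A)$ means $e_i \notin F'$ for every $F' \in A$, and in particular $e_i \notin F$, a contradiction. Hence $e \in k(A)$, and $k(A)$ is saturated.

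There is essentially no obstacle here: closedness of $A$ is not actually used in the argument, only the fact that $A$ consists of tight filters (which is automatic from $A \subseteq \tfE$). The hypothesis ``closed'' is presumably included to match the phrasing of the hull-kernel correspondence being developed, since in the sequel one wants $h(k(A)) = A$ for closed invariant $A$. The proof itself reduces to a one-line application of the tight filter characterization, once one has the preceding Proposition \ref{prop:kernel} in hand.
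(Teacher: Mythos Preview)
Your proof is correct and is essentially the same as the paper's. The paper phrases the contradiction via Lemma \ref{lem:emptybasic}, observing that $e \covby \{e_1,\ldots,e_n\}$ forces $D(e; e_1,\ldots,e_n) = \emptyset$, whereas $e \in F \in A$ and $e_i \in k(A)$ would put $F$ in that set; but the relevant direction of that lemma is exactly the tight-filter characterization you invoke, so the arguments coincide. Your remark that closedness of $A$ is not actually used is also accurate.
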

\begin{proof}
	We already know from Proposition \ref{prop:kernel} that $k(A)$ is an ideal in $E$. Suppose $e \covby \{e_1, \dots, e_n \}$, where $e_i \in k(A)$. If $e \in F$ for some 
	$F \in A$, then $F \in D(e; e_1,e_2, \dots, e_n)$, contradicting the fact that $D(e; e_1,e_2, \dots, e_n)$ is empty. Thus for all 
	$F \in A$, $e \not \in F$. In other words, $e \in k(A)$, hence $k(A)$ is saturated.
\end{proof}

Recall that if $X \subseteq E$ is an invariant order ideal, then $h(X)$ is a closed invariant subset of $\fE$. Consequently, $\htt(X) = h(X) \cap \tfE$ is a closed invariant
subset of $\tfE$. However, the map $X \mapsto \htt(X)$ need not be injective (unlike for $\fE$), since $k(\htt(X))$ is necessarily a saturated ideal.

\begin{thm} 
\label{thm:tightideals}
	Let $S$ be an inverse semigroup with semilattice $E$. Suppose that $E$ satisfies the trapping condition and admits finite covers. Then there is a correspondence between saturated invariant ideals in $E$ and closed invariant subsets of $\tfE$. 
\end{thm}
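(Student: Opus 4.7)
The plan is to verify that the mutually inverse maps $X \mapsto \htt(X) = h(X) \cap \tfE$ and $A \mapsto k(A)$ restrict to a bijection between saturated invariant ideals of $E$ and closed invariant subsets of $\tfE$. First I would check that the maps carry the appropriate structure: $k(A)$ is automatically a saturated ideal by the preceding proposition, and it is invariant because if $s^*s$ lies in some $G \in A$, then $\beta_s(G) \in A$ by invariance of $A$, and $\beta_s(G)$ contains $ss^*$. Conversely, $\htt(X)$ is closed by construction, and invariance of $h(X)$ for invariant $X$ was already established. The real work is in verifying the fixed-point identities $k(\htt(X)) = X$ and $\htt(k(A)) = A$.

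For $k(\htt(X)) = X$, the inclusion $X \subseteq k(\htt(X))$ is immediate, so I would fix $e \notin X$ and produce a tight filter in $\htt(X)$ containing $e$. Here I would invoke the finite cover hypothesis to choose a finite cover $C = \{c_1, \dots, c_n\} \subseteq X$ of $X$. The key observation is that a tight filter $F$ disjoint from $C$ is automatically disjoint from $X$: if $f \in F \cap X$, then $\{fc_1, \dots, fc_n\}$ is a cover for $f^\down$ by the cover property, and tightness forces some $fc_i \in F$, whence $c_i \in F$. It therefore suffices to show that $D(e; c_1, \dots, c_n) = D(e; ec_1, \dots, ec_n)$ is nonempty, and by Lemma \ref{lem:emptybasic} this reduces to checking $e \not\covby \{ec_1, \dots, ec_n\}$. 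Since each $ec_i \in X$, saturation of $X$ delivers precisely this.

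For $\htt(k(A)) = A$, the inclusion $A \subseteq \htt(k(A))$ is again immediate. For the reverse, I would assume $F \notin A$ and derive a contradiction with $F \in \htt(k(A))$. Since $A$ is closed, some basic open neighborhood $D(e; e_1, \dots, e_n)$ of $F$ misses $A$, and after replacing each $e_i$ by $ee_i$ we may take $e_i \leq e$. Here the trapping condition enters through the second half of Lemma \ref{lem:emptybasic}, which yields $x \in F$ with $x \leq e$ and $xe_i = 0$ for every $i$. Any tight filter $G$ containing $x$ then automatically lies in $D(e; e_1, \dots, e_n)$ (since $x \leq e$ forces $e \in G$, while $xe_i = 0$ prevents $e_i \in G$), so $G \notin A$. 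This shows $x \in k(A)$, and $x \in F$ contradicts $F \cap k(A) = \emptyset$.

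The main obstacle is the second identity, which depends crucially on the trapping condition: without it one cannot in general shrink a basic neighborhood $D(e; e_1, \dots, e_n)$ disjoint from $A$ down to a neighborhood of the purely positive form $D^x$ whose defining idempotent $x$ is still in $F$, and it is precisely such simpler neighborhoods that feed elements into $k(A)$. The finite cover hypothesis plays the dual role in the first identity, letting us replace the potentially infinite ideal $X$ by a finite list of excluded idempotents inside a basic open set.
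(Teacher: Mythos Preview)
Your proposal is correct and follows essentially the same approach as the paper. In both identities you argue via the contrapositive where the paper argues directly, but the substantive steps are identical: for $k(\htt(X))=X$ you and the paper both reduce to showing $D(e; ec_1,\dots,ec_n)$ is empty (equivalently nonempty) via the finite cover and Lemma~\ref{lem:emptybasic}, and for $\htt(k(A))=A$ you both use the trapping condition through Lemma~\ref{lem:emptybasic} to extract an $x\in F$ with $D^x$ contained in the given basic neighborhood.
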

\begin{proof}
	First we let $X$ be a saturated invariant ideal in $E$ and we show that $X = k(\htt(X))$. Let $e \in X$. Then $e \not \in F$ for all $F \in \htt(X)$, so $e \in k(\htt(X))$.
	Conversely, suppose $e \in k(\htt(X))$. Then for any tight filter $F$ containing $e$, $F \cap X \neq \emptyset$. Let $\{x_1, \dots, x_n\}$ be a finite cover for 
	$X$, and suppose $F \in D(e; ex_1, \dots, ex_n)$. Given $x \in F \cap X$, we claim that $ex \covby \{e x x_1, \dots, e x x_n\}$. To see it, suppose $f \leq ex$. Then 
	$f \leq x$ and there exists $x_i$ such that $f x_i \neq 0$, so $f(e x x_i) = (fex) x_i = fx_i \neq 0$. Thus $ex \in F$ implies that $exx_i \in F$ for some $i$, hence 
	$ex_i \in F$. This contradicts the assumption that $F \in D(e; ex_1, \dots, ex_n)$. Therefore, $D(e; ex_1, \dots, ex_n) = \emptyset$. By Lemma \ref{lem:emptybasic}, $e \covby \{ex_1, \dots, ex_n\}$. Since $X$ is saturated, $e \in X$.

	Next we show that $\htt(k(A)) = A$ for any closed set of tight filters $A$. Let $F \in A$ and suppose that $e \in k(A)$. Then $e \not \in F$, so $F \in \htt(k(A))$. 
	Conversely, let $F \in \htt(k(A))$. Choose a basic open set $D(e; e_1,e_2, \dots, e_n)$ 
	containing $F$. Since $D(e; e_1,e_2, \dots, e_n)$ is nonempty, there exists $x \in F$ such that $x \leq e$ and $x e_i = 0$ for $1 \leq i \leq n$ by Lemma 
	\ref{lem:emptybasic}. Since $x \in F$, there exists $G \in A$ such that $x \in G$. Then $e \in G$, since $x \leq e$. Also $e_i \not \in G$ for $1 \leq i \leq n$ since 
	$x e_i = 0$. Thus $G \in D(e; e_1,e_2, \dots, e_n)$. Therefore, any open set containing $F$ intersects $A$ nontrivially. Since $A$ is closed, $F \in A$.
\end{proof}

We can also write down analogues of Proposition \ref{prop:filterhomeo} and Theorem \ref{thm:universalreduction} for the tight groupoid of $S$.

\begin{prop}
\label{prop:tightfilterhomeo}
	Let $I$ be a saturated ideal of an inverse semigroup $S$.
	\begin{enumerate}
		\item The map $R_I : h(I \cap E)^c \to \fE(I)$ given by $R_I(F) = F \cap I$ restricts to a homeomorphism of $h(I \cap E)^c \cap \tfE(S)$ onto $\tfE(I)$.

		\item The map $Q_I : h(I \cap E) \to \fE(S/I)$ given by $Q_I(F) = F$ restricts to a homeomorphism of $\htt(I \cap E) \subseteq \tfE(S)$ onto $\tfE(S/I)$.		
	\end{enumerate}
\end{prop}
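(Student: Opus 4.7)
The plan is to leverage Proposition \ref{prop:filterhomeo}, which already provides homeomorphisms $R_I : h(I\cap E)^c \to \fE(I)$ and $Q_I : h(I\cap E)\to\fE(S/I)$ of the universal filter spaces. Since $\htt(I\cap E) = h(I\cap E)\cap\tfE(S)$ is a subspace of $h(I\cap E)$ and $\tfE(I)$, $\tfE(S/I)$ are subspaces of their respective ambients, once we verify set-theoretically that the restrictions of $R_I$ and $Q_I$ map the tight subspaces bijectively onto each other, the topological claims follow immediately. Throughout I will use the cover characterization of tight filters recorded just before Proposition \ref{prop:tighthomeo}. For part (2), I will also need the standing hypothesis of this subsection that $E$ admits finite covers.

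For part (1), the forward direction is almost immediate: if $F\in\tfE(S)$ meets $I$ and $e\covby\{e_1,\ldots,e_n\}$ in $E(I)$ with $e\in F\cap I$, then since $e\in I$ forces every nonzero subelement of $e$ to lie in $I$, the same family covers $e$ in $E(S)$, and tightness of $F$ in $E(S)$ puts some $e_i$ in $F$ --- hence in $F\cap I$. Conversely, given $G$ tight in $E(I)$, let $F=G^\up\subseteq E(S)$; then $F\cap I=G$ and $F\in h(I\cap E)^c$. For any cover $e\covby\{e_1,\ldots,e_n\}$ in $E(S)$ with $e\in F$, pick $g\in G$ with $g\leq e$ and observe that $\{ge_1,\ldots,ge_n\}\subseteq I$ covers $g$ in $E(I)$, so tightness of $G$ produces some $ge_i\in G$, whence $e_i\in F$.

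Part (2) is more delicate because the cover relations in $E$ and $E(S/I)$ do not agree: a nonzero $f\leq e$ lying in $I$ can foil a cover in $E(S)$ while being invisible in $E(S/I)$. For $Q_I(\htt(I\cap E))\subseteq\tfE(S/I)$, assume $F\in\tfE(S)$ with $F\cap I=\emptyset$ and $e\covby\{e_1,\ldots,e_n\}$ in $E(S/I)$ with $e\in F$. Invoking the finite covers hypothesis, fix a finite cover $\{c_1,\ldots,c_k\}\subseteq I\cap E$ of the invariant order ideal $I\cap E$; then $e\covby\{e_1,\ldots,e_n,c_1e,\ldots,c_ke\}$ in $E(S)$, since a nonzero $f\leq e$ outside $I$ meets some $e_i$ by the $E(S/I)$-cover while a nonzero $f\leq e$ in $I$ meets some $c_j$, giving $f(c_je)\neq 0$. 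Tightness of $F$ in $E(S)$ selects some element of this enlarged cover in $F$; the $c_je$ are excluded because $F\cap I=\emptyset$, forcing some $e_i\in F$. For the reverse inclusion, let $G\in\tfE(S/I)$, viewed as a filter in $E(S)$ disjoint from $I$, and take any cover $e\covby\{f_1,\ldots,f_m\}$ in $E(S)$ with $e\in G$. Split $\{f_j\}=A\sqcup B$ by membership in $I$; saturation of $I$ yields $e\covby A$ in $E(S/I)$, since any nonzero $f\leq e$ outside $I$ with $ff_j\in I$ for every $f_j\in A$ would yield the cover $f\covby\{ff_1,\ldots,ff_m\}\subseteq I$ in $E(S)$, forcing $f\in I$ by saturation. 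Tightness of $G$ in $E(S/I)$ then supplies some $f_j\in A\cap G$, so $G$ is tight in $E(S)$. The main obstacle is exactly this cover-relation mismatch, resolved in opposite directions by the finite covers and saturation hypotheses; once the set-level bijections are in place, Proposition \ref{prop:filterhomeo} automatically upgrades them to the desired homeomorphisms.
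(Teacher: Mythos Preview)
Your treatment of part~(1) and of the reverse inclusion in part~(2) matches the paper's. The divergence is in the forward direction of~(2): the paper asserts without further argument that a cover $e\covby\{e_1,\ldots,e_n\}$ in $E(S/I)$ is already a cover of $e$ in $E(S)$, and then invokes tightness of $F$ in $E(S)$ directly; you instead enlarge the cover by adjoining $c_1e,\ldots,c_ke$ coming from a finite cover of $I\cap E$, obtain a genuine cover in $E(S)$, and use $F\cap I=\emptyset$ to discard the extra elements. Your route is the correct one. The paper's assertion fails whenever some nonzero $f\leq e$ lies in $I$ and misses every $e_i$, and in fact the proposition is false as stated: take $E=\{0,a,1\}\cup\{c_n:n\geq1\}$ with $a<1$, $c_n<1$, $ac_n=0$, and $c_nc_m=0$ for $n\neq m$, and $I=\{0\}\cup\{c_n\}$; then $I$ is a saturated ideal, $F=\{1\}$ is a tight filter in $E$ disjoint from $I$ (no finite subset of $\{a,c_1,c_2,\ldots\}$ covers $1$), yet $1\covby\{a\}$ in $E/I=\{0,a,1\}$ while $a\notin F$. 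Two remarks on your fix. First, ``admits finite covers'' is not declared as a standing hypothesis of the subsection---it appears only in Theorem~\ref{thm:tightideals}---so you should flag it explicitly as an added assumption rather than an ambient one. Second, an alternative repair uses only the trapping condition (which is all that the downstream Theorem~\ref{thm:condK} assumes): if no $e_i$ lies in $F$ then $F\in D(e;e_1,\ldots,e_n)$, and Lemma~\ref{lem:emptybasic} furnishes $x\in F$ with $x\leq e$ and $xe_i=0$ for all $i$; since $x\notin I$, this $x$ witnesses the failure of the cover in $E(S/I)$, a contradiction. Either hypothesis suffices, and both are available in every later application.
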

\begin{proof}
	Since $R_I$ is a homeomorphism by Proposition \ref{prop:filterhomeo}, it is sufficient to check that $R_I$ maps $h(I \cap E)^c \cap \tfE(S)$ onto $\tfE(I)$. Suppose
	$F \in \tfE(S)$ and $F \cap I \neq \emptyset$. Let $e \in F \cap I$, and suppose $e \rightarrow \{e_1, e_2, \ldots, e_n\}$, where $e_i \in I$ for 
	$1 \leq i \leq n$. Since $F$ is a tight filter, we must have $e_i \in F$ for some $i$. Thus $e_i \in F \cap I$, and it follows that $F \cap I$ is a tight filter in $I \cap E$.
	
	Now suppose $F \in \tfE(I)$. We observed in the proof of Proposition \ref{prop:filterhomeo} that $R_I^{-1}(F) = F^\up$, and we claim that $F^\up$ is a tight filter. Let 
	$e \in F^\up$, and suppose $e \covby \{e_1, e_2, \ldots, e_n\}$. If we choose $f \in F$ with $f \leq e$, then it is easy to
	check (see the proof of Theorem \ref{thm:tightideals}) that $f \covby \{e_1f, e_2f, \ldots, e_nf\}$. Note that $e_i f \in I \cap E$ for $1 \leq i \leq n$, so
	$ef \in I \cap E$ since $I$ is saturated. Since $F$ is a tight filter, $e_i f \in F$ for some $i$, whence $e_i \in F^\up$. Therefore, $F^\up \in \tfE(S)$. It is now
	evident that $F \mapsto F \cap I$ defines a homeomorphism of $\tfE(S) \backslash \htt(I \cap E)$ onto $\tfE(I)$.
	
	As with $R_I$, we already know that $Q_I$ is a homeomorphism, and we simply need to check that $Q_I(\htt(I \cap E)) = \tfE(S/I)$. Suppose first that $F \in \htt(I \cap E)$.
	Let $e \in Q_I(F)$, and suppose $e \covby \{e_1, e_2, \ldots, e_n\}$ where $e_i \in E(S/I)$ for $1 \leq i \leq n$. We are effectively saying that
	$e \covby \{e_1, e_2, \ldots, e_n\}$ in $E(S)$, so some $e_i \in F$ since $F$ is a tight filter. Hence $Q_I(F) \in \tfE(S/I)$.
	
	Now suppose $F \in \tfE(S/I)$. Let $e \in Q_I^{-1}(F) = F$, and suppose $e \covby \{e_1, e_2, \ldots, e_n\}$ with $e_i \in E$ for $1 \leq i \leq n$. Since $I$ is 
	saturated, it cannot be the case that $e_i \in I \cap E$ for all $i$, as this would force $e \in I$ and $F \cap I = \emptyset$. After possibly relabeling, we may assume 
	$e_1, \ldots, e_m \not\in I$ (where $1 \leq m \leq n$) and $e_{m+1}, \ldots, e_n \in I$ (where it is possible that $m=n$ and no $e_i$ belongs to $I$). We claim that 
	$e \covby \{e_1, e_2, \ldots, e_m\}$ in $E(S/I)$. To see this, suppose $x \in S\backslash I$ with $x \leq e$. Then $x \covby \{e_1x, e_2x, \ldots, e_nx\}$. If 
	$e_i x = 0$ for $1 \leq i \leq m$, then in fact $x \covby \{e_{m+1} x, \ldots, e_nx\}$. But $e_i x \in I$ for $m+1 \leq i \leq n$, which forces $x \in I$ since $I$ is 
	saturated. This is a contradiction, so we must have $xe_i \neq 0$ for some $1 \leq i \leq m$. Hence $e \covby \{e_1, e_2, \ldots, e_m\}$ in $E(S/I)$, so $e_i \in F$ 
	for some $i$ since $F$ is a tight filter in $E(S/I)$. It follows that $Q_I^{-1}(F) \in \tfE(S)$, so $Q_I$ restricts to a homeomorphism from $\htt(I)$ onto $\tfE(S/I)$.
\end{proof}

\begin{thm}
\label{thm:tightreduction}
	Let $I$ be a saturated ideal of an inverse semigroup $S$. 
	\begin{enumerate}
		\item The map $\Phi_I : \Gt(S) \vert_{\tfE(S) \backslash \htt(I \cap E)} \to \Gt(I)$ given by 
			\[
				\Phi_I([s, F]) = [se, R_I(F)]
			\]
			for any $e \in F \cap I$ defines an isomorphism of topological groupoids.
		\item The map $\Phi^I : \Gt(S) \vert_{\htt(I \cap E)} \to \Gt(S/I)$ given by 
			\[
				\Phi^I([s, F]) = [s, q(F)]
			\]
			is an isomorphism of topological groupoids.
	\end{enumerate}
\end{thm}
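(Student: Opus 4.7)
The plan is to obtain both isomorphisms as restrictions of the universal-level isomorphisms already established in Theorem \ref{thm:universalreduction}, controlling the unit spaces via Proposition \ref{prop:tightfilterhomeo}. In particular, the hypothesis that $I$ is saturated is built into the approach precisely because that is what Proposition \ref{prop:tightfilterhomeo} requires.

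I would first observe that by construction $\Gt(S) = \G(S) \vert_{\tfE(S)}$, and since reductions compose, we have
\[
	\Gt(S) \vert_{\tfE(S) \backslash \htt(I \cap E)} = \G(S) \vert_{\tfE(S) \backslash \htt(I\cap E)}, \qquad \Gt(S) \vert_{\htt(I\cap E)} = \G(S) \vert_{\htt(I\cap E)}.
\]
Because $\htt(I \cap E) = h(I \cap E) \cap \tfE(S)$, the first of these is a (topological) subgroupoid of $\G(S) \vert_{h(I\cap E)^c}$, and the second is a subgroupoid of $\G(S) \vert_{h(I\cap E)}$. Similarly, $\Gt(I) = \G(I) \vert_{\tfE(I)}$ and $\Gt(S/I) = \G(S/I) \vert_{\tfE(S/I)}$. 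The candidate maps $\Phi_I$ and $\Phi^I$ in the theorem are nothing other than the restrictions of the universal-level maps of Theorem \ref{thm:universalreduction} to these subgroupoids; in particular, well-definedness, the homomorphism property, and injectivity are all inherited for free.

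For part (1) the only remaining task is to identify the image. The restriction of $\Phi_I$ acts on units by $F \mapsto R_I(F)$, and Proposition \ref{prop:tightfilterhomeo}(1) guarantees that $R_I$ maps $\tfE(S) \backslash \htt(I\cap E)$ homeomorphically onto $\tfE(I)$. Hence the image of the restricted $\Phi_I$ is
\[
	\G(I) \vert_{R_I(\tfE(S) \backslash \htt(I\cap E))} = \G(I) \vert_{\tfE(I)} = \Gt(I).
\]
Since $\Phi_I$ is already a homeomorphism of $\G(S) \vert_{h(I\cap E)^c}$ onto $\G(I)$ and sends the subgroupoid $\Gt(S) \vert_{\tfE(S) \backslash \htt(I\cap E)}$ bijectively onto the subgroupoid $\Gt(I)$, the restriction is a topological groupoid isomorphism. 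Part (2) is identical, substituting $Q_I$ for $R_I$ and invoking Proposition \ref{prop:tightfilterhomeo}(2) to see that $Q_I$ carries $\htt(I\cap E)$ homeomorphically onto $\tfE(S/I)$.

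The only place where any real work could hide is the claim that the $\Phi$-maps descend to the tight level — that is, that tight filters go to tight filters under $R_I$ and $Q_I$, and that this is where saturation of $I$ is essential. But this content is exactly what Proposition \ref{prop:tightfilterhomeo} delivers, so once that proposition is in hand, the theorem is a formal corollary of Theorem \ref{thm:universalreduction}; no further calculation with the equivariance of $R_I$ or $Q_I$ or with basic neighborhoods $\Theta(s,U)$ is needed beyond citing the universal-level proof.
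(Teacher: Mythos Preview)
Your proposal is correct and matches the paper's own proof essentially line for line: the paper also observes that $\Phi_I$ and $\Phi^I$ are restrictions of the maps from Theorem \ref{thm:universalreduction}, invokes Proposition \ref{prop:tightfilterhomeo} to identify the unit-space images, and concludes that the restrictions are isomorphisms onto $\Gt(I)$ and $\Gt(S/I)$ respectively. If anything, you have been slightly more explicit than the paper in writing out the image computation as $\G(I)\vert_{R_I(\tfE(S)\setminus \htt(I\cap E))}=\Gt(I)$.
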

\begin{proof}
	Note that $\Phi_I$ and $\Phi^I$ are simply the restrictions of the corresponding maps in Theorem \ref{thm:universalreduction}. Thus Proposition \ref{prop:tightfilterhomeo}
	and Theorem \ref{thm:universalreduction} together guarantee that $\Phi_I$ maps the reduction of $\Gt(S)$ to ${\tfE(S) \backslash \htt(I \cap E)}$ bijectively onto a 
	subgroupoid of $\G(I)$ with unit space $\tfE(I)$. In other words, the range of $\Phi_I$ is contained in $\Gt(I)$. It is not hard to see that $\Phi_I$ is surjective, so it is an 
	isomorphism. The argument for $\Phi^I$ is similar.
\end{proof}

\section{Condition (K) for Inverse Semigroups}

In this section we define Condition (K) for inverse semigroups. We also investigate the relationship between saturated ideals of $S$ and the ideals in both the tight 
$C^*$-algebra and the Steinberg algebra associated to $S$. 

We first review Condition (K) and its consequences for directed graphs. A row-finite directed graph $\Lambda$ is said to satisfy \emph{Condition (K)} if and only if no vertex 
in $\Lambda$ is the base point of exactly one simple loop. It is observed in \cite[Corollary 6.5]{CE-MHS} that $\Lambda$ satisfies Condition (K) if and only if its path groupoid
is strongly effective. Alternatively, it is shown in \cite[Lemma 4.7]{Raeburn} that $\Lambda$ satisfies Condition (K) if and only if $\Lambda / H$ satisfies Condition (L) for every 
saturated hereditary set $H$ of vertices. (Recall that $H$ is \emph{hereditary} if $r_{e} \in H$ implies $s_{e} \in H$ for all $e \in \Lambda^1$, and $H$ is \emph{saturated} 
if $r^{-1}(v) \neq \emptyset $ and $\{ s_{e} : r_{e} = v\} \subseteq H$ imply $v \in H$.)

For any ideal $\mcI$ of $C^*(\Lambda)$, the set of vertices in $\mcI$, denoted $H_\mcI$, is saturated and hereditary. One can then consider the quotient graph 
$\Lambda / H_{\mcI}$ formed by removing $H_{\mcI}$ and any edges incident to those vertices \cite[Chapter 4]{Raeburn}. Provided $\Lambda / H_{\mcI}$ satisfies 
Condition (L), the Cuntz-Krieger uniqueness theorem implies that $C^*(\Lambda) / \mcI$ is isomorphic to $C^*(\Lambda / H_{\mcI})$. Therefore, if $\Lambda$
satisfies Condition (K), the ideals of $C^*(\Lambda)$ correspond to the saturated, hereditary sets of vertices, and the quotient by any ideal $\mcI$ is isomorphic to 
$C^*(\Lambda / H_{\mcI})$.

\begin{defn} 
	Let $S$ be an inverse semigroup with $0$. We say that $S$ \emph{satisfies Condition (K)} if and only if $S/I$ satisfies Condition (L) for each saturated ideal $I$ of $S$.
\end{defn}

Ideally one would like to state the above definition in a way that is intrinsic to $S$. We conjecture that such a definition would be that $S$ is fundamental and $E(S)$ satisfies 
some additional property. We have not yet been able to give such a definition, but we will develop a nice sufficient condition for $S$ to satisfy Condition (K): that all congruences 
on $S$ are Rees congruences. We characterize this sufficient condition in Theorem \ref{thm:Rees} below.

First we show that Condition (K) is related to the property that $\Gt(S)$ is strongly effective, as in the case of the path groupoid of a directed graph.

\begin{thm}\label{thm:condK}
	Let $S$ be a Hausdorff inverse semigroup with $0$, and suppose $E(S)$ has the trapping condition. Then $\Gt(S)$ is strongly effective if and only if $S$ satisfies 
	Condition (K).
\end{thm}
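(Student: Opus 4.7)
The plan is to translate between closed invariant subsets of $\tfE$ and saturated ideals of $S$ using the machinery of Section 4, and then reduce each effectiveness statement on a reduction $\Gt(S)\vert_A$ to a Condition (L) statement for the corresponding Rees quotient $S/I$ via Theorem \ref{thm:condL}.

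For the forward direction, suppose $\Gt(S)$ is strongly effective and fix an arbitrary saturated ideal $I$ of $S$. Then $I \cap E$ is a saturated invariant ideal of $E$, so $\htt(I \cap E)$ is a closed invariant subset of $\tfE$. By strong effectiveness, the reduction $\Gt(S)\vert_{\htt(I\cap E)}$ is effective, and Theorem \ref{thm:tightreduction}(2) identifies this reduction with $\Gt(S/I)$. Applying the implication $(1) \Rightarrow (4)$ of Theorem \ref{thm:condL}, which holds without any Hausdorff assumption, I conclude that $S/I$ satisfies Condition (L). Since $I$ was arbitrary, $S$ satisfies Condition (K).

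For the reverse direction, assume $S$ satisfies Condition (K) and let $A$ be an arbitrary closed invariant subset of $\tfE$. Set $X = k(A)$; a quick check using invariance of $A$ shows that $X$ is an invariant ideal of $E$, and the result just before Theorem \ref{thm:tightideals} tells us that $X$ is also saturated. The equality $A = \htt(X)$ is exactly the half of Theorem \ref{thm:tightideals} whose proof uses only the trapping condition (via Lemma \ref{lem:emptybasic}), so the admits-finite-covers hypothesis is not needed here. Setting $I = SXS$ via Proposition \ref{prop:invariantideals} produces a saturated ideal with $I \cap E = X$ and $A = \htt(I \cap E)$. Theorem \ref{thm:tightreduction}(2) yields $\Gt(S)\vert_A \cong \Gt(S/I)$, and Condition (K) says $S/I$ satisfies Condition (L). I would then invoke the implication $(4) \Rightarrow (1)$ of Theorem \ref{thm:condL} to conclude that $\Gt(S/I)$, and hence $\Gt(S)\vert_A$, is effective.

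The main obstacle is that the implication $(4) \Rightarrow (1)$ of Theorem \ref{thm:condL} requires $S/I$ to itself be Hausdorff. To clear this, I would note that the Hausdorff hypothesis used in Theorem \ref{thm:groupoiddoublearrow} is the finite generation of intersections $a^\down \cap b^\down$ in $S$, which is plainly inherited by the Rees quotient $S/I$ (the generators in $S/I$ being the images of those not in $I$, together with $0$). Equivalently, $\Gt(S/I)$ is isomorphic to a reduction of the Hausdorff groupoid $\Gt(S)$ to a closed invariant set and is therefore Hausdorff, which is the form in which the hypothesis actually enters the proof of Theorem \ref{thm:condL} for $S/I$. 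With this verification the argument closes.
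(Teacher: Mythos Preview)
Your proof is correct and follows essentially the same approach as the paper's: both directions use Theorem~\ref{thm:tightreduction} to identify $\Gt(S)\vert_{\htt(I\cap E)}$ with $\Gt(S/I)$ and then appeal to Theorem~\ref{thm:condL}, and for the reverse direction both set $I = S\,k(A)\,S$ and invoke only the trapping-condition half of Theorem~\ref{thm:tightideals} to get $A = \htt(I\cap E)$. The paper is terser and glosses over the point you explicitly address---that Theorem~\ref{thm:condL}$(4)\Rightarrow(1)$ requires the Hausdorff hypothesis on $S/I$---but your verification that Rees quotients inherit the finite-generation-of-meets property (equivalently, that the reduction of a Hausdorff groupoid to a closed invariant set is Hausdorff) is exactly the missing step and is correct.
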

\begin{proof} 
	Suppose $\Gt(S)$ is strongly effective, and let $I$ be a saturated ideal of $S$. Then $\Gt(S/I) \cong \Gt(S) \vert_{\htt(I\cap E)}$ by Theorem \ref{thm:tightreduction}. 
	Thus $\Gt(S/I)$ is effective. It follows from Theorem \ref{thm:condL} that $S/I$ satisfies Condition (L).

	Now assume $S$ satisfies Condition (K), and suppose $A$ is a closed invariant subset of $\Gt(S)^{(0)}$. Let $I = S k(A) S$ be the saturated ideal of $S$ generated by 
	$k(A)$. Then by the proof of Theorem \ref{thm:tightideals}, $A = \htt(k(A)) = \htt(I \cap E)$. (Note that this part of the proof does not require $S$ to admit finite covers.) 
	Therefore $\Gt(S) \vert_{A} \cong \Gt(S/I)$ by Theorem \ref{thm:tightreduction}. Thus $\Gt(S) \vert_{A}$ is effective, and it follows that $\Gt(S)$ is strongly effective.
\end{proof}

\begin{rem}
	Since the inverse semigroup of any row-finite directed graph $\Lambda$ satisfies the trapping condition, it follows from Theorem \ref{thm:condK} that $S_\Lambda$ 
	satisfies our version of Condition (K) if and only if $\Lambda$ satisfies Condition (K).
\end{rem}

Our goal now is to describe the ideals in $C^*(\Gt(S))$ and the basic ideals in the Steinberg algebras $A_R(\Gt(S))$, where $R$ is any commutative ring with identity, in 
terms of saturated ideals in $S$. Our analysis involves a direct application of the main results of \cite{BCFS} and \cite{CE-MHS}. Since we will not work directly with 
Steinberg algebras in this paper, we refer the reader to those papers for definitions.

Our first result in this direction is a direct translation of \cite[Theorem 3.1]{CE-MHS} to the context of the present paper.

\begin{thm} Let $S$ be a Hausdorff inverse semigroup with $0$, and let $R$ be a commutative ring with identity. Suppose $E(S)$ has the trapping condition and admits finite covers. Then $S$ satisfies Condition (K) if and only if
\[
	I \mapsto \mcJ_{I} = \bigl\{ f \in A_R(\Gt(S)) : \supp f \subseteq \Gt(S) \vert_{\htt(I\cap E)^c} \bigr\}
\]
is a lattice isomorphism from the saturated ideals of $S$ onto the basic ideals of the Steinberg algebra $A_R(\Gt(S))$.
\end{thm}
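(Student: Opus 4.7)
The plan is to factor the map $I \mapsto \mcJ_I$ through the lattice of open invariant subsets of $\Gt(S)^{(0)} = \tfE$, using two lattice isomorphisms: one between saturated ideals of $S$ and open invariant subsets of $\tfE$ supplied by the results of Section~4, and one between those open invariant subsets and basic ideals of $A_R(\Gt(S))$ supplied by \cite[Theorem 3.1]{CE-MHS}. The equivalence with Condition (K) will then come directly from Theorem \ref{thm:condK}.

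First I would check that under the hypotheses that $E(S)$ has the trapping condition and admits finite covers, the assignment $I \mapsto \htt(I \cap E)^c$ is an order-preserving bijection from the lattice of saturated ideals of $S$ onto the lattice of open invariant subsets of $\tfE$. Invariance of $I \cap E$ comes from Proposition \ref{prop:invariantideals}, the bijection with closed invariant subsets of $\tfE$ is the content of Theorem \ref{thm:tightideals}, and monotonicity is immediate since $I \subseteq J$ forces $\htt(J \cap E) \subseteq \htt(I \cap E)$, hence $\htt(I \cap E)^c \subseteq \htt(J \cap E)^c$. Since this is an order-preserving bijection between complete lattices whose inverse is also order-preserving, it is automatically a lattice isomorphism.

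Next I would invoke \cite[Theorem 3.1]{CE-MHS}, which asserts that for a Hausdorff ample groupoid $G$, the map
\[
	U \longmapsto \bigl\{ f \in A_R(G) : \supp f \subseteq G \vert_U \bigr\}
\]
is a lattice isomorphism from the open invariant subsets of $G^{(0)}$ onto the basic ideals of $A_R(G)$ if and only if $G$ is strongly effective. Apply this to $G = \Gt(S)$, which is Hausdorff by the standing hypothesis on $S$ and ample because $\tfE$ has a basis of compact open sets $D(e; e_1, \ldots, e_n)$. Observing that the composite of this map with $I \mapsto \htt(I \cap E)^c$ is exactly $I \mapsto \mcJ_I$, and that the first map is a lattice isomorphism under our hypotheses, the composite is a lattice isomorphism if and only if the second is. Combining this with Theorem \ref{thm:condK}, which says that $\Gt(S)$ is strongly effective if and only if $S$ satisfies Condition (K), yields both directions of the biconditional simultaneously.

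The only routine bookkeeping is verifying that lattice operations survive each translation (intersections of saturated ideals correspond to intersections of closed invariant sets, hence to intersections of basic ideals; joins correspond to unions of open invariant sets and to sums of basic ideals), but these are either built into the cited results or follow from elementary properties of $\htt$ and $\mcJ$. I do not anticipate any significant obstacle beyond carefully tracking the reversal of containment across the complementation step and invoking \cite[Theorem 3.1]{CE-MHS} in the correct direction for each implication.
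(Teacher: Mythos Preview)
Your proposal is correct and follows essentially the same approach as the paper: factor $I \mapsto \mcJ_I$ as the composition of $I \mapsto \htt(I\cap E)^c$ (a lattice isomorphism onto open invariant subsets of $\tfE$ by Theorem~\ref{thm:tightideals}) with the map from open invariant subsets to basic ideals, then invoke \cite[Theorem 3.1]{CE-MHS} for the latter and Theorem~\ref{thm:condK} to translate ``strongly effective'' into ``Condition (K).'' If anything, you are more explicit than the paper, which leaves the appeal to Theorem~\ref{thm:condK} implicit.
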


\begin{proof}
It follows from Theorem \ref{thm:tightideals} that the map
\[
I \mapsto \htt(I\cap E)^c = \bigcup_{e \in I \cap E} D^e
\]
defines a bijection between saturated ideals in $S$ and open invariant subsets of $\Gt(S)^{(0)}$. One can quickly verify that this map is also a lattice homomorphism where we view both sets as lattices under set inclusion, intersection, and union. To finish the proof we note that the map $U \mapsto \Gt(S)\vert_{U}$ is a lattice isomorphism by \cite[Theorem 3.1]{CE-MHS}.
\end{proof} 

We can also characterize simplicity for the Steinberg algebra $A_{\bbC}(\Gt(S))$ in terms of the structure of $S$. The following is a special example \cite[Theorem 4.1]{BCFS} in the present context.

\begin{cor} Let $S$ be a Hausdorff inverse semigroup with $0$. Suppose $E(S)$ has the trapping condition and admits finite covers. Then the Steinberg algebra $A_{\bbC}(\Gt(S))$ is simple if and only if $S$ satisfies Condition (L) and contains no nontrivial saturated ideals.
\end{cor}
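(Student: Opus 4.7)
The plan is to derive the corollary by combining the preceding theorem (which gives the lattice isomorphism between saturated ideals of $S$ and basic ideals of the Steinberg algebra) with the characterization of simplicity for Steinberg algebras due to Brown--Clark--Farthing--Sims. Specifically, \cite[Theorem 4.1]{BCFS} asserts that for a Hausdorff, ample, étale groupoid $G$, the algebra $A_{\bbC}(G)$ is simple if and only if $G$ is effective and minimal (i.e., $G^{(0)}$ contains no proper nonempty open invariant subsets). Since $S$ is assumed Hausdorff, $\Gt(S)$ is Hausdorff, so this criterion applies.

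First I would translate each of the two groupoid conditions into a property of $S$. Effectiveness of $\Gt(S)$ is equivalent to $S$ satisfying Condition (L): this is precisely the implication $(4) \Rightarrow (1)$ in Theorem \ref{thm:condL} (and the converse via $(1) \Rightarrow \cdots \Rightarrow (4)$), which is available because $S$ is Hausdorff. For minimality, I would invoke the preceding theorem's underlying correspondence, itself a consequence of Theorem \ref{thm:tightideals}: the map $I \mapsto \htt(I \cap E)^c = \bigcup_{e \in I \cap E} D^e$ is a lattice isomorphism from saturated ideals of $S$ onto open invariant subsets of $\Gt(S)^{(0)}$. Under this bijection, the trivial saturated ideals $\{0\}$ and $S$ correspond to $\emptyset$ and $\Gt(S)^{(0)}$ respectively, so $\Gt(S)$ is minimal if and only if $S$ has no saturated ideals other than $\{0\}$ and $S$.

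Combining these two translations with the BCFS criterion yields the stated equivalence. The forward direction reads: if $A_{\bbC}(\Gt(S))$ is simple, then $\Gt(S)$ is effective and minimal, hence $S$ satisfies Condition (L) and has no nontrivial saturated ideals. The reverse direction is the same equivalences run backwards, where the hypotheses that $E(S)$ has the trapping condition and admits finite covers are exactly what is required to invoke Theorem \ref{thm:tightideals} (and thereby the ideal-correspondence half of the preceding theorem).

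The only real subtlety is making sure the hypotheses line up at each step: the Hausdorff assumption on $S$ is needed both for $\Gt(S)$ to be Hausdorff (so that \cite[Theorem 4.1]{BCFS} applies) and for the implication $(4) \Rightarrow (1)$ of Theorem \ref{thm:condL}, while the trapping condition and admission of finite covers are needed for the saturated-ideals / open-invariants correspondence. Everything else is a bookkeeping exercise of unwinding the definitions, so I do not anticipate any serious obstacle beyond citing the correct earlier results with the correct hypotheses.
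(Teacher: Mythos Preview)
Your proposal is correct and matches the paper's approach: the paper simply remarks that the corollary is a special case of \cite[Theorem 4.1]{BCFS} in this context, and what you have written spells out precisely the translations (effectiveness $\leftrightarrow$ Condition (L) via Theorem \ref{thm:condL}, minimality $\leftrightarrow$ no nontrivial saturated ideals via Theorem \ref{thm:tightideals}) that make that citation work.
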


Similarly, there is a description of the lattice of ideals in $C^*(\Gt(S))$ for an amenable, Hausdorff $S$ satisfying the same finiteness conditions. The following is a translation of \cite[Corollary 5.9]{BCFS} to the tight $C^*$-algebra of $S$.

\begin{thm} Let $S$ be a Hausdorff, amenable inverse semigroup with $0$. Suppose $E(S)$ has the trapping condition and admits finite covers. Then $S$ satisfies Condition (K) if and only if
\[
	I \mapsto \mcJ_{I} = \overline{\bigl\{ f \in C_{c}(\Gt(S)) : \supp f \subseteq \Gt(S) \vert_{\htt(I\cap E)^c} \bigr\}}
\]
is a lattice isomorphism from the saturated ideals of $S$ onto the ideals of $C^*(\Gt(S))$. 
\end{thm}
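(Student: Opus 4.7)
The plan is to reduce to the result \cite[Corollary 5.9]{BCFS} for amenable Hausdorff étale groupoids by combining two earlier results in the paper: the correspondence between saturated ideals of $S$ and closed invariant subsets of $\tfE$ from Theorem \ref{thm:tightideals}, and the characterization of Condition (K) as strong effectiveness of $\Gt(S)$ from Theorem \ref{thm:condK}. The hypotheses (Hausdorff, amenable, trapping condition, admits finite covers) are tailored precisely so that both of these apply and $\Gt(S)$ is an amenable, Hausdorff étale groupoid to which the BCFS theorem applies (amenability of $S$ passes to amenability of $\Gt(S)$, so full and reduced $C^*$-algebras agree).

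First I would set up the composite map. Theorem \ref{thm:tightideals} gives a bijection $I \mapsto \htt(I\cap E)$ from saturated ideals of $S$ onto closed invariant subsets of $\tfE$, equivalently $I \mapsto U_I := \htt(I\cap E)^c$ onto open invariant subsets. I would verify this is a lattice homomorphism: since $k$ and $h$ are order-reversing and $\htt(I \cap E)^c = \bigcup_{e \in I \cap E} D^e$, unions of open invariant sets correspond to joins of saturated ideals and intersections to meets. Then I would invoke \cite[Corollary 5.9]{BCFS}, which, under the strong-effectiveness hypothesis, gives a lattice isomorphism from open invariant subsets $U$ of $\Gt(S)^{(0)}$ onto ideals of $C^*(\Gt(S))$ sending $U$ to $\overline{C_c(\Gt(S)|_U)}$. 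Composing with Theorem \ref{thm:tightideals} gives the desired isomorphism.

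For the forward direction, Condition (K) together with Theorem \ref{thm:condK} yields that $\Gt(S)$ is strongly effective, so \cite[Corollary 5.9]{BCFS} applies and the composite $I \mapsto \mcJ_I$ is a lattice isomorphism. For the converse, if $I \mapsto \mcJ_I$ is a lattice isomorphism onto ideals of $C^*(\Gt(S))$, precomposition with the inverse of the map from Theorem \ref{thm:tightideals} produces a lattice isomorphism from open invariant subsets of $\tfE$ onto ideals of $C^*(\Gt(S))$; by the other direction of \cite[Corollary 5.9]{BCFS} this forces $\Gt(S)$ to be strongly effective, whence Condition (K) holds by Theorem \ref{thm:condK}.

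The main technical point to verify is that the ideal attached to an open invariant $U$ in the BCFS statement agrees with $\mcJ_I$ for $U = U_I$, i.e.\ that $\overline{C_c(\Gt(S)|_U)} = \overline{\{f \in C_c(\Gt(S)) : \supp f \subseteq \Gt(S)|_U\}}$. Since $U$ is open and $r,d$ are local homeomorphisms, $\Gt(S)|_U = r^{-1}(U)$ is open in $\Gt(S)$, so extension by zero identifies $C_c(\Gt(S)|_U)$ with $\{f \in C_c(\Gt(S)) : \supp f \subseteq \Gt(S)|_U\}$; the closures in $C^*(\Gt(S))$ then coincide. The only other subtlety is to ensure the BCFS theorem is applicable in the form needed: amenability of $S$ (in the inverse-semigroup sense) implies amenability of $\Gt(S)$ as an étale groupoid, which I expect to be the most care-requiring prerequisite, but this is standard and independent of Condition (K).
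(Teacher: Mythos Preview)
Your proposal is correct and follows essentially the same approach as the paper: establish the lattice isomorphism between saturated ideals of $S$ and open invariant subsets of $\tfE$ via Theorem \ref{thm:tightideals}, translate Condition (K) into strong effectiveness of $\Gt(S)$ via Theorem \ref{thm:condK}, and then invoke \cite[Corollary 5.9]{BCFS}. The paper states this theorem without a separate proof, describing it as a direct translation of the BCFS result; the argument you outline is exactly the one implicit in the paper's proof of the preceding Steinberg-algebra analogue.
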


Finally, we have conditions that guarantee simplicity for $C^*(\Gt(S))$ based on \cite[Theorem 5.1]{BCFS}.

\begin{cor} Let $S$ be a Hausdorff, amenable inverse semigroup with $0$. If $E(S)$ has the trapping condition and admits finite covers, then $C^*(\Gt(S))$ is simple if and only if $S$ satisfies Condition (L) and contains no nontrivial saturated ideals.
\end{cor}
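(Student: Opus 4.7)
The plan is to combine the groupoid simplicity criterion from \cite[Theorem 5.1]{BCFS} with the translations already established in the excerpt. Concretely, for an amenable, second countable, Hausdorff \'etale groupoid $G$, $C^*(G)$ is simple if and only if $G$ is both effective and minimal, where minimality means that the only closed invariant subsets of $G^{(0)}$ are $\emptyset$ and $G^{(0)}$ itself. So once I apply this to $G = \Gt(S)$, it remains to rewrite each of these two groupoid properties in terms of $S$.

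For the effective half, I would invoke Theorem \ref{thm:condL}. Since $S$ is Hausdorff, condition (4) of that theorem (namely $S/\!\!\lr$ is fundamental, i.e., $S$ satisfies Condition (L)) is equivalent to $\Gt(S)$ being effective. This is a direct quotation, with no further work needed.

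For the minimal half, I would use Theorem \ref{thm:tightideals}, which, under the trapping condition and the assumption that $E(S)$ admits finite covers, gives a bijection $X \leftrightarrow \htt(X)$ between saturated invariant ideals $X \subseteq E(S)$ and closed invariant subsets of $\tfE = \Gt(S)^{(0)}$. Combining this with Proposition \ref{prop:invariantideals}, which matches invariant ideals of $E(S)$ with ideals of $S$ via $X \mapsto SXS$, I get a bijection between saturated ideals of $S$ and closed invariant subsets of $\tfE$. Under this correspondence, the zero ideal $\{0\}$ of $S$ matches the full tight spectrum $\tfE$, and the improper ideal $S$ matches $\emptyset$; hence $\Gt(S)$ is minimal precisely when $S$ has no nontrivial saturated ideals.

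Assembling: $C^*(\Gt(S))$ is simple iff $\Gt(S)$ is effective and minimal iff $S$ satisfies Condition (L) and contains no nontrivial saturated ideals. The only potential obstacle I would want to double-check is that the hypotheses on the groupoid in \cite[Theorem 5.1]{BCFS} are all met, in particular that amenability of $S$ in the sense used here does yield amenability of $\Gt(S)$ so that the full and reduced $C^*$-algebras coincide, and that Hausdorffness of $\Gt(S)$ follows from Hausdorffness of $S$; but these are standard facts in the inverse semigroup groupoid literature and should require no new argument here, having essentially already been used in the preceding theorem of this section.
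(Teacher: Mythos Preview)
Your proposal is correct and follows exactly the approach the paper indicates: the corollary is simply a translation of \cite[Theorem 5.1]{BCFS} to the present setting, and your argument supplies the details by invoking Theorem \ref{thm:condL} for effectiveness and Theorem \ref{thm:tightideals} together with Proposition \ref{prop:invariantideals} for minimality. The paper itself gives no proof beyond the citation, so your write-up is a faithful expansion of what the authors intend.
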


It can be difficult to show that an inverse semigroup satisfies Condition (K), since the condition involves every quotient of $S$ by a saturated ideal. We now develop a sufficient 
condition that is easier to check, namely that all congruences on $S$ are Rees congruences. To show that this condition implies Condition (K) we characterize inverse semigroups 
having this property as those for which each quotient $S/I$ is fundamental and has a $0$-disjunctive semilattice of idempotents. The proof of the following proposition appears in \cite[Lemma IV.3.10]{PetrichBook}.
\begin{prop}
Let $S$ be an inverse semigroup with $0$. Then every $0$-restricted congruence on $S$ is equality if and only if $S$ is fundamental and $E(S)$ is $0$-disjunctive.
\end{prop}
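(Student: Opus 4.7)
The plan is to leverage two congruences already developed earlier in the paper: the maximal idempotent-separating congruence $\mu$ and the Lenz congruence $\lr$.

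For the $(\Rightarrow)$ direction, I would first observe that both $\mu$ and $\lr$ are $0$-restricted. The relation $\mu$ is $0$-restricted because $\mu \subseteq \mcH$: if $s \mcmu 0$, then $s^*s = 0$ and $ss^* = 0$, whence $s = ss^*s = 0$. The relation $\lr$ was shown to be $0$-restricted in the proof of its being a congruence, just before Proposition \ref{prop:0disjunctive}. Consequently, if every $0$-restricted congruence on $S$ is equality, then $\mu$ is equality (so $S$ is fundamental), and $\lr$ is equality, which by Proposition \ref{prop:0disjunctive}(2) forces $E(S)$ to be $0$-disjunctive.

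For the $(\Leftarrow)$ direction, assume $S$ is fundamental and $E(S)$ is $0$-disjunctive, and let $\rho$ be an arbitrary $0$-restricted congruence on $S$. The crux is to show that $\rho$ is idempotent-separating; once this is done, the maximality of $\mu$ among idempotent-separating congruences yields $\rho \subseteq \mu$, and fundamentality then forces $\rho$ to be equality. To establish idempotent separation, suppose $e, f \in E(S)$ with $e \mathrel{\rho} f$ and $e \neq f$. Using that $\rho$ is $0$-restricted one first notes $e, f \neq 0$; multiplying $e \mathrel{\rho} f$ on the right by $e$ gives $e = e^2 \mathrel{\rho} ef$, so $ef \neq 0$. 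Without loss of generality $ef < f$, so $0 < ef < f$. Applying $0$-disjunctivity, choose $0 < f' < f$ with $(ef)f' = 0$; since $f' \leq f$ gives $ff' = f'$, this simplifies to $ef' = 0$. Multiplying $e \mathrel{\rho} f$ on the right by $f'$ then yields $0 = ef' \mathrel{\rho} ff' = f'$, forcing $f' = 0$ by $0$-restrictedness---a contradiction.

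The main obstacle, such as it is, is confined to the backward direction: one needs the $0$-disjunctive witness $f'$ (chosen below $f$ and disjoint from $ef$) to interact with the $\rho$-pair $(e,f)$ in a way that yields $f' \mathrel{\rho} 0$. The key observation that resolves this is the simplification $(ef)f' = ef'$, which requires nothing more than $f' \leq f$. Once that small point is in place, both directions reduce to straightforward applications of results already collected in the paper.
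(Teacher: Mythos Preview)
Your argument is correct. Note, however, that the paper does not supply its own proof of this proposition; it simply cites \cite[Lemma IV.3.10]{PetrichBook}. So there is no in-paper proof to compare against directly.

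That said, your forward direction is noteworthy: instead of constructing a nontrivial $0$-restricted congruence by hand from a hypothetical failure of $0$-disjunctivity (as a classical argument would), you simply invoke the Lenz congruence $\lr$ developed earlier in the paper and appeal to Proposition~\ref{prop:0disjunctive}(2). This is a pleasant shortcut that exploits machinery the paper has already built, though it is not the route Petrich takes (the congruence $\lr$ postdates his book). Your backward direction---showing that any $0$-restricted congruence is idempotent-separating via the $0$-disjunctive witness, then folding it into $\mu$---is the standard line and matches what one finds in Petrich. The only minor remark is that your ``$e,f\neq 0$'' observation is slightly compressed: the case $e=f=0$ is ruled out by $e\neq f$, and if exactly one is $0$ then $0$-restrictedness forces the other to be $0$ as well. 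Everything else is clean.
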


%

\begin{thm}
\label{thm:Rees}
Let $S$ be an inverse semigroup with $0$. Then every congruence on $S$ is a Rees congruence if and only if $S/I$ is fundamental and has a $0$-disjunctive semilattice of idempotents for every ideal $I$ of $S$. 
\end{thm}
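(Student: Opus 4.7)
The plan is to deduce both directions from the preceding proposition characterizing when every $0$-restricted congruence is equality. The bridge between the hypothesis (about all congruences on $S$) and the conclusion (about Rees quotients $S/I$) is the basic observation that every congruence $\rho$ on $S$ sits between the Rees congruence and the pushforward to a Rees quotient.

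For the forward direction, suppose every congruence on $S$ is a Rees congruence, and fix an ideal $I$ of $S$. To apply the preceding proposition to $S/I$, I would take an arbitrary $0$-restricted congruence $\rho$ on $S/I$ and pull it back to a congruence $\tilde\rho$ on $S$ via $a \tilde\rho b \iff [a]\rho[b]$. Then $\rho_I \subseteq \tilde\rho$, so by hypothesis $\tilde\rho = \rho_J$ for some ideal $J \supseteq I$. If $J$ properly contained $I$, any $c \in J \setminus I$ would give $[c] \neq [0]$ with $[c]\rho[0]$, contradicting $0$-restrictedness. Hence $J = I$, which forces $\rho$ to be equality, and the preceding proposition then yields that $S/I$ is fundamental with $0$-disjunctive semilattice.

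For the reverse direction, let $\rho$ be any congruence on $S$ and set $I = \{s \in S : s \,\rho\, 0\}$, the $\rho$-class of $0$. A short check shows $I$ is an ideal (if $s \,\rho\, 0$ and $t \in S$ then $st \,\rho\, 0$ and $ts \,\rho\, 0$), and moreover $\rho_I \subseteq \rho$ since any two elements of $I$ are $\rho$-related to $0$. Thus $\rho$ descends to a relation $\bar\rho$ on $S/I$ via $[a]\bar\rho[b] \iff a \,\rho\, b$, and one checks directly that $\bar\rho$ is a well-defined congruence which is $0$-restricted by construction of $I$. By hypothesis together with the preceding proposition, $\bar\rho$ must be equality. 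Unwinding, this says $a \,\rho\, b$ implies $a = b$ in $S/I$, i.e.\ either $a = b$ in $S$ or both $a, b \in I$; hence $\rho \subseteq \rho_I$, and combined with the reverse containment we conclude $\rho = \rho_I$.

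The argument is almost entirely bookkeeping once the preceding proposition is in hand. The only non-formal step is the identification of $I$ as the $\rho$-class of $0$ in the reverse direction, and the verification that the pullback $\tilde\rho$ in the forward direction must coincide with $\rho_I$ rather than some larger Rees congruence $\rho_J$. This latter point, which uses $0$-restrictedness of $\rho$ in an essential way, is the only place where the hypotheses are really being used and is where I expect the bulk of the actual writing to go.
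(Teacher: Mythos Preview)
Your proposal is correct and follows essentially the same route as the paper: both directions reduce to the preceding proposition by pulling back a $0$-restricted congruence on $S/I$ to a congruence on $S$ (forward direction) and by pushing an arbitrary congruence $\rho$ forward to $S/I$ with $I$ the $\rho$-class of $0$ (reverse direction). One minor wording slip: in the forward direction you wrote ``if $J$ properly contained $I$'' where you mean ``if $J$ properly contains $I$'' (equivalently, if $I \subsetneq J$), but the argument is clear.
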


\begin{proof}
	First suppose $S$ is an inverse semigroup with $0$ for which every congruence is a Rees congruence. Then any congruence $\rho$ is equal to the Rees congruence 
	$\rho_{J}$ where $J = \rho^{-1}(0)$ is the $\rho$ class of $0$. Let $I$ be an ideal of $S$. By the previous proposition it suffices to prove that every $0$-restricted 
	congruence on $S/I$ is equality. Let $\pi_I : S \to S/I$ be the quotient map and suppose that $\widehat{\rho}$ is a $0$-restricted congruence on $S/I$. Then 
	$\rho = \{(s,t): \pi_{I}(s)  \mathrel{\widehat{\rho}}  \pi_I(t)\}$ is a congruence on $S$. Thus $\rho$ is a Rees congruence, and so $s \mathrel{\rho} t$ if and only if $s=t$ 
	or $s,t \in \rho^{-1}(0)$. Now suppose $\pi(s) \mathrel{\widehat{\rho}} \pi(t)$ for some $s \neq t$. Then $s,t \in \rho^{-1}(0) \subseteq I$ and $\pi_I(s) = \pi_I(t)$. Thus 
	$\widehat{\rho}$ is equality.

	Now suppose $S$ is an inverse semigroup with $0$ and that $S/I$ is fundamental and $0$-disjunctive for every ideal $I$ of $S$. Let $\rho$ be a congruence on $S$. 
	Put $I = \rho^{-1}(0)$ and consider the congruence $\widehat{\rho}$ on $S/I$ defined by $[s] \mathrel{\widehat{\rho}} [t]$ if and only if $s \mcrho t$. By the definition of $I$, 
	$\widehat{\rho}$ is $0$-restricted. Therefore it must be equality. Thus $s \mcrho t$ if and only if $s = t$ or $s,t \in \rho^{-1}(0)$, so $\rho$ is a Rees congruence.
\end{proof}

\begin{cor}
	Let $S$ be an inverse semigroup with $0$. If every congruence on $S$ is Rees then $S$ satisfies Condition (K).
\end{cor}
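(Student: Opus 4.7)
The plan is to chain together Theorem \ref{thm:Rees} and Proposition \ref{prop:0disjunctive}(3), which together provide essentially all the work needed. The corollary's hypothesis is about arbitrary congruences, while Condition (K) only concerns saturated ideals, so the direction of implication should be straightforward.

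First I would unwind the definitions. By the definition of Condition (K), it suffices to show that for every saturated ideal $I$ of $S$, the quotient $S/I$ satisfies Condition (L), i.e., $(S/I)/\!\!\lr$ is fundamental. Let $I$ be any saturated ideal of $S$ (in fact any ideal at all will do here). Since every congruence on $S$ is assumed to be Rees, Theorem \ref{thm:Rees} applies and gives that $S/I$ is fundamental and has a $0$-disjunctive semilattice of idempotents.

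The key step is then to apply Proposition \ref{prop:0disjunctive}(3) to $S/I$: since $S/I$ is fundamental and $E(S/I)$ is $0$-disjunctive, the relation $\lr$ on $S/I$ is equality. Therefore $(S/I)/\!\!\lr$ is isomorphic to $S/I$ itself, which is fundamental. Hence $S/I$ satisfies Condition (L), and since $I$ was an arbitrary saturated ideal, $S$ satisfies Condition (K).

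There is no real obstacle here; the substantive content has already been absorbed into Theorem \ref{thm:Rees} and Proposition \ref{prop:0disjunctive}(3). The only thing to check carefully is that the Rees hypothesis on $S$ really does apply to every ideal (not just saturated ones), which is immediate from the statement of Theorem \ref{thm:Rees}. In particular, the corollary is strictly a formal consequence of the preceding results and requires no additional hypotheses such as the trapping condition or Hausdorffness.
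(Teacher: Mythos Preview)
Your proof is correct and follows essentially the same approach as the paper: invoke Theorem \ref{thm:Rees} to get that each $S/I$ is fundamental with $0$-disjunctive semilattice, then apply Proposition \ref{prop:0disjunctive}(3) to conclude that $\lr$ is equality on $S/I$, so $S/I$ satisfies Condition (L). Your additional remark that saturatedness of $I$ is not actually used, and that no extra hypotheses such as Hausdorffness or the trapping condition are needed, is correct and worth noting.
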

\begin{proof}
	Suppose that every congruence on $S$ is Rees and let $I$ be a saturated ideal of $S$. Then $S/I$ is fundamental and has a $0$-disjunctive semilattice of idempotents. 
	By (3) of Proposition \ref{prop:0disjunctive} it follows that $\leftrightarrow$ is equality on $S/I$ and hence $S/I$ satisfies Condition (L).
\end{proof}

An inverse semigroup is said to be \emph{congruence free} if every congruence on $S$ is equality or the trivial congruence. As a corollary of Theorem \ref{thm:Rees}, we 
have the following characterization of the property that an inverse semigroup is congruence free. It was originally proved by Baird \cite[Theorem 3]{Baird} that a fundamental, 
$0$-simple inverse semigroup with $0$-disjunctive semilattice is congruence free.

\begin{cor}
	Let $S$ be an inverse semigroup with $0$. Then $S$ is congruence free if and only if $S$ is fundamental, $0$-simple, and has a $0$-disjunctive semilattice of idempotents.
\end{cor}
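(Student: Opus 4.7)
The plan is to derive both directions of the equivalence essentially for free from Theorem \ref{thm:Rees}, using the observation that $0$-simplicity collapses the collection of ideals (and hence of Rees congruences) down to just $\{0\}$ and $S$.

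For the reverse direction, which is Baird's original result, I would assume that $S$ is fundamental, $0$-simple, and has a $0$-disjunctive semilattice. Since the only ideals of $S$ are $\{0\}$ and $S$, the hypothesis of Theorem \ref{thm:Rees} is trivially verified: the quotient $S/\{0\}$ is just $S$ itself, while $S/S$ is the trivial one-element inverse semigroup, which is vacuously fundamental with a $0$-disjunctive semilattice. Thus every congruence on $S$ is a Rees congruence. But the only Rees congruences available are $\rho_{\{0\}}$ (equality) and $\rho_S$ (the universal congruence), so $S$ is congruence free.

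For the forward direction, I would start by showing that $S$ must be $0$-simple: given any ideal $I$ of $S$, the Rees congruence $\rho_I$ is either equality or universal by hypothesis, and these force $I=\{0\}$ or $I=S$, respectively. Next, since equality equals $\rho_{\{0\}}$ and the universal congruence equals $\rho_S$, \emph{every} congruence on $S$ is a Rees congruence. Theorem \ref{thm:Rees} then yields that $S/I$ is fundamental and $0$-disjunctive for every ideal $I$, and specializing to $I=\{0\}$ gives the required conclusion that $S$ itself is fundamental with a $0$-disjunctive semilattice.

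I do not anticipate any serious obstacle: Theorem \ref{thm:Rees} does all of the heavy lifting, and the only subtle point is to verify cleanly the bookkeeping that in a $0$-simple inverse semigroup with $0$ the collection of Rees congruences is exactly $\{\text{equality}, \text{universal}\}$, so that ``all congruences are Rees'' coincides with ``congruence free.''
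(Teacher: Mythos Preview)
Your proof is correct and, for the reverse direction, identical to the paper's: both simply invoke Theorem \ref{thm:Rees} and observe that $0$-simplicity leaves only the two Rees congruences $\rho_{\{0\}}$ and $\rho_S$.

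For the forward direction there is a small but genuine difference. You first establish $0$-simplicity, then observe that congruence-freeness forces every congruence to be Rees, and finally feed this back into Theorem \ref{thm:Rees} (specialized to $I=\{0\}$) to extract fundamentality and $0$-disjunctivity. The paper instead argues directly: since $\mu$ and $\leftrightarrow$ are $0$-restricted congruences, congruence-freeness forces each to be equality, giving fundamentality immediately and $0$-disjunctivity via Proposition \ref{prop:0disjunctive}(2). Your route is more uniform (Theorem \ref{thm:Rees} does everything), while the paper's is marginally more self-contained in that direction, not invoking the full strength of Theorem \ref{thm:Rees}. Both are equally valid and equally short.
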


\begin{proof}

If $S$ is congruence free, then $S$ is fundamental and $\leftrightarrow$ is equality since $\mu$ and $\leftrightarrow$ are $0$-restricted congruences. By Proposition \ref{prop:0disjunctive}, it follows that $E(S)$ is $0$-disjunctive. Also, $S$ is $0$-simple since for any ideal $I$, the congruence $\rho_I$ is either trivial, in which case $I = S$, or equality, in which case $I = \{0\}$.

The converse follows directly from Theorem \ref{thm:Rees}.
\end{proof}

\section{Self-Similar Graph Actions}
In the previous section we discussed the ideals in the tight $C^*$-algebra of an inverse semigroup satisfying Condition (K). In this section we will highlight these results for the recently defined inverse semigroups associated with self-similar graph actions \cite{ExelPardoGraph}. Rather than dealing with Condition (K) directly, we focus on the property that all congruences are Rees congruences. It turns out that $S_{\Lambda}$ has only Rees congruences exactly when $\Lambda$ satisfies a condition discovered by Mesyan and Mitchell \cite{MesyanMitchell} that we will call Condition (M). In this section we develop conditions on a self-similar graph action $(G,\Lambda,\varphi)$ that are equivalent to the property that all congruences on $\mcS_{G,\Lambda}$ are Rees. Along the way we describe a correspondence between Rees ideals of $\mcS_{G,\Lambda}$ and certain sets of vertices in $E$. This is similar to the ideal theory developed for graph algebras in terms of saturated, hereditary sets of vertices. 

Consider the inverse semigroup $\mcS_{G,\Lambda}$ of a self-similar graph action $(G,\Lambda,\varphi)$ of a countable discrete group $G$ on a finite directed graph $\Lambda$ with no sources or sinks, as defined in \cite{ExelPardoGraph}. By definition $\varphi: G\times \Lambda^1 \to G$ satisfies
\[
\varphi(g, e)x = gx, \, \text{ for all } g\in G, \, e\in \Lambda^1, \, x\in \Lambda^0
\]
Then the action of $G$ and the map $\varphi$ extend to $\Lambda^*$ in a natural way. For every $g, h\in G$, for every $x\in \Lambda^0$ and for every $\alpha$ and $\beta$ in $\Lambda^*$ such that $s_{\alpha} = r_{\beta}$ we have

\begin{tabular}{p{6cm} p{6cm}}
\begin{enumerate}
\item[(E1)] $(gh)\alpha = g(h\alpha)$
\item[(E2)] $\varphi(gh, \alpha) = \varphi(g, h\alpha)\varphi(h, \alpha)$
\item[(E3)] $\varphi(g, x) = g$
\item[(E4)] $r_{g\alpha} = gr_{\alpha}$
\end{enumerate}
&
\begin{enumerate}
\item[(E5)] $s_{g\alpha} = gs_{\alpha}$
\item[(E6)] $ \varphi(g, \alpha)x = gx$
\item[(E7)] $g(\alpha\beta) = (g\alpha)\varphi(g,\alpha)\beta$
\item[(E8)] $\varphi(g, \alpha\beta) = \varphi(\varphi(g, \alpha),\beta)$
\end{enumerate}
\end{tabular}

Given a self-similar action $(G,\Lambda,\varphi)$, the inverse semigroup $\mcS_{G,\Lambda}$ is defined as,
\[ 
	\mcS_{G,\Lambda} = \left\{ (\alpha, g, \beta) \in \Lambda^* \times G \times \Lambda^* \,|\, s_{\alpha} = g s_{\beta} \right\} \cup \{0\}. 
\]
The inverse semigroup operations are given by

\[ (\alpha, g, \beta)(\gamma, h, \nu) = 
\begin{cases}
(\alpha g \gamma', \varphi(g, \gamma')h, \nu), & \text{if } \gamma = \beta \gamma',\\
(\alpha, g \varphi(h^{-1},\beta')^{-1}, \nu h^{-1} \beta') & \text{if } \beta = \gamma \beta',\\
0 & \text{otherwise}
\end{cases}
\]
with
\[ 
	(\alpha, g, \beta)^* = (\beta, g^{-1}, \alpha). 
\]
When $G$ is the trivial group, we will simply write $\mcS_{\Lambda}$. Note that this is the usual inverse semigroup of a directed graph.

In light of Theorem \ref{thm:Rees}, it is natural to characterize graph inverse semigroups for which every congruence is a Rees congruence. This was done in \cite{MesyanMitchell} in terms of a condition that we will refer to as Condition (M). 

\begin{defn} A graph $\Lambda$ is said to satisfy Condition (M) if, for every edge $e \in \Lambda^{1}$ there exists a path $\alpha \in \Lambda^* \setminus \Lambda^0$ with $s_{\alpha} = s_{e}$ and $r_{\alpha} = r_{e}$ such that $\alpha \not\in e\Lambda^{*}$.
\end{defn}

\begin{thm}[Mesyan, Mitchell \cite{MesyanMitchell}] For any directed graph $\Lambda$ the following are equivalent 
\begin{enumerate}
\item All congruences on $\mcS_{\Lambda}$ are Rees.
\item $\Lambda$ satisfies Condition (M).
\end{enumerate}
\end{thm}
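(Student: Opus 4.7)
The plan is to use Theorem \ref{thm:Rees} to convert (1) into a statement about Rees quotients: every congruence on $\mcS_\Lambda$ is Rees if and only if $\mcS_\Lambda/I$ is fundamental with $0$-disjunctive semilattice for every ideal $I$ of $\mcS_\Lambda$. Since $s^*s = t^*t$ and $ss^* = tt^*$ force $s = t$ in any graph inverse semigroup, $\mu$ is trivial on $\mcS_\Lambda$ (and on every quotient), so fundamentality is automatic and only $0$-disjunctiveness needs to be controlled. Extending Example \ref{exmp:directedgraphs}, I would identify the ideals of $\mcS_\Lambda$ with hereditary subsets $H \subseteq \Lambda^0$ via $I_H = \{(\alpha,\beta) : s_\alpha \in H\} \cup \{0\}$. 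Because $H$ is closed under going backward along edges, every nonzero element $(\alpha,\beta)$ of $\mcS_\Lambda/I_H$ has every vertex on $\alpha$ and $\beta$ lying in $\Lambda^0 \setminus H$; this yields a natural isomorphism $\mcS_\Lambda/I_H \cong \mcS_{\Lambda_H}$, where $\Lambda_H$ is the subgraph of $\Lambda$ with vertex set $\Lambda^0 \setminus H$ and edge set $\{e : s_e, r_e \notin H\}$.

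Combined with the fact noted in the introduction that $\mcS_\Gamma$ is $0$-disjunctive precisely when no vertex of $\Gamma$ has in-degree one, condition (1) becomes the following graph-theoretic statement: for every hereditary $H \subseteq \Lambda^0$, no vertex of $\Lambda_H$ has in-degree one in $\Lambda_H$. The forward direction (M $\Rightarrow$ this condition) is direct. Given (M), a hereditary $H$, and an edge $e$ of $\Lambda_H$ into a vertex $v$ with source $u \in \Lambda^0 \setminus H$, pick a non-trivial path $\alpha$ from $u$ to $v$ whose last edge $\alpha_n$ is not $e$. An induction along $\alpha$, using that $H$ is closed under going backward along edges and $s_\alpha = u \notin H$, places every vertex on $\alpha$ outside $H$, so $\alpha_n$ is a second edge of $\Lambda_H$ into $v$.

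For the converse I would argue the contrapositive: assume (M) fails at an edge $e$ from $u$ to $v$, so every non-trivial path from $u$ to $v$ ends in $e$. Let $T = \{s_f : f \in r^{-1}(v),\ f \neq e\}$ and let $H$ be the hereditary closure of $T$, i.e., the set of vertices $w$ admitting a (possibly trivial) path to some element of $T$. The crux is that $u, v \notin H$: membership of $u$ in $H$ would, after extending by the appropriate $f \in r^{-1}(v) \setminus \{e\}$, produce a non-trivial path from $u$ to $v$ ending in $f \neq e$; membership of $v$ in $H$ would similarly produce a loop at $v$ whose precomposition with $e$ is a non-trivial path from $u$ to $v$ ending in a non-$e$ edge. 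Either conclusion contradicts the failure of (M). Once $u, v \notin H$ is established, the unique incoming edge of $v$ in $\Lambda_H$ is $e$, violating the reduced condition.

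The main obstacle I anticipate is the contrapositive construction above. The delicate point is converting each way $u$ or $v$ could enter the hereditary closure of $T$ into an explicit non-trivial path from $u$ to $v$ whose last edge is not $e$, and then ruling all such paths out via the failure of (M). The identification $\mcS_\Lambda/I_H \cong \mcS_{\Lambda_H}$ also takes some routine but careful bookkeeping to match the quotient product with the graph-inverse-semigroup product on $\Lambda_H$, though the form of the hereditary condition in this paper makes that verification natural.
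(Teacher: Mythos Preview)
Your proposal is correct. Note, however, that the paper does not itself prove this statement: it is quoted as a result of Mesyan and Mitchell, and the subsequent Corollary~\ref{cor:condM} is \emph{derived from} it together with Theorem~\ref{thm:Rees}, rather than the other way around.

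That said, your argument is essentially the specialization to trivial $G$ of what the paper does in Proposition~\ref{prop:SSGAcondM} for self-similar graph actions. There, too, the authors reduce via Theorem~\ref{thm:Rees} to showing that every quotient semilattice is $0$-disjunctive, identify quotients with inverse semigroups of subgraphs obtained by deleting a hereditary ($G$-invariant) vertex set, and translate $0$-disjunctiveness into the statement that no vertex has in-degree one. Your contrapositive construction, taking the hereditary closure of $\{s_f : f \in r^{-1}(r_e),\ f \neq e\}$ and checking that $s_e$ and $r_e$ survive, is exactly the $G=\{1\}$ case of the ideal $I = \mcS_{G,\Lambda}\,\{(s_f,1,s_f) : f \in F\}\,\mcS_{G,\Lambda}$ used in that proposition. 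So while the paper routes this result through the literature, your direct proof matches the paper's own method in the more general setting.
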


\begin{cor}\label{cor:condM} Let $\Lambda$ be a directed graph and $\mcS_{\Lambda}$ the graph inverse semigroup of $\Lambda$. Then $\Lambda$ satisfies Condition (M) if and only if $\mcS_{\Lambda} / I$ has a $0$-disjunctive semilattice of idempotents for every ideal $I$ of $\mcS_{\Lambda}$.
\end{cor}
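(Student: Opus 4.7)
The plan is to combine the Mesyan--Mitchell theorem with Theorem \ref{thm:Rees}. The catch is that Theorem \ref{thm:Rees} requires both fundamentality and $0$-disjunctivity of every Rees quotient, while the corollary only asks about $0$-disjunctivity. So the key step is to show that every Rees quotient of $\mcS_\Lambda$ is automatically fundamental; once that is in hand, Theorem \ref{thm:Rees} reduces the corollary to the equivalence ``Condition (M) $\iff$ every congruence on $\mcS_\Lambda$ is Rees,'' which is the Mesyan--Mitchell theorem quoted just above.

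The forward direction is immediate: Condition (M) implies every congruence on $\mcS_\Lambda$ is Rees, and Theorem \ref{thm:Rees} then furnishes a $0$-disjunctive semilattice for every Rees quotient. For the converse, granted that every $\mcS_\Lambda/I$ is fundamental, the $0$-disjunctive hypothesis together with Theorem \ref{thm:Rees} forces every congruence on $\mcS_\Lambda$ to be Rees, and the Mesyan--Mitchell theorem then yields Condition (M). To see that every $\mcS_\Lambda/I$ is fundamental, I would verify two claims: first, that every graph inverse semigroup is fundamental; and second, that every Rees quotient of $\mcS_\Lambda$ is again a graph inverse semigroup. The first is a direct computation: if $(\alpha_1,\beta_1) \mcH (\alpha_2,\beta_2)$, then comparing source and range idempotents forces $(\alpha_1,\alpha_1) = (\alpha_2,\alpha_2)$ and $(\beta_1,\beta_1) = (\beta_2,\beta_2)$, so $\alpha_1 = \alpha_2$ and $\beta_1 = \beta_2$; hence $\mcH$ is equality, and since $\mu \subseteq \mcH$, so is $\mu$.

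The main obstacle is the second claim. I would use the standard fact that an ideal $I$ in an inverse semigroup is determined by $I \cap E$ (since $s \in I \Leftrightarrow s^*s \in I$) to reduce to classifying invariant order ideals in $E(\mcS_\Lambda)$, and then show that each such ideal is generated by its vertex idempotents $\{(v,v) : v \in H\}$ for the hereditary set $H_I = \{v \in \Lambda^0 : (v,v) \in I\}$. This identifies $\mcS_\Lambda / I$ with $\mcS_{\Lambda'}$, where $\Lambda'$ is the subgraph obtained from $\Lambda$ by deleting the vertices of $H_I$ and the edges they touch. Alternatively, one can invoke the more general result to be established later in Section~6, that every Rees quotient of $\mcS_{G,\Lambda}$ is itself the inverse semigroup of a self-similar graph action, and specialize to trivial $G$, at the cost of a forward reference within the section.
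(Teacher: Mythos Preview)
Your argument is correct and follows the same overall strategy as the paper: combine the Mesyan--Mitchell theorem with Theorem~\ref{thm:Rees}, after verifying that every Rees quotient $\mcS_\Lambda/I$ is fundamental. The only difference is in how you establish fundamentality of the quotients, and here you take an unnecessary detour.

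You correctly compute that $\mcH$ is equality on $\mcS_\Lambda$ (i.e., $\mcS_\Lambda$ is \emph{combinatorial}), but then work to identify $\mcS_\Lambda/I$ as the graph inverse semigroup of a subgraph in order to apply the same computation again. The paper bypasses this: once $\mcS_\Lambda$ is known to be combinatorial, so is every Rees quotient, with no graph-theoretic input needed. Indeed, if $s,t \in S\setminus I$ satisfy $s \mcH t$ in $S/I$, then $s^*s, ss^*, t^*t, tt^*$ all lie in $S\setminus I$ (since $s = s(s^*s)$ and $I$ is an ideal), so the equalities $s^*s = t^*t$ and $ss^* = tt^*$ already hold in $S$, whence $s = t$. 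This makes the classification of ideals via hereditary vertex sets---and the forward reference to Section~6---unnecessary for the present corollary.
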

\begin{proof}
It is known that $\mcS_{\Lambda}$ is combinatorial for any directed graph $\Lambda$. It follows that $\mcS_{\Lambda} / I$ is combinatorial and hence fundamental for any ideal $I$ of $\mcS_{\Lambda}$. Thus by Theorem \ref{thm:Rees}, all congruences on $\mcS_{\Lambda}$ are Rees congruences if and only if $\mcS_{\Lambda} / I$ has a $0$-disjunctive semilattice of idempotents for each ideal $I$ of $\mcS_{\Lambda}$.
\end{proof}

Next we turn to characterizing the property that all congruences are Rees congruences for the more general class of self-similar graph actions. We first characterize ideals in $\mcS_{G,\Lambda}$ in terms of the vertices contained in those ideals. This is very similar to the characterization of ideals in graph $C^*$-algebras as described in the beginning of section 5.

\begin{lem}\label{lem:graphideal} Let $(G,\Lambda,\varphi)$ be a self-similar graph action and let $S = \mcS_{G,\Lambda}$. For $u,v \in E^0$, $(u,1,u) \in S (v,1,v) S$ if and only if there is a path $\beta \in \Lambda^*$ and $h \in G$ with $s_{\beta} = h u$ and $r_{\beta} = v$.
\end{lem}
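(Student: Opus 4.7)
The plan is to verify both implications by direct computation in $\mcS_{G,\Lambda}$, using the multiplication formula together with the axioms (E1)--(E8), especially (E3) which evaluates $\varphi(g, x) = g$ at a vertex $x$.

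For the backward direction, assume $\beta \in \Lambda^*$ and $h \in G$ satisfy $s_\beta = hu$ and $r_\beta = v$. Then both $(u, h^{-1}, \beta)$ and $(\beta, h, u)$ lie in $\mcS_{G,\Lambda}$: the required compatibility $s_\alpha = g s_\beta$ reduces respectively to $u = h^{-1}(hu)$ and $hu = h \cdot u$. I will compute the triple product $(u, h^{-1}, \beta)(v, 1, v)(\beta, h, u)$. Multiplying the first two factors uses Case 2 of the formula (with the decomposition $\beta = v \beta$, since $r_\beta = v$) and yields $(u, h^{-1}, \beta)$; here one uses the identity $\varphi(1, \beta) = 1$, which follows easily from (E2) and (E3). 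Multiplying by $(\beta, h, u)$ then falls under Case 1 with overlap vertex $\gamma' = s_\beta = hu$, and (E3) gives $\varphi(h^{-1}, hu) = h^{-1}$, so the product collapses to $(u \cdot u, \, h^{-1}h, \, u) = (u, 1, u)$.

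For the forward direction, write $(u,1,u) = s \cdot (v,1,v) \cdot t$ with $s = (\alpha_1, g_1, \beta_1)$ and $t = (\alpha_2, g_2, \beta_2)$ in $\mcS_{G,\Lambda}$. First I analyze when $s(v,1,v) \neq 0$: inspection of the product formula shows that nonvanishing forces either $\beta_1 = v$ (Case 1) or $r_{\beta_1} = v$ (Case 2), and in both cases $s \cdot (v,1,v) = s$. Thus the equation reduces to $s \cdot t = (u,1,u)$ with $r_{\beta_1} = v$. Now I split on the two cases of the product $s \cdot t$. In each case, requiring the first and third coordinates of the output to be the vertex $u$ forces $\alpha_1 = u$, $\beta_2 = u$, and the overlap ($\gamma'$ in Case 1 or $\beta_1''$ in Case 2) to be a vertex equal to $g_1^{-1} u$. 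The middle coordinate, simplified using (E3) at a vertex, then forces $g_2 = g_1^{-1}$. Finally, the defining constraint $s_{\alpha_1} = g_1 s_{\beta_1}$ with $\alpha_1 = u$ yields $s_{\beta_1} = g_1^{-1} u$. Setting $\beta := \beta_1$ and $h := g_1^{-1}$ gives the required $r_\beta = v$ and $s_\beta = hu$.

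The only obstacle is careful bookkeeping through the case analysis in the forward direction---in particular, keeping track of which of the two cases of the product formula is being invoked and systematically applying (E3) to collapse $\varphi$ at vertices. No deeper ideas are needed beyond this direct manipulation.
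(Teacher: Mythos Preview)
Your proof is correct and follows essentially the same strategy as the paper: absorb the central idempotent $(v,1,v)$ to obtain $r_{\beta_1}=v$, then case-split on the product of the two outer elements, using (E3) to collapse $\varphi$ at vertices and the compatibility constraint $s_{\alpha_1}=g_1 s_{\beta_1}$ to extract $s_{\beta_1}=g_1^{-1}u$. The paper's version is slightly terser (it records $r_\beta=v=r_\gamma$ in one line and omits the second case as ``similar''), but the content is the same.
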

\begin{proof} First suppose for some $u,v \in \Lambda^0$ that $(u,1,u) \in S (v,1,v) S$. Then there are $(\alpha, g, \beta), (\gamma, h, \nu) \in S$ such that $(u,1,u) = (\alpha, g, \beta)(v,1,v)(\gamma, h, \nu)$.

Then $r_{\beta} = v = r_{\gamma}$ and $(u,1,u) = (\alpha, g, \beta)(\gamma, h, \nu)$. If $\gamma = \beta \gamma'$, then $(u,1,u) = (\alpha g \gamma', \varphi(g, \gamma')h, \nu)$. It follows that $\alpha = u$, $\gamma' \in \Lambda^0$, and $\varphi(g, \gamma') = g$. So $g = h^{-1}$. Also $h^{-1}s_{\beta} = u$, so $s_{\beta} = hu$ and $r_{\beta} = v$. The case that $\beta = \gamma \beta'$ is similar. Again $s_{\beta} = hu$ and $r_{\beta} = v$. 

Conversely, if there exists a path $\beta$ with $s_{\beta} = h u$ and $r_{\beta} = v$, then $(u,1,u) = (u,h^{-1},\beta)(v,1,v)(\beta,h,u)$.
\end{proof}

\begin{defn} Let $(G,\Lambda,\varphi)$ by a self-similar graph action. A subset $F \subseteq \Lambda^0$ is called \emph{hereditary} if for all $e$ in $\Lambda^{1}$, 
\[r_{e} \in F \text{ implies that } s_{e} \in F. \]
A subset $F \subseteq \Lambda^0$ is called \emph{$G$-invariant} if for all $u$ in $\Lambda^{0}$ and $g$ in $G$,
\[u \in F  \text{ implies that } gu \in F.\] 
\end{defn}

\begin{prop}\label{prop:selfsimideals} Suppose that $(G,\Lambda,\varphi)$ is a self-similar graph action. There is a correspondence between ideals $I$ of $\mcS_{G,\Lambda}$ and hereditary, $G$-invariant subsets of vertices in $\Lambda$. 
\end{prop}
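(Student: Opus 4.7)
The plan is to reduce the problem to Proposition \ref{prop:invariantideals}, which identifies the ideals of $\mcS_{G,\Lambda}$ with the invariant order ideals of $E = E(\mcS_{G,\Lambda})$. Since the nonzero idempotents of $\mcS_{G,\Lambda}$ are exactly $(\alpha, 1, \alpha)$ for $\alpha \in \Lambda^*$, it suffices to exhibit a bijection between invariant order ideals of $E$ and hereditary, $G$-invariant subsets of $\Lambda^0$. The natural candidates are the maps
\[
X \longmapsto F_X = \{v \in \Lambda^0 : (v, 1, v) \in X\}, \qquad F \longmapsto X_F = \{(\alpha, 1, \alpha) : s_\alpha \in F\} \cup \{0\}.
\]

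First I would verify that the sets produced are of the right type. Given an invariant order ideal $X$, the set $F_X$ is hereditary: if $r_e \in F_X$, then $(r_e, 1, r_e) \in X$, and a short product computation shows $(e, 1, e) \leq (r_e, 1, r_e)$, so $(e, 1, e) \in X$ by downward closure; then invariance applied to $s = (e, 1, s_e)$, which satisfies $ss^* = (e, 1, e)$ and $s^*s = (s_e, 1, s_e)$, gives $s_e \in F_X$. The argument for $G$-invariance is analogous, using $s = (gv, g, v)$ with $ss^* = (gv, 1, gv)$ and $s^*s = (v, 1, v)$. Conversely, given a hereditary, $G$-invariant set $F$, downward closure of $X_F$ follows because any $(\beta, 1, \beta) \leq (\alpha, 1, \alpha)$ corresponds to $\beta$ extending $\alpha$ at the source, so $s_\beta$ is reachable from $s_\alpha$ by walking against edges, and iterated hereditarity gives $s_\beta \in F$; invariance of $X_F$ then follows from $G$-invariance, together with the identity $s_\alpha = g\, s_\beta$ built into the definition of $\mcS_{G,\Lambda}$.

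It then remains to check that the two maps are mutually inverse. The equality $F_{X_F} = F$ is immediate from the definitions. For $X_{F_X} = X$, the nontrivial inclusion is that $(\alpha, 1, \alpha) \in X$ forces $s_\alpha \in F_X$; this follows by invariance applied to $s = (\alpha, 1, s_\alpha)$, for which $ss^* = (\alpha, 1, \alpha)$ and $s^*s = (s_\alpha, 1, s_\alpha)$. The reverse inclusion uses downward closure, since $(\alpha, 1, \alpha) \leq (s_\alpha, 1, s_\alpha)$. The main obstacle is bookkeeping through the somewhat intricate multiplication rule for $\mcS_{G,\Lambda}$, being careful with the cocycle $\varphi(g, \cdot)$ and with the right-to-left convention on path concatenation; none of the individual steps is conceptually difficult, but each must be computed with a definite choice of witness in order to apply invariance in the correct direction.
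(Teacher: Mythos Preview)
Your overall strategy---factoring through Proposition~\ref{prop:invariantideals} and then matching invariant order ideals of $E(\mcS_{G,\Lambda})$ with hereditary, $G$-invariant vertex sets---is sound and is a genuinely different route from the paper's direct argument, which works at the level of ideals of $\mcS_{G,\Lambda}$ itself and leans on Lemma~\ref{lem:graphideal}. Your approach is more modular and reuses the general machinery already set up; the paper's is more hands-on but has the advantage of producing the explicit description $\mcV(I) = \{s_\alpha : (\alpha,g,\beta) \in I\}$ that is used again later in Section~6.

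There is, however, one concrete error. In the verification of $X_{F_X} \subseteq X$ you assert that $(\alpha,1,\alpha) \leq (s_\alpha,1,s_\alpha)$ and invoke downward closure. This inequality is false in general: in the paper's conventions, $(\beta,1,\beta) \leq (\gamma,1,\gamma)$ means $\beta = \gamma\beta'$, and for a vertex $v$ one has $(\alpha,1,\alpha) \leq (v,1,v)$ precisely when $r_\alpha = v$, not $s_\alpha = v$. So downward closure does not give what you want here.

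The fix is immediate and already implicit in your own argument for the other inclusion: invariance of an order ideal is symmetric (apply the defining condition to $s^*$ in place of $s$), so with $s = (\alpha,1,s_\alpha)$ you get both $ss^* = (\alpha,1,\alpha) \in X \Leftrightarrow s^*s = (s_\alpha,1,s_\alpha) \in X$. Use invariance in this direction rather than the incorrect order relation, and the proof goes through. A similar small care is needed in your $G$-invariance step: with $s = (gv,g,v)$ invariance literally gives $(gv,1,gv)\in X \Rightarrow (v,1,v)\in X$; to get the direction you want, either use $s^*$ or note the symmetry explicitly.
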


\begin{proof}

Given an ideal $I \in \mcS_{G,\Lambda}$, let
\[
	\mcV(I) := \left\{ s_{\alpha} \in \Lambda^0: (\alpha, g, \beta) \in I\right\}
\]
Conversely, given a hereditary, $G$-invariant set of vertices $F$, let $\mcI(F) = SXS$, where $X = \{(v,1,v) : v \in F \}$. Then $\mcI(F)$ is clearly an ideal of $S$. We show that $\mcV(I)$ is a hereditary, $G$-invariant set of vertices. 

First note that $v \in \mcV(I)$ if and only if $(v,1,v) \in I$. If $v \in \mcV(I)$ then there exists some $(\alpha, g, \beta) \in I$ where $v = s_{\alpha} = g s_{\beta}$. Note that 
\[(v,1,v) = (v,1,v)(\alpha, g, \beta)(\beta, g^{-1},\alpha)(v,1,v) \]
and hence $(v,1,v) \in I$. The converse is clear. 

We show that $\mcV(I)$ is a hereditary, $G$-invariant set. First suppose that $r_{e} \in \mcV(I)$ for some edge $e \in \Lambda^{1}$. Then 
\[ (s_{e},1,s_{e}) = (s_{e}, 1, e)(r_{e},1,r_{e})(e, 1, s_{e}) \in I.\]
Thus $\mcV(I)$ is a hereditary set of vertices. Next suppose that $u \in \mcV(I)$ and let $g \in G$. Then $(gu,1,gu) = (g u, g, u)(u,1,u)(u, g^{-1}, gu) \in I$, so $gu \in \mcV(I)$.

Next we show that, for an ideal $I$ of $S$, $I = \mcI( \mcV(I) )$. For the containment $I \subseteq \mcI( \mcV(I) )$, note that $(\alpha,g,\beta) \in I$ implies 
\[(\alpha, g, \beta) = (\alpha, 1, s_{\alpha})(s_{\alpha},1,s_{\alpha})(s_{\alpha},g,\beta) \in \mcI( \mcV(I) ).\] 
Let $s = (\alpha, g, \beta) \in \mcI(\mcV(I))$. Then $(\alpha, 1, \alpha) = (\alpha, g, \beta)(\beta, g^{-1}, \alpha) \in \mcI(\mcV(I))$ and therefore
\[(s_{\alpha}, 1, s_{\alpha}) = (s_{\alpha}, 1, \alpha)(\alpha, 1, \alpha)(\alpha, 1, s_{\alpha}) \in \mcI(\mcV(I)).\]
Then $(s_{\alpha}, 1, s_{\alpha}) \in S(v,1,v)S$ for some $v \in \mcV(I)$. By Lemma \ref{lem:graphideal}, there is a path $\mu$ and $h \in G$ such that $s_{\mu} = h s_{\alpha}$ and $r_{\mu} = v$. Since $\mcV(I)$ is hereditary and $G$-invariant, $s_{\mu} \in \mcV(I)$ and $s_{\alpha} = h^{-1} s_{\mu} \in \mcV(I)$. Thus $(s_{\alpha},1,s_{\alpha}) \in I$ and 
\[ (\alpha, g, \beta) = (\alpha, 1, s_{\alpha})(s_{\alpha},1,s_{\alpha})(s_{\alpha},g,\beta) \in I.\]

Finally, for a hereditary, $G$-invariant set of vertices $F$, we show that $F = \mcV(\mcI(F))$. If $v \in F$ then $(v,1,v) \in \mcI(F)$ which implies that $v \in \mcV(\mcI(F))$. Let $v \in \mcV(\mcI(F))$. Then $(v,1,v) \in \mcI(F)$, which means that there exists a path $\beta$ and $h \in G$ such that $s_{\beta} = hv$ and $r_{\beta} = u$, again by Lemma  \ref{lem:graphideal}. Since $F$ is hereditary and $G$-invariant, $v \in F$.
\end{proof}

\begin{defn} Let $\Lambda = (\Lambda^{0}, \Lambda^{1}, r, s)$ be a directed graph and let $V$ be a hereditary subset of $\Lambda^{0}$. As in \cite{MesyanMitchell}, define $\Lambda \setminus V$ to be the directed graph $\Gamma = (\Gamma^{0}, \Gamma^{1}, r_\Gamma, d_\Gamma)$ where $\Gamma^0 = \Lambda^{0} \setminus V,$
\[
 \Gamma^1 = \Lambda^{1} \setminus \{e \in \Lambda^{1} : s_{e} \in V \text{ or } r_{e} \in V\}
\]
and $r_\Gamma$ and $s_\Gamma$ denote the restrictions of $r$ and $s$ to $\Gamma^1$ respectively.
\end{defn}

Let $(G,\Lambda,\varphi)$ be a self-similar graph action with underlying directed graph $\Lambda = (\Lambda^{0}, \Lambda^{1}, r, s)$. If $V$ is a hereditary and $G$-invariant set of vertices, then the action of $G$ on $\Lambda$ restricts to an action of $G$ on $\Gamma = \Lambda \setminus V$ and the cocycle restricts to a cocycle $\varphi_V : G \times \Gamma^{1} \to G$. Moreover, it is straightforward to verify that $(G,\Lambda \setminus V,\varphi_{V})$ is a well-defined self-similar action.

Our next result is directly analogous to \cite[Theorem 7]{MesyanMitchell}. 

\begin{prop} Let $(G,\Lambda,\varphi)$ be a self-similar graph action and let $I$ be an ideal of $\mcS_{G,\Lambda}$. Then $\mcS_{G,\Lambda}/I \cong \mcS_{G,\Lambda \setminus V}$, where $V = \left\{ s_{\alpha} \in \Lambda^0: (\alpha, g, \beta) \in I\right\}$. 
\end{prop}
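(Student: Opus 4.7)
The plan is to define $\Phi : \mcS_{G,\Lambda}/I \to \mcS_{G,\Lambda \setminus V}$ to be the identity on nonzero elements (and to send $0 \mapsto 0$), and then to check that $\Phi$ is a well-defined isomorphism of inverse semigroups. The crucial preliminary observation, extracted from the proof of Proposition \ref{prop:selfsimideals}, is that $(\alpha,g,\beta) \in I$ if and only if $s_\alpha \in V$ (equivalently, $s_\beta \in V$, by the $G$-invariance of $V$ and the identity $s_\alpha = g s_\beta$). Consequently the underlying set of the Rees quotient $\mcS_{G,\Lambda}/I$ is exactly $\{(\alpha,g,\beta) \in \mcS_{G,\Lambda} : s_\alpha \notin V\} \cup \{0\}$.

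Next I would argue that this set is naturally identified with $\mcS_{G,\Lambda\setminus V}$. If $s_\alpha \notin V$, then hereditariness of $V$, applied via the contrapositive $s_e \notin V \Rightarrow r_e \notin V$ read along the edges of $\alpha$ from source to range, forces every vertex and edge of $\alpha$ to lie in $\Lambda \setminus V$, so $\alpha \in (\Lambda \setminus V)^*$. The same reasoning applied to $\beta$ (together with the $G$-invariance of $V$, which yields $s_\beta \notin V$) shows $\beta \in (\Lambda \setminus V)^*$. Conversely any element of $\mcS_{G,\Lambda\setminus V}$ is already a triple $(\alpha,g,\beta) \in \mcS_{G,\Lambda}$ with $s_\alpha \notin V$. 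Hence $\Phi$ is a bijection, and it trivially commutes with the common inversion rule $(\alpha,g,\beta)^* = (\beta, g^{-1}, \alpha)$.

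Finally I would verify that $\Phi$ respects multiplication by comparing the product formulas in the two semigroups. If $(\alpha,g,\beta)(\gamma,h,\nu)$ is nonzero in $\mcS_{G,\Lambda}$, then either $\gamma = \beta \gamma'$ or $\beta = \gamma \beta'$, and the same dichotomy governs the product in $\mcS_{G,\Lambda \setminus V}$; the extra requirement in the latter (that $\gamma'$ or $\beta'$ lies in $(\Lambda \setminus V)^*$) is automatic from $s_{\gamma'} = s_\gamma \notin V$ or $s_{\beta'} = s_\beta \notin V$ together with hereditariness. The resulting product has first component $\alpha g \gamma'$ (whose source is $g s_\gamma \notin V$ by $G$-invariance) or $\alpha$ (whose source is $s_\alpha \notin V$), so in either case it lies outside $I$. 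Thus the Rees product coincides with the $\mcS_{G,\Lambda}$ product, which in turn coincides with the product computed in $\mcS_{G,\Lambda\setminus V}$. The only obstacle is essentially bookkeeping: one must confirm that the sub-cases of the multiplication formula and the path-concatenation constraints behave identically in $\Lambda$ and $\Lambda \setminus V$, but once the characterization of $I$ by $V$ is in hand, everything reduces to applications of hereditariness and $G$-invariance.
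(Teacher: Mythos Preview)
Your proposal is correct and follows essentially the same approach as the paper. The only cosmetic difference is that the paper defines the map $\pi$ on $\mcS_{G,\Lambda}$ itself (sending elements with $s_\alpha \in V$ to $0$) and then checks that its kernel congruence is exactly the Rees congruence of $I$, whereas you work directly on the quotient; in both cases the substance is the characterization $(\alpha,g,\beta)\in I \Leftrightarrow s_\alpha\in V$ together with hereditariness and $G$-invariance of $V$.
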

\begin{proof}
For an edge $e$ in $\Lambda$, $s_e \not\in V$ implies $r_e \not\in V$, since $V$ is hereditary. Then for any path $\alpha$ in $\Lambda$, $s_{\alpha} \not\in V$ implies that $\alpha$ is a path in $\Lambda \setminus V$. Suppose $(\alpha, g, \beta) \in \mcS_{G,\Lambda}$ and $s_{\alpha} \not\in V$. Then $s_{\alpha} = g s_{\beta}$ and it follows that $s_{\beta} \not\in V$ since $V$ is $G$-invariant. Thus $\beta$ is a path in $\Lambda \setminus V$. So we can define a map $\pi : \mcS_{G,\Lambda} \to \mcS_{G,\Lambda \setminus V}$ by 
\[
	\pi( (\alpha, g, \beta)) = \begin{cases}
		(\alpha, g, \beta) & \text{ if } s_{\alpha} \not\in V, \\
		0 & \text{ otherwise. }
		\end{cases}
\]
Recall from the proof of Proposition \ref{prop:selfsimideals} that $(\alpha, g, \beta) \in I$ if and only if $s_{\alpha} \in V$. Using this fact, one can quickly verify that $\pi$ is a surjective homomorphism such that $\pi( (\alpha, g, \beta) ) = \pi( (\gamma, h, \nu) )$ if and only if $(\alpha, g, \beta) = (\gamma, h, \nu)$ or $(\alpha, g, \beta), (\gamma, h, \nu) \in I$. Thus $\mcS_{G,\Lambda}/I \cong \mcS_{G,\Lambda \setminus V}$.
\end{proof}

Next we define a condition for a self-similar graph action that is equivalent to the property that all Rees quotients of the associated inverse semigroup have a $0$-disjunctive semilattice.

\begin{defn} Let $(G,\Lambda,\varphi)$ be a self-similar graph action. An edge $e \in \Lambda^{1}$ is said to be \emph{$G$-independent} if for every edge $f \neq e$ with $r_{f} = r_{e}$, the vertex $s_{f}$ lies in an orbit different from the orbit of $s_{e}$.
\end{defn}

\begin{defn}
A self-similar graph action $(G,\Lambda,\varphi)$ is said to satisfy Condition (M) if, for every $G$-independent edge $e \in \Lambda^{1}$ there exists $g \in G$ and a path $\alpha \in \Lambda^* \setminus \Lambda^0$ with $s_{\alpha} = gs_e$ and $r_{\alpha} = r_{e}$ such that $\alpha \not\in e\Lambda^{*}$.
\end{defn}

The next result is analogous to Corollary \ref{cor:condM}, the characterization of Condition (M) for graph inverse semigroups.

\begin{prop}\label{prop:SSGAcondM} Let $(G,\Lambda,\varphi)$ be a self-similar graph action. Then $\Lambda$ satisfies Condition (M) if and only if the semilattice of $\mcS_{G,\Lambda}/I$ is $0$-disjunctive for every ideal $I$.
\end{prop}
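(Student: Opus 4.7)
The plan is to reduce the proposition to a combinatorial fact about the graph $\Lambda \setminus V$. By the previous proposition, $\mcS_{G,\Lambda}/I \cong \mcS_{G,\Lambda \setminus V}$ where $V = \mcV(I)$ is hereditary and $G$-invariant. The semilattice $E(\mcS_{G,\Lambda \setminus V})$ consists of the elements $(\alpha, 1, \alpha)$ for paths $\alpha$ in $\Lambda \setminus V$ together with $0$, and is order-isomorphic to the idempotent semilattice of the ordinary graph inverse semigroup $\mcS_{\Lambda \setminus V}$. As already observed in the paper for graph inverse semigroups, this semilattice is $0$-disjunctive precisely when no vertex of $\Lambda \setminus V$ has in-degree equal to one. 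Thus the proposition reduces to: $(G,\Lambda,\varphi)$ satisfies Condition (M) if and only if, for every hereditary $G$-invariant $V \subseteq \Lambda^0$, no vertex of $\Lambda \setminus V$ has in-degree exactly one.

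For the forward direction I would argue the contrapositive. Suppose $v$ is a vertex of $\Lambda \setminus V$ with in-degree one and unique incoming edge $e$. Then $s_e \notin V$, while every other edge $f$ with $r_f = v$ must satisfy $s_f \in V$. Since $V$ is $G$-invariant and $s_e \notin V$, the orbit $Gs_e$ lies in $\Lambda^0 \setminus V$, so $s_f \notin Gs_e$ for each such $f$; hence $e$ is $G$-independent in $\Lambda$. To see Condition (M) fails for $e$, let $\alpha$ be any non-vertex path with $s_\alpha = gs_e$ and $r_\alpha = r_e$, and let $f$ be its leftmost edge. Beginning from $s_\alpha = gs_e \notin V$ and iterating the contrapositive of hereditariness (if $s_h \notin V$ then $r_h \notin V$), every vertex traversed by $\alpha$ lies outside $V$; in particular $s_f \notin V$. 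So $f$ is an edge of $\Lambda \setminus V$ with range $v$, forcing $f = e$ by uniqueness, and thus $\alpha \in e\Lambda^*$.

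For the converse, again by contrapositive, suppose Condition (M) fails at some $G$-independent edge $e$. Define $F$ to be the set of vertices $u \in \Lambda^0$ for which there exist $g \in G$ and a path (possibly of length zero) in $\Lambda$ from $g s_e$ to $u$, and let $V = \Lambda^0 \setminus F$; note $Gs_e \subseteq F$ and $r_e \in F$. The set $V$ is $G$-invariant because axioms (E4) and (E5) let one transport witnessing paths by elements of $G$, and $V$ is hereditary because if an edge $h$ has $s_h \in F$, then concatenating a path from some $g s_e$ to $s_h$ with $h$ shows $r_h \in F$. In $\Lambda \setminus V$, the edge $e$ is an incoming edge at $r_e$. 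If some $f \neq e$ with $r_f = r_e$ also survived in $\Lambda \setminus V$, then $s_f \in F$, so there would exist a path $\alpha_0$ from some $gs_e$ to $s_f$, making $f\alpha_0$ a non-vertex path from $g s_e$ to $r_e$ whose leftmost edge is $f$. Failure of Condition (M) would then force $f = e$, a contradiction. Hence $r_e$ has in-degree one in $\Lambda \setminus V$, so the semilattice of the corresponding Rees quotient fails to be $0$-disjunctive.

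The main subtlety is constructing the set $V$ correctly in the converse direction and keeping the paper's path conventions straight: hereditary here means $r_h \in V \Rightarrow s_h \in V$, so that $\Lambda^0 \setminus V$ is closed under passing from the source to the range of an edge, and $e\Lambda^*$ refers to paths whose \emph{leftmost} edge is $e$.
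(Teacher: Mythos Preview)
Your proof is correct and follows essentially the same strategy as the paper: both reduce the $0$-disjunctive condition on $E(\mcS_{G,\Lambda}/I)$ to the combinatorial statement that no vertex of $\Lambda \setminus V$ has in-degree one, and both directions turn on the same interplay between hereditariness, $G$-invariance, and the definition of $G$-independence. Your forward direction is exactly the contrapositive of the paper's, with the same key chain: the unique incoming edge $e$ is forced to be $G$-independent, and any candidate path $\alpha$ with $s_\alpha \in Gs_e$ must stay outside $V$, forcing its leftmost edge to be $e$.

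The one genuine difference is in the converse. The paper takes the ideal $I$ generated by $\{(s_f,1,s_f) : r_f = r_e,\, f \neq e\}$, argues that $s_e \in \mcV(I)$ (otherwise $r_e$ would have in-degree one in $\Lambda \setminus \mcV(I)$), and then invokes Lemma~\ref{lem:graphideal} to extract a path $\alpha \in f\Lambda^*$ with $s_\alpha = gs_e$, $r_\alpha = r_e$. You instead build $V$ directly as the complement of the set of vertices reachable from $Gs_e$, verify by hand that $V$ is hereditary and $G$-invariant, and observe that failure of Condition~(M) at $e$ kills every other incoming edge at $r_e$ in $\Lambda \setminus V$. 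Your construction is a little more elementary in that it avoids the detour through ideals and Lemma~\ref{lem:graphideal}; the paper's version, on the other hand, makes more explicit use of the ideal--vertex-set correspondence just developed, which fits the narrative of the section. Either way the content is the same.
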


\begin{proof}
Suppose that $\Lambda$ satisfies Condition (M) and let $I$ be an ideal in $\mcS_{G,\Lambda}$. Then $\mcS_{G,\Lambda}/I$ is isomorphic to $\mcS_{G,\Lambda \setminus V}$ where $V = \{v \in \Lambda^0 : (v,1,v) \in I\}$ is a hereditary, $G$-invariant set of vertices. To prove that the semilattice of $\mcS_{G,\Lambda}/I$ is $0$-disjunctive, we show that no vertex in $\Lambda \setminus V$ has in-degree 1.

Let $e$ be an edge in $\Lambda \setminus V$ and $u = r_{e}$. Consider the set of edges $F = \{f \in \Lambda^{1} \setminus \{e\} : r_{f} = u\}$. First suppose that $s_{f} \in V$ for each $f \in F$. Since $V$ is $G$-invariant and $V$ does not contain $s_{e}$, it follows that $e$ is $G$-independent. Since $\Lambda$ satisfies Condition (M), there exists $g \in G$ and a path $\alpha \in \Lambda^*$ with $s_{\alpha} = gs_{e}$, $r_{\alpha} = u$, and $\alpha \in f\Lambda^*$ for some $f \in F$. Since $V$ is hereditary and $G$-invariant, $s_{f} \in V$ implies that $s_{e} \in V$, a contradiction. Therefore $s_{f} \not\in V$ for some $f \in F$. Thus, in $E \setminus V$, $u$ has in-degree greater than 1. 

Next suppose for each hereditary, $G$-invariant set of vertices $V$, that no vertex in $\mcS_{G,\Lambda \setminus V}$ has in-degree 1. Let $e$ be a $G$-independent edge in $\Lambda$. Again, let $u = r_{e}$ and $F =\{f \in \Lambda^{1} \setminus \{e\} : r_{f} = u\}$. Notice $F$ is nonempty since no vertex in $\Lambda$ has in-degree 1. Let 
\[I = \mcS_{G,\Lambda} \, \{(s_{f},1,s_{f}):f \in F\}\,  \mcS_{G,\Lambda}.\] 
If $s_{e} \not\in \mcV(I)$ then $r_{e} \not\in \mcV(I)$ and hence $e$ is an edge in $\Lambda \setminus \mcV(I)$ of in-degree 1, a contradiction. Thus $s_{e} \in \mcV(I)$ and by Lemma \ref{lem:graphideal}, there exists some $g$ in $G$ and a path $\alpha$ in $f\Lambda^*$ for some $f \in F$ with $s_{\alpha} = gs_{e}$ and $r_{\alpha} = r_{e}$. So $\mcS_{G,\Lambda}$ satisfies Condition (M).
\end{proof}

Next we develop necessary and sufficient conditions for each quotient $\mcS_{G,\Lambda} / I$ to be fundamental. This will depend on the action of $G$ being faithful on certain sets of paths.

\begin{defn} A self-similar graph action $(G,\Lambda,\varphi)$ is said to be \emph{faithful} if $G$ acts faithfully on $v\Lambda^*$ for every vertex $v$ in $\Lambda$. We say $(G,\Lambda,\varphi)$ is \emph{strongly faithful} if $(G,\Lambda \setminus V,\varphi_{V})$ is faithful for every hereditary, $G$-invariant set $V \subseteq \Lambda^0$.
\end{defn}

\begin{prop}\label{prop:SSGAfund} Let $(G,\Lambda,\varphi)$ be a self-similar graph action. For $s,t$ in $\mcS_{G,\Lambda}$ we have $s \mcmu t$ if and only if $s = (\alpha, g, \beta),\, t = (\alpha, h, \beta)$, and for all $\gamma' \in s_{\beta}\Lambda^*$, $g \gamma' = h \gamma'$. Therefore, $\mcS_{G,\Lambda}$ is fundamental if and only if $(G,\Lambda,\varphi)$ is faithful.

\end{prop}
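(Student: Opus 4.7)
The plan is to compute $sfs^*$ explicitly for a generic nonzero idempotent $f = (\gamma, 1, \gamma)$ and compare to $tft^*$. Since $\mcmu \subseteq \mcH$, the hypothesis $s \mcmu t$ already forces $ss^* = tt^*$ and $s^*s = t^*t$; a short direct computation in $\mcS_{G,\Lambda}$ gives $ss^* = (\alpha, 1, \alpha)$ and $s^*s = (\beta, 1, \beta)$ for $s = (\alpha, g, \beta)$, so $s$ and $t$ must share the same $\alpha$ and $\beta$. Thus from the outset we may write $s = (\alpha, g, \beta)$ and $t = (\alpha, h, \beta)$ for some $g, h \in G$.

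The heart of the argument is then a case split on how $\gamma$ relates to $\beta$ in the prefix order. When $\gamma$ and $\beta$ are incomparable, both $sf$ and $tf$ vanish, so $sfs^* = 0 = tft^*$ automatically. When $\beta$ is a (possibly trivial) prefix of $\gamma$ with suffix $\beta'$, the second case of the $\mcS_{G,\Lambda}$-multiplication together with $\varphi(1, \beta') = 1$ (which follows from (E2) and (E3)) yields $sf = s$, hence $sfs^* = ss^* = (\alpha,1,\alpha) = tt^* = tft^*$ with no constraint on $g, h$. The only case imposing a nontrivial condition is $\gamma = \beta \gamma'$ with $\gamma' \in s_\beta \Lambda^*$. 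Here the first case of the multiplication, applied twice, together with the cocycle identity (E2), produces
\[
sfs^* = (\alpha \cdot g\gamma',\, 1,\, \alpha \cdot g\gamma'), \qquad tft^* = (\alpha \cdot h\gamma',\, 1,\, \alpha \cdot h\gamma'),
\]
and equality reduces by cancellation of the common prefix $\alpha$ to the path equation $g\gamma' = h\gamma'$. Letting $\gamma'$ range over $s_\beta \Lambda^*$ gives the stated characterization of $\mcmu$.

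The second statement is then a formal consequence. If $(G, \Lambda, \varphi)$ is faithful and $s \mcmu t$, the characterization supplies $g\gamma' = h\gamma'$ for every $\gamma' \in s_\beta \Lambda^*$, so faithfulness on $s_\beta \Lambda^*$ gives $g = h$ and $s = t$. Conversely, suppose $\mcS_{G,\Lambda}$ is fundamental and that $g, h \in G$ agree on $v \Lambda^*$ for some vertex $v$; taking $\gamma' = v$ yields $gv = hv$, so with $\alpha = gv$ and $\beta = v$ both $(\alpha, g, v)$ and $(\alpha, h, v)$ lie in $\mcS_{G,\Lambda}$ and are $\mcmu$-related by the characterization, hence equal, giving $g = h$. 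The only real obstacle is bookkeeping the two multiplication cases and the interleaved cocycle substitutions in computing $sfs^*$; no conceptual difficulty arises.
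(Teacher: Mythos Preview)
Your argument is correct and follows essentially the same route as the paper: reduce via $\mcmu \subseteq \mcH$ to $s=(\alpha,g,\beta)$, $t=(\alpha,h,\beta)$, then split on the prefix relation between $\gamma$ and $\beta$, with the only constraining case being $\gamma = \beta\gamma'$, where $sfs^* = (\alpha g\gamma',1,\alpha g\gamma')$. You also spell out the deduction of the ``fundamental $\Leftrightarrow$ faithful'' statement, which the paper leaves implicit.

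One slip to fix: in your second case you write ``when $\beta$ is a (possibly trivial) prefix of $\gamma$ with suffix $\beta'$,'' but what you actually use (and what the second multiplication rule $\beta = \gamma\beta'$ requires) is that $\gamma$ is a prefix of $\beta$. As written, your second and third cases describe the same situation $\gamma = \beta\cdot(\text{suffix})$. Swap the roles of $\beta$ and $\gamma$ in that sentence and the case split is clean; the computations themselves are already correct.
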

\begin{proof}
Let $s,t \in \mcS_{G,\Lambda}$ and assume that $s \mu t$. Then $s^*s = t^*t$ and $s s^* = t t^*$. From this one can quickly see that $s = (\alpha, g, \beta)$ and $t = (\alpha, h, \beta)$ where $s_{\alpha} = h s_{\beta} = g s_{\beta}$. Let $\gamma' \in s_{\beta}\Lambda^*$. Then
$s(\beta\gamma', 1, \beta\gamma')s^* = (\alpha g \gamma', 1, \alpha g \gamma')$ and $t(\beta\gamma', 1, \beta\gamma')t^* = (\alpha h \gamma', 1, \alpha h \gamma')$. Hence $g \gamma' = h \gamma'$ for all $\gamma' \in s_{\beta}\Lambda^*$.

Next suppose that $s = (\alpha, g, \beta),\, t = (\alpha, h, \beta)$, and for all $\gamma' \in s_{\beta}\Lambda^*$, $g \gamma' = h \gamma'$. Let $e = (\gamma, 1, \gamma)$ be an idempotent in $\mcS_{G,\Lambda}$. There are three cases to consider. If $\beta = \gamma \beta'$ then  $s e s^* = (\alpha, 1, \alpha) = t e t^*$. If $\gamma = \beta \gamma'$, then 
\[ ses^* = (\alpha g \gamma', 1, \alpha g \gamma') = (\alpha h \gamma', 1, \alpha h \gamma') = tet^*.\]
Otherwise $ses^* = 0 = tet^*$. Thus $s \mu t$.
\end{proof}

\begin{thm} Let $(G,\Lambda,\varphi)$ be a self-similar graph action. Then every congruence on $\mcS_{G,\Lambda}$ is a Rees congruence if and only if $(G,\Lambda,\varphi)$ is strongly faithful and satisfies Condition (M).
\end{thm}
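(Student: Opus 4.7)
The plan is to derive the statement by stitching together the earlier characterizations. By Theorem \ref{thm:Rees}, every congruence on $\mcS_{G,\Lambda}$ is a Rees congruence if and only if $\mcS_{G,\Lambda}/I$ is fundamental and has a $0$-disjunctive semilattice for every ideal $I$ of $\mcS_{G,\Lambda}$. So the task reduces to translating each of these two conditions on all quotients into the two hypotheses on $(G,\Lambda,\varphi)$.

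First, by Proposition \ref{prop:SSGAcondM}, the semilattice of $\mcS_{G,\Lambda}/I$ is $0$-disjunctive for every ideal $I$ if and only if $(G,\Lambda,\varphi)$ satisfies Condition (M). This handles one half of the characterization at the level of quotient semilattices.

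For the fundamental condition, I would use Proposition \ref{prop:selfsimideals} together with the preceding proposition identifying the quotient. Given an ideal $I$ of $\mcS_{G,\Lambda}$, set $V = \mcV(I)$, which is hereditary and $G$-invariant. Then $\mcS_{G,\Lambda}/I \cong \mcS_{G,\Lambda \setminus V}$, and applying Proposition \ref{prop:SSGAfund} to the self-similar graph action $(G,\Lambda \setminus V, \varphi_V)$ shows that $\mcS_{G,\Lambda}/I$ is fundamental if and only if $(G,\Lambda \setminus V, \varphi_V)$ is faithful. Since the correspondence $I \leftrightarrow V$ from Proposition \ref{prop:selfsimideals} is a bijection between ideals of $\mcS_{G,\Lambda}$ and hereditary, $G$-invariant subsets of $\Lambda^0$, requiring this to hold for every ideal $I$ is precisely the definition of $(G,\Lambda,\varphi)$ being strongly faithful.

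Combining these two equivalences with Theorem \ref{thm:Rees} yields the desired biconditional. The main conceptual point, rather than any genuine obstacle, is simply to notice that Theorem \ref{thm:Rees} cleanly separates into a ``fundamental'' condition and a ``$0$-disjunctive'' condition that are controlled independently by strong faithfulness and Condition (M), respectively; no further computation beyond invoking Propositions \ref{prop:selfsimideals}, \ref{prop:SSGAcondM}, and \ref{prop:SSGAfund} is required.
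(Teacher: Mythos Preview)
Your proposal is correct and follows the same route as the paper, which simply states that the result is immediate from Theorem \ref{thm:Rees}, Proposition \ref{prop:SSGAcondM}, and Proposition \ref{prop:SSGAfund}. You have just made explicit the intermediate use of the quotient identification $\mcS_{G,\Lambda}/I \cong \mcS_{G,\Lambda \setminus V}$ and the ideal--vertex correspondence of Proposition \ref{prop:selfsimideals} that are implicit in the paper's one-line proof.
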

\begin{proof} This result follows immediately from Theorem \ref{thm:Rees}, Proposition \ref{prop:SSGAcondM}, and Proposition \ref{prop:SSGAfund}.
\end{proof}

Briefly, we shall discuss the various hypotheses that were introduced in sections 4 and 5. For a finite directed graph $\Lambda$, the semilattice of $\mcS_{G,\Lambda}$ satisfies the trapping condition and admits finite covers. The proof is almost identical to the one given in Example \ref{exmp:directedgraphs} for $\mcS_{\Lambda}$. To ensure that $\mcS_{G,\Lambda}$ is Hausdorff, one must assume for any $g \in G$ that there are finitely many strongly fixed paths. A path $\alpha$ is \emph{strongly fixed} by $g$ if $g\alpha = \alpha$ and $\varphi(g,\alpha)=1$. (See \cite[Theorem 12.2]{ExelPardoGraph}.) Finally, if $G$ is amenable then the tight groupoid of $\mcS_{G,\Lambda}$ is amenable by \cite[Corollary 10.18]{ExelPardoGraph}.

\begin{cor} Let $(G,\Lambda,\varphi)$ be a strongly faithful self-similar graph action such that every $g \in G$ has finitely many strongly fixed paths. Assume that $\Lambda$ satisfies Condition (M) and let $\G$ be the tight groupoid of $\mcS_{G,\Lambda}$. Then
\begin{enumerate}

\item $\G$ is strongly effective.

\item The basic ideals of the Steinberg algebra $A_R(\G)$ are in bijective correspondence with saturated $G$-invariant sets of vertices in $\Lambda$.

\item If $G$ is amenable, then the ideals of $C^*(\G)$ are in bijective correspondence with saturated $G$-invariant sets of vertices in $\Lambda$.
\end{enumerate}
\end{cor}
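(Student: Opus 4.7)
The plan is to reduce this corollary to a chain of earlier results in the paper, with the bulk of the work being the verification of the standing technical hypotheses and the translation of saturated ideals to saturated vertex sets. First I would record that $\mcS_{G,\Lambda}$ is Hausdorff (by \cite[Theorem 12.2]{ExelPardoGraph}, using the assumption that every $g \in G$ has only finitely many strongly fixed paths) and that its idempotent semilattice both satisfies the trapping condition and admits finite covers. The latter follows exactly as in Example \ref{exmp:directedgraphs}: the semilattice of $\mcS_{G,\Lambda}$ is generated, via the invariance argument there, by the vertex idempotents $(v,1,v)$, and $\Lambda$ is finite. These are precisely the standing hypotheses needed for Theorem \ref{thm:condK} and for the two lattice isomorphism theorems in Section~5.

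For (1), the preceding theorem says that strong faithfulness together with Condition (M) forces every congruence on $\mcS_{G,\Lambda}$ to be a Rees congruence. By the corollary to Theorem \ref{thm:Rees}, $\mcS_{G,\Lambda}$ then satisfies Condition (K), and Theorem \ref{thm:condK} immediately delivers strong effectiveness of $\G$. For (2) and (3), the Steinberg-algebra theorem applies under the Hausdorff/trapping/finite-cover hypotheses already verified, yielding a lattice isomorphism between the saturated ideals of $\mcS_{G,\Lambda}$ and the basic ideals of $A_R(\G)$. When $G$ is amenable, \cite[Corollary 10.18]{ExelPardoGraph} gives amenability of $\Gt(\mcS_{G,\Lambda})$, so the $C^*$-algebra version applies as well and gives the corresponding lattice isomorphism with ideals of $C^*(\G)$.

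It remains to identify the saturated ideals of $\mcS_{G,\Lambda}$ with the saturated $G$-invariant sets of vertices, and this is where the main work lies. Proposition \ref{prop:selfsimideals} gives a lattice bijection $I \leftrightarrow \mcV(I)$ between arbitrary ideals of $\mcS_{G,\Lambda}$ and hereditary, $G$-invariant sets of vertices, so I only need to show that $I$ is saturated in the inverse-semigroup sense if and only if $\mcV(I)$ is saturated in the graph-theoretic sense. The forward direction amounts to observing that the set of edges with range $v$ together determine a cover $(v,1,v) \covby \{(e,1,e) : r_e = v\}$ in $E(\mcS_{G,\Lambda})$, and $G$-invariance of $\mcV(I)$ lets us replace $s_e$ by an arbitrary representative of its orbit; the reverse direction requires showing that any cover of $(v,1,v)$ in $E(\mcS_{G,\Lambda})$ can be refined to one coming from edges at $v$, which is the step I expect to be the main technical obstacle and where the trapping condition of $E(\mcS_{G,\Lambda})$, together with the fact that $\Lambda$ has no sources, should do the work. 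Once this dictionary is established, assertions (2) and (3) follow by composing the two bijections.
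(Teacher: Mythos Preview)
Your proposal is correct and follows exactly the route the paper intends: the paper gives no explicit proof of this corollary, relying instead on the preceding discussion paragraph (verifying Hausdorff, trapping, finite covers, and amenability) together with the theorems of Section~5 and the previous theorem in Section~6. You have reproduced that chain accurately.

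You actually go further than the paper in one respect: you flag the need to translate between saturated ideals of $\mcS_{G,\Lambda}$ and saturated $G$-invariant vertex sets, and you are right that this translation is needed for parts (2) and (3) as stated but is not written out anywhere in the paper. Two minor comments on your sketch of that dictionary. In the forward direction the remark about $G$-invariance is unnecessary: the cover $(v,1,v)\covby\{(e,1,e):r_e=v\}$ already does the job directly, since $(e,1,e)\in I$ if and only if $s_e\in\mcV(I)$. In the reverse direction the trapping condition is a red herring; the clean argument is an induction on the maximum length appearing in a cover $(v,1,v)\covby\{(\gamma_i,1,\gamma_i)\}$, partitioning the $\gamma_i$ by their first edge to obtain shorter covers of each $(s_e,1,s_e)$, and then invoking the graph-theoretic saturation at the end. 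The ``no sources'' hypothesis is used precisely here.
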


\section{Some Remarks on Uniqueness Theorems for Inverse Semigroups}

In light of the earlier results in this paper, we now make some observations about the hypotheses of the two main uniqueness 
theorems in \cite{LaLondeMilan}. We restate the first of these theorems below.

\begin{thm}[{\cite[Theorem 4.4]{LaLondeMilan}}]
\label{thm_uniqueness} 	
	Let $S$ be a Hausdorff, cryptic inverse semigroup, and let $Z$ denote the centralizer of $E(S)$. A homomorphism $\varphi : C_r^*(S) \to A$ is injective if 
	and only if $\varphi \vert_{C_r^*(Z)}$ is injective.
\end{thm}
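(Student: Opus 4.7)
The plan is to apply the generalized uniqueness theorem of \cite{BNRSW}, which asserts that for a suitable Hausdorff \'etale groupoid $G$, a $*$-homomorphism $\pi : C_r^*(G) \to A$ is injective if and only if its restriction to $C_r^*(\iso(G)^\circ)$ is injective. The whole strategy is therefore to translate the hypotheses of the theorem into statements about the universal groupoid $\G(S)$, and then identify the centralizer subalgebra $C_r^*(Z)$ with the isotropy-interior subalgebra on the groupoid side.

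First, since $S$ is Hausdorff, Paterson's construction gives us a Hausdorff \'etale groupoid $\G(S)$ together with a canonical isomorphism $C_r^*(S) \cong C_r^*(\G(S))$. Under this isomorphism the subalgebra $C_r^*(Z)$ is carried onto the reduced $C^*$-algebra of the subgroupoid $\bigcup_{z \in Z} \Theta(z, N^{z^*z})$. So the crux of the proof reduces to establishing the identification
\[
	\iso(\G(S))^\circ \;\cong\; \G(Z),
\]
as open topological subgroupoids of $\G(S)$. One direction is straightforward: if $s \in Z$, then $sfs^* = fss^* \leq ss^*$ for every idempotent $f$, so $\beta_s$ fixes every filter $F \in N^{s^*s}$ setwise, and consequently the full bisection $\Theta(s, N^{s^*s})$ lies in the isotropy bundle. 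Hence $[s, F] \in \iso(\G(S))^\circ$ for each such $F$.

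The converse direction is the substantive step and is precisely where the cryptic hypothesis enters. Given $[s, F] \in \iso(\G(S))^\circ$, there is a basic open neighborhood $\Theta(s, U)$ of $[s, F]$ contained in $\iso(\G(S))$, and for every $F' \in U$ we have $\beta_s(F') = F'$. The aim is to produce an idempotent $e \in F$ such that $se \in Z$, so that $[s, F] = [se, F]$ is represented by an element of $Z$. Unwinding definitions, one finds that the neighborhood condition forces $s$ to be $\mcH$-related to $s^*s$ on an open set of filters. The hypothesis that $S$ is cryptic, i.e.\ $\mcH = \mu$, then upgrades this local $\mcH$-relation to a $\mu$-relation, which is precisely the statement that (after restricting by a suitable idempotent in $F$) the element $se$ commutes with every element of $E(S)$. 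This is the only point in the argument that uses crypticity, and it is the main obstacle: without $\mcH = \mu$, interior isotropy would correspond to a larger ``normalizer'' than $Z$, and the identification above would fail.

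Once the identification $\iso(\G(S))^\circ \cong \G(Z)$ is in place, observe that $Z$ is a Clifford semigroup (a semilattice of groups), so $\G(Z)$ is a group bundle over the character space of $E(S)$ and its reduced $C^*$-algebra coincides with $C_r^*(Z)$ under the Paterson identification. Applying the \cite{BNRSW} generalized uniqueness theorem to $\G(S)$ then yields precisely the statement: $\varphi : C_r^*(S) \to A$ is injective if and only if $\varphi \vert_{C_r^*(Z)}$ is injective. The ordering of the steps is therefore (i) pass to the groupoid side via Paterson, (ii) use crypticity to identify $\iso(\G(S))^\circ$ with $\G(Z)$, (iii) invoke \cite{BNRSW}, with step (ii) being the technical heart of the proof.
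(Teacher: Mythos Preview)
This theorem is not proved in the present paper at all: it is merely restated from the authors' earlier work \cite[Theorem 4.4]{LaLondeMilan}, as the surrounding text makes explicit (``We restate the first of these theorems below''). Consequently there is no proof here to compare your proposal against.

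That said, your outline is consistent with what the introduction of this paper tells us about how the original result was obtained---namely, by applying the generalized uniqueness theorem for \'etale groupoids from \cite{BNRSW}. Your identification of the key technical step as showing $\iso(\G(S))^\circ \cong \G(Z)$, with crypticity used to upgrade a local $\mathcal{H}$-relation to a $\mu$-relation, is a plausible reconstruction of the argument in \cite{LaLondeMilan}, and the counterexample given immediately after the restatement (the non-cryptic semigroup $I_2$) confirms that crypticity is genuinely needed somewhere in that identification. But to actually verify your sketch you would need to consult \cite{LaLondeMilan} itself, not this paper.
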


We give a counterexample showing that the hypothesis that $S$ is cryptic is necessary.
The inverse semigroup $S$ in our example is finite, so we may take advantage of the fact that $C^*_{r}(S) = \bbC_{0}(S)$.

\begin{exmp} Consider the inverse semigroup $I_2$ of partial bijections on the set $\{1,2\}$. This is a fundamental inverse semigroup with 7 elements: $\{I, X, E_{11}, E_{12}, 
E_{22},  E_{21}, 0\}$ where $I$ is the identity on $\{1,2\}$, $X$ is the transposition of $1$ and $2$, $E_{ij}$ has singleton domain and range with $E_{ij}(j) = i$, and $0$ is the 
empty function. There is a homomorphism $\pi: I_2 \to M_3(\bbC)$ defined by 

\begin{align*}
\pi(I) &= \left[ \begin{array}{rrr} 1 & 0 & 0 \\ 0 & 1 & 0 \\ 0 & 0 & 1  \end{array}\right] & \pi(X) &= \left[ \begin{array}{rrr} -1 & 0 & 0 \\ 0 & 0 & 1 \\ 0 & 1 & 0  \end{array}\right] & \pi(E_{11}) &= \left[ \begin{array}{rrr} 0 & 0 & 0 \\ 0 & 1 & 0 \\ 0 & 0 & 0  \end{array}\right] \\
\pi(E_{12}) &= \left[ \begin{array}{rrr} 0 & 0 & 0 \\ 0 & 0 & 1 \\ 0 & 0 & 0  \end{array}\right] & \pi(E_{22}) &= \left[ \begin{array}{rrr} \phantom{.}0\phantom{.} & 0 & 0 \\ \phantom{.}0\phantom{.} & 0 & 0 \\ \phantom{.}0\phantom{.} & 0 & 1  \end{array}\right] & \pi(E_{21}) &= \left[ \begin{array}{rrr} 0 & 0 & 0 \\ 0 & 0 & 0 \\ 0 & 1 & 0  \end{array}\right]
\end{align*}
and $\pi(0) = 0$. Now we can extend linearly to define a $*$-homomorphism $\pi : \bbC_{0}I_2 \to M_3(\bbC)$. Since $\{ \pi(I), \pi(E_{11}), \pi(E_{22}) \}$ is a linearly independent
set, $\restr{\pi}{\bbC_{0}Z(E)}$ is injective. Yet $\pi$ is not injective since 
\[
\pi( I + X - E_{11} - E_{12} - E_{22} - E_{21}) = 0.
\]

\end{exmp}

Naturally, one is still left to wonder why the cryptic hypothesis is necessary. There is the following ``uniqueness theorem'' at the level of inverse semigroups that does not require any such hypothesis. As the result is well known to semigroup theorists, we leave the proof to the reader.

\begin{thm}
\label{thm:injective}
Let $\pi : S \to T$ be a homomorphism between inverse semigroups. Then the following are equivalent
\begin{enumerate}
\item $\pi$ is injective.
\item $\restr{\pi}{Z(E)}$ is injective.
\item $\pi$ is idempotent pure and idempotent separating.
\end{enumerate}
\end{thm}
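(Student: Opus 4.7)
My plan is to prove the cyclic chain $(1) \Rightarrow (2) \Rightarrow (3) \Rightarrow (1)$. The first implication is trivial since $Z(E) \subseteq S$.

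For $(2) \Rightarrow (3)$, idempotent separating is immediate from $E(S) \subseteq Z(E)$. For idempotent pure, suppose $\pi(s)$ is an idempotent (hence self-adjoint). Then $\pi(s^*s) = \pi(s)^*\pi(s) = \pi(s) = \pi(s)\pi(s)^* = \pi(ss^*)$, and injectivity on $Z(E)$ forces $s^*s = ss^* =: e$ together with $\pi(s) = \pi(e)$. For each $f \in E$, the idempotents $sfs^*$ and $ef = efe$ have the same image under $\pi$, so they are equal; this is precisely the statement $s \mcmu e$. To conclude I will invoke the standard fact that $Z_e \subseteq Z(E)$---a short verification using $ss^* = e$ and $sfs^* = ef$ gives $sf = sfs^*s = efs = fes = fs$, where the last step uses $es = ss^*s = s$. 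Then injectivity on $Z(E)$ applied to $\pi(s) = \pi(e)$ yields $s = e$, so $s$ is idempotent.

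For $(3) \Rightarrow (1)$, suppose $\pi(s) = \pi(t)$. Applying idempotent separating to the pairs $(s^*s, t^*t)$ and $(ss^*, tt^*)$ gives $s \mcH t$, so both elements lie in the $\mcH$-class $H_e$ where $e = s^*s = t^*t$. Next, $\pi(s^*t) = \pi(t^*t) = \pi(e)$ is idempotent, so by idempotent pure $s^*t$ is an idempotent, and a direct calculation using $tt^* = ss^*$ shows $(s^*t)(s^*t)^* = (s^*t)^*(s^*t) = e$, i.e., $s^*t \in H_e$. Since $H_e$ is a group and $e$ is its only idempotent, $s^*t = e$; multiplying on the left by $s$ and using $ss^* = tt^*$ gives $s = ss^*s = ss^*t = tt^*t = t$. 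The subtlest step is the containment $Z_e \subseteq Z(E)$ used in $(2) \Rightarrow (3)$---that is where one must upgrade the congruence-theoretic relation $\mu$ to honest commutation with idempotents; the remainder is bookkeeping inside $\mcH$-classes combined with the fact that a group $\mcH$-class has a unique idempotent.
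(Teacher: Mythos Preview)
The paper does not actually supply a proof of this theorem; it states that ``the result is well known to semigroup theorists'' and leaves the proof to the reader. Your argument is correct and fills this gap cleanly: the cyclic chain $(1)\Rightarrow(2)\Rightarrow(3)\Rightarrow(1)$ works, the key step $(2)\Rightarrow(3)$ correctly establishes $s\mcmu e$ and then uses the computation $sf = sfs^*s = (ef)s = f(es) = fs$ to place $s$ in $Z(E)$, and the $(3)\Rightarrow(1)$ step is a standard $\mcH$-class argument.
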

%

Finally, we comment on the second uniqueness theorem from \cite{LaLondeMilan} which is restated below.

\begin{thm}[{\cite[Theorem 5.7]{LaLondeMilan}}]
\label{thm_tightuniqueness} 
	Let $S$ be a Hausdorff inverse semigroup with semilattice of idempotents $E$ and centralizer $Z$, and suppose $E$ is $0$-disjunctive. Then a homomorphism
	$\varphi : C_r^*(\Gt(S)) \to A$ is injective if and only if $\varphi \vert_{C_r^*(\Gt(Z))}$ is injective.
\end{thm}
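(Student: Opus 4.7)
The plan is to exploit the generalized uniqueness theorem for \'{e}tale groupoids from \cite{BNRSW}, which says (roughly) that a homomorphism out of $C^*_r(G)$ is injective if and only if its restriction to $C^*_r(H)$ is injective whenever $H$ is an open, wide subgroupoid of $G$ containing $\iso(G)^\circ$. The forward direction of our theorem is immediate, since $C^*_r(\Gt(Z))$ embeds into $C^*_r(\Gt(S))$ via the natural inclusion $Z \hookrightarrow S$. For the converse, the key will be to identify $\Gt(Z)$ with the interior of the isotropy bundle $\iso(\Gt(S))^\circ$, after which the BNRSW theorem delivers the result immediately.

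First I would verify that $Z$ is an inverse subsemigroup of $S$ with $s^*s = ss^*$ for every $s \in Z$, and that the inclusion $Z \hookrightarrow S$ induces an embedding $\Gt(Z) \hookrightarrow \Gt(S)$ as an open subgroupoid with full unit space. The inclusion $\Gt(Z) \subseteq \iso(\Gt(S))^\circ$ is the easier half: for $s \in Z$ and any tight filter $F$ with $s^*s \in F$, the commuting relation $sfs^* = fss^*$ for $f \in E$ gives $\beta_s(F) = F$, so $[s,F] \in \iso(\Gt(S))$; moreover the open bisection $\Theta(s, D^{s^*s})$ is contained in $\iso(\Gt(S))$, witnessing that $[s,F]$ is actually interior.

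The harder inclusion $\iso(\Gt(S))^\circ \subseteq \Gt(Z)$ is where the $0$-disjunctive hypothesis enters. Suppose $[s,F]$ belongs to the interior of the isotropy, so there exists a basic open set $D(e; e_1, \ldots, e_n)$ containing $F$ such that $\beta_s(F') = F'$ for every tight filter $F' \in D(e; e_1, \ldots, e_n)$. The strategy is to produce some $f \in F$ with $f \leq e$ and $fe_i = 0$ for all $i$ (possible for general $S$ after localizing, and cleanly so if one assumes the trapping condition), replace $s$ by $sf$, and then show that the resulting element acts trivially on every idempotent that is dominated by $f$. The $0$-disjunctive hypothesis ensures the semilattice below $f$ is ``rich enough'' to separate points: if $sf$ did not commute with some idempotent in $f^\downarrow$, then one could produce a nonzero idempotent $g \leq f$ disjoint from the witness, on which $sf$ would fail to be isotropy, contradicting the hypothesis on the neighborhood. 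After this localization, $sf \in Z$ and $[s,F] = [sf,F] \in \Gt(Z)$.

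The main obstacle will be executing the previous paragraph rigorously---translating the topological statement ``$[s,F]$ lies in the interior of the isotropy'' into the purely algebraic conclusion ``$s$ (up to restriction by an idempotent in $F$) belongs to $Z$''. This is essentially a semigroup-theoretic shadow of the equivalence $\text{(1)} \Leftrightarrow \text{(3)}$ of Theorem \ref{thm:condL}, but applied ``pointwise'' at a single filter rather than globally. Once $\Gt(Z) = \iso(\Gt(S))^\circ$ is established, the hypotheses of the BNRSW theorem are met: $\Gt(Z)$ is an open wide subgroupoid containing the interior of the isotropy, $\Gt(S)$ is Hausdorff and \'{e}tale, and the theorem guarantees that a homomorphism $\varphi : C^*_r(\Gt(S)) \to A$ is injective if and only if $\varphi|_{C^*_r(\Gt(Z))}$ is injective, completing the proof.
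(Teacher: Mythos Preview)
This theorem is not proved in the present paper; it is quoted verbatim from \cite[Theorem~5.7]{LaLondeMilan} in Section~7 and only commented upon. So there is no in-paper proof against which to compare your proposal.

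That said, your outline correctly reconstructs the strategy of \cite{LaLondeMilan}, as can be inferred from the present paper itself: the introduction states that both uniqueness theorems of \cite{LaLondeMilan} were obtained via the generalized uniqueness theorem of \cite{BNRSW}, and the paper invokes \cite[Corollary~5.5]{LaLondeMilan} for the fact that when $S$ is fundamental with $0$-disjunctive semilattice one has $\iso(\Gt(S))^\circ = \Gt(S)^{(0)}$. Theorem~\ref{thm_tightuniqueness} is the non-fundamental version, and it is indeed proved in \cite{LaLondeMilan} by identifying $\Gt(Z)$ with $\iso(\Gt(S))^\circ$ and then applying \cite{BNRSW}, exactly as you propose. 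One caution on your sketch of the hard inclusion: the step ``produce $f \in F$ with $f \leq e$ and $fe_i = 0$ for all $i$'' is what Lemma~\ref{lem:emptybasic} of the present paper supplies under the \emph{trapping condition}, which is not among the hypotheses of Theorem~\ref{thm_tightuniqueness}. The $0$-disjunctive assumption alone does not directly furnish such an $f$, and \cite{LaLondeMilan} uses it somewhat differently to pass from ``$\beta_s$ fixes a neighborhood of tight filters'' to ``$se$ centralizes $E$ for some $e \in F$''. You have correctly flagged this as the main obstacle and correctly located where $0$-disjunctivity enters; just be aware that the reduction you sketch would need to be reworked without the trapping condition.
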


This theorem is particularly relevant to the current paper, since the 0-disjunctive hypothesis is intimately connected to the congruence $\lr$. Indeed, we saw that if $S$ is Hausdorff, then $\Gt(S) \cong \Gt(S/\!\!\lr)$ by Theorem \ref{thm:groupoiddoublearrow} and the semilattice of $S/\!\!\lr$ is 0-disjunctive by Proposition \ref{prop:0disjunctive}. 
Thus the hypothesis that $S$ is $0$-disjunctive is not too onerous since, if it fails, one can replace $S$ with $S/\!\!\lr$ in the theorem.

\bibliographystyle{amsplain}
\bibliography{SemigroupBib.bib}

\providecommand{\bysame}{\leavevmode\hbox to3em{\hrulefill}\thinspace}
\providecommand{\MR}{\relax\ifhmode\unskip\space\fi MR }
\providecommand{\MRhref}[2]{%
  \href{http://www.ams.org/mathscinet-getitem?mr=#1}{#2}
}
\providecommand{\href}[2]{#2}
\begin{thebibliography}{10}

\bibitem{Baird}
G.R. Baird, \emph{Congruence-free inverse semigroups with zero}, J. Austral.
  Math. Soc. \textbf{20} (1975), no.~1, 110--114.

\bibitem{BCFS}
Jonathan Brown, Lisa~Orloff Clark, Cynthia Farthing, and Aidan Sims,
  \emph{Simplicity of algebras associated to \'etale groupoids}, Semigroup
  Forum \textbf{88} (2014), no.~2, 433--452.

\bibitem{BNRSW}
Jonathan~H. Brown, Gabriel Nagy, Sarah Reznikoff, Aidan Sims, and Dana~P.
  Williams, \emph{Cartan subalgebras in ${C}^*$-algebras of {H}ausdorff
  \'{e}tale groupoids}, Integral Equations Operator Theory \textbf{85} (2016),
  no.~1, 109--126, arXiv:1503.03521v2.

\bibitem{BussExelMeyer}
Alcides Buss, Ruy Exel, and Ralf Meyer, \emph{Inverse semigroup actions as
  groupoid actions}, Semigroup Forum \textbf{85} (2012), no.~2, 227--243.

\bibitem{CE-MHS}
Lisa~Orloff Clark, Cain Edie-Michell, Astrid an~Huef, and Aidan Sims,
  \emph{Ideals of {S}teinberg algebras of strongly effective groupoids, with
  applications to {L}eavitt path algebras}, arXiv:1601.07238, January 2016.

\bibitem{ExelTight}
Ruy Exel, \emph{Inverse semigroups and combinatorial {$C^\ast$}-algebras},
  Bull. Braz. Math. Soc. (N.S.) \textbf{39} (2008), no.~2, 191--313.

\bibitem{exel}
Ruy Exel, \emph{Tight representations of semilattices and inverse semigroups},
  Semigroup Forum \textbf{79} (2009), no.~1, 159--182.

\bibitem{ExelPardo}
Ruy Exel and Enrique Pardo, \emph{The tight groupoid of an inverse semigroup},
  Semigroup Forum \textbf{92} (2016), no.~1, 274--303.

\bibitem{ExelPardoGraph}
\bysame, \emph{Self-similar graphs, a unified treatment of {K}atsura and
  {N}ekrashevych ${C}^*$-algebras}, Adv. Math. \textbf{306} (2017), 1046--1129,
  arXiv:1409.1107.

\bibitem{KPR}
Alex Kumjian, David Pask, and Iain Raeburn, \emph{{C}untz-{K}rieger algebras of
  directed graphs}, Pacific J. Math. \textbf{184} (1998), 161--174.

\bibitem{LaLondeMilan}
Scott~M. LaLonde and David Milan, \emph{Amenability and uniqueness for
  groupoids associated with inverse semigroups}, Semigroup Forum \textbf{2}
  (2016).

\bibitem{LawsonCompactable}
Mark~V. Lawson, \emph{Compactable semilattices}, Semigroup Forum \textbf{81}
  (2010), no.~1, 187--199.

\bibitem{LawsonStone}
\bysame, \emph{Non-commutative stone duality: inverse semigroups, topological
  groupoids and ${C^*}$-algebras}, Internat. J. Algebra Comput. \textbf{22}
  (2012), no.~6, 47 pp.

\bibitem{LawsonGraph}
Mark~V. Lawson and David~G. Jones, \emph{Graph inverse semigroups: their
  characterization and completion}, J. Algebra (2014), 444--473.

\bibitem{lawson-lenz}
Mark~V. Lawson and Daniel~H. Lenz, \emph{Pseudogroups and their \'{e}tale
  groupoids}, Adv. Math. \textbf{244} (2013), 117--170.

\bibitem{lenz}
Daniel~H. Lenz, \emph{On an order-based construction of a topological groupoid
  from an inverse semigroup}, Proc. Edinb. Math. Soc. (2) \textbf{51} (2008),
  no.~2, 387--406.

\bibitem{MesyanMitchell}
Zachary Mesyan and J.D. Mitchell, \emph{The structure of a graph inverse
  semigroup}, Semigroup Forum \textbf{93} (2016), no.~1, 111--130.

\bibitem{MilanSteinberg}
David Milan and Benjamin Steinberg, \emph{On inverse semigroup {$C^*$}-algebras
  and crossed products}, Groups Geom. Dyn. \textbf{2} (2014), no.~485--512.

\bibitem{PetrichBook}
Mario Petrich, \emph{Inverse semigroups}, John Wiley \& Sons, Inc., New York,
  1984.

\bibitem{Raeburn}
Iain Raeburn (ed.), \emph{Graph algebras}, CBMS Regional Conference Series in
  Mathematics, vol. 103, Amer. Math. Soc., 2005.

\bibitem{SteinbergSimple}
Benjamin Steinberg, \emph{Simplicity, primitivity and semiprimitivity of
  \'{e}tale groupoid algebras with applications to inverse semigroup algebras},
  Journal of Pure and Applied Algebra \textbf{220} (2016), no.~3, 1035--1054.

\end{thebibliography}
\end{document}